\newtheorem{theorem}{Theorem}[section]
\newtheorem{lemma}[theorem]{Lemma}
\newtheorem{corollary}[theorem]{Corollary}
\theoremstyle{definition}
\newtheorem{rem}[theorem]{Remark}
\newtheorem*{notation}{Notation}
\newcommand{\N}{\Bbb N}
\newcommand{\R}{\Bbb R}
\def\Id{\mathbf{Id}}
\def\eps{\varepsilon}
\def\e{\mathbf{e}}
\def\SO{\mathrm{SO}}
\def\PW{P\mbox{-}W}
\def\weakly{\rightharpoonup}
\def\weaklystar{\stackrel{*}{\rightharpoonup}}
\def\dist{\operatorname{dist}}
\begin{document}

\begin{center}
\begin{Large}
{\bf {A Griffith-Euler-Bernoulli theory for thin \\ brittle beams derived from nonlinear models \\ in variational fracture mechanics \\ }}
\end{Large}
\end{center}

\begin{center}
\begin{large}
Bernd Schmidt\footnote{Universit{\"a}t Augsburg, Institut f{\"u}r Mathematik, 
Universit{\"a}tsstr.\ 14, 86159 Augsburg, Germany. {\tt bernd.schmidt@math.uni-augsburg.de}}\\
\end{large}
\end{center}

\begin{center}
\today
\end{center}
\bigskip

\begin{abstract}
We study a planar thin brittle beam subject to elastic deformations and cracks described in terms of a nonlinear Griffith energy functional acting on $SBV$ deformations of the beam. In particular we consider the case in which elastic bulk contributions due to finite bending of the beam are comparable to the surface energy which is necessary to completely break the beam into several large pieces. In the limit of vanishing aspect ratio we rigorously derive an effective Griffith-Euler-Bernoulli functional which acts on piecewise $W^{2,2}$ regular curves representing the midline of the beam. The elastic part of this functional is the classical Euler-Bernoulli functional for thin beams in the bending dominated regime in terms of the curve's curvature. In addition there also emerges a fracture term proportional to the number of discontinuities of the curve and its first derivative. 
\end{abstract}
\bigskip

\begin{small}

\noindent{\bf Keywords.} Thin structures, Euler-Bernoulli beam theory, dimension reduction, brittle materials, variational fracture, quantitative piecewise geometric rigidity, Gamma-convergence. 
\medskip

\noindent{\bf Mathematics Subject Classification.} 
74R10, %brittle fracture 
74K10, %beams 
74B20, %nonlinear elasticity
49J45  %methods involving semicontinuity and convergence; relaxation
\end{small}

\tableofcontents

%--------------------------------------------------------------------------
%--------------------------------------------------------------------------
\section{Introduction}\label{sec:Intro}
%--------------------------------------------------------------------------

The derivation of effective theories for thin structures such as beams, rods, plates and shells is a classical problem in continuum mechanics. In a fundamental contribution Euler proposed an elastic energy functional for a planar thin beam undergoing pure bending so that its midline remains unstretched, in which the effective local energy contributions are proportional to the squared curvature of the deformed midline, cf.\ \cite{Euler}. Basic results in formulating adequate dimensionally reduced theories for three-dimensional elastic objects go back to the work of Kirchhoff and von K{\'a}rm{\'a}n \cite{Kirchhoff,vonKarman}, cf.\ also \cite{Love,Ciarlet-ii,Ciarlet-iii}. First rigorous results, however, deriving effective energy functionals for elastic thin films have been obtained only recently, cf.\ \cite{anzbalper, LeDretRaoult-i, LeDretRaoult-ii, FJM:02, FJM:06}. 

In (nonlinear) elasticity theory, a (hyper-)elastic specimen occupying a region $\Omega \subset \R^d$ ($d = 2,3$) and subject to a deformation $v : \Omega \to \R^d$ is described in terms of its stored energy 
$$ \int_{\Omega} W(\nabla v(x)) \, dx, $$ 
where the stored energy function $W$ acts on the deformation gradient $\nabla v$ measuring the local strain of $v$. For thin structures $\Omega = \Omega_h$ of small aspect ratio $h \ll 1$ such as beams of height $h \ll 1$, three-dimensional rods of thickness $h \ll 1$ or plates of height $h \ll 1$ (with the other dimensions of order $1$) the basic task is to obtain dimensionally reduced energy functionals acting on suitable strain quantities for the limiting one- or two-dimensional objects. In order to rigorously relate these theories to the parent nonlinear bulk elasticity model, one aims at establishing a variational convergence result in the sense of $\Gamma$-convergence, which in particular guarantees that sequences of (almost) minimizers subject to suitable external forces converge to the solution of the effective limiting minimum problem, cf.\ e.g.\ \cite{dalmaso}. Notably the seminal articles \cite{FJM:02, FJM:06}, in which a whole hierarchy (in terms of possible energy scalings) of plate theories is derived from three-dimensional nonlinear elasticity theory, have triggered a still ongoing activity in extending these results to various different settings including effective theories for rods \cite{fjmmii}, shells \cite{fjmm}, atomistic films \cite{Schmidt:06}, heterogeneous layers \cite{Schmidt:07}, incompressible plates \cite{ContiDolzmann:09} and plates with pre-strain \cite{LewickaMahadevanPakzad}. 

When examining thin structures made of brittle materials it is indispensable to consider models beyond the purely elastic regime which, in particular, include the possibility that the body undergoes fracture. Motivated by the pioneering work of Griffith \cite{Griffith:1921}, in which the formation of crack is viewed as the result of a competition between the surface energy cost and the reduction of bulk energy during an infinitesimal increase of the crack set, Francfort and Marigo \cite{Francfort-Marigo:1998} have introduced energy functionals comprising both bulk and surface contributions which lend themselves to a variational analysis. In contrast to the elastic case the deformation $v : \Omega \to \R^d$ may now contain jump discontinuities along a `crack set' of codimension one. In its basic form, when the crack energy is homogeneous, independent of the crack opening and isotropic, a Griffith functional is given by  
\begin{align}\label{eq:Griffith-bulk} 
  \int_{\Omega \setminus J_{v}} W(\nabla v(x)) \, dx + \beta {\cal H}^{d-1} (J_{v}), 
\end{align}
where $\nabla v$ is the bulk deformation gradient, $J_v$ the crack set and $\beta > 0$ a material constant. The crack energy is then simply proportional to the Hausdorff area of $J_v$. 

While of considerable interest both from a theoretical and a practical point of view, it seems that most of the possible energy scaling regimes for thin brittle specimens are yet poorly understood. The problem here is to consider nonlinear Griffith functionals for thin reference configurations $\Omega = \Omega_h$ and to develop dimension reduction techniques which allow for the derivation of a suitable `Griffith plate theory' in the limit $h \to 0$, where the resulting elastic plate theory is augmented with an effective surface term. Notable contributions to this aim are the works of Braides and Fonseca \cite{BraidesFonseca:01} on a $\Gamma$-convergence result and of Babadjian \cite{Babadjian} on a $\Gamma$-convergence and asymptotic quasistatic evolution result in the membrane energy regime, i.e., for deformations with finite energy per unit volume. 

Yet, brittle materials that respond elastically only to very small strains and do not have a significant plastic regime but rather develop cracks already for moderate strains are naturally investigated in the small displacement regime by linearized Griffith functionals whose elastic bulk contribution is the corresponding energy functional of infinitesimal elasticity, cf.\ \cite{Bourdin-Francfort-Marigo:2008}. However, in the presence of cracks a body can break apart into several pieces each of which may afterwards undergo a different rigid motion at no extra energy cost, so that in general it is not possible to linearize a deformation around a single rigid motion. To overcome the serious drawback of linearized functionals not being (nonlinearly) frame invariant, one is therefore led to consider nonlinear Griffith models in which a suitable scaling parameter to either the bulk or the surface part is introduced in order to relate the stiffness and toughness parameters of the material in such a way that the elastic bulk energy for small displacements is of the same order of magnitude as the surface part. In \eqref{eq:Griffith-bulk} this amounts to viewing $\beta$ a small parameter and considering deformations whose Green-St.\ Venant strain tensor is of the order $\sqrt{\beta}$. The problem of deriving linearized theories (around a piecewise rigid motion) from such rescaled nonlinear Griffith functionals has only recently been resolved in a planar setting by Friedrich in \cite{Friedrich:15b}. (See also \cite{NegriToader:2013} for a similar problem within a restricted class of admissible crack sets.) We also remark that this scaling parameter in atomistic systems can be related to the typical (small) interatomic distances, see \cite{Braides-Lew-Ortiz:06, FriedrichSchmidt:14, FriedrichSchmidt:15a, FriedrichSchmidt:15c}. For thin films the inadequacy of a linearized parent model may even occur in the absence of cracks as these objects can be largely deformed by bending with only small energetic cost. As such models already contain the aspect ratio $h$ as a small scaling parameter, this leads to functionals in which $\beta$ is considered a second small parameter. 

In this article we focus on the case of a two-dimensional thin strip $\Omega_h = (0, L) \times (-\tfrac{h}{2}, \tfrac{h}{2})$ subject to a deformation $v : \Omega_h \to \R^2$ of energy 
$$ \int_{\Omega_h} W(\nabla v) \, dx + \beta_h {\cal H}^1 (J_{v}). $$ 
Being interested in beams whose unfractured regions are deformed within the bending dominated regime, we also suppose that $\beta_h \sim h^2$. Our main result in Theorem \ref{theo:Gamma-convergence} is that, under suitable assumptions, in the limit $h \to 0$ these functionals, divided by $h^3$, $\Gamma$-converge to an effective `Griffith-Euler-Bernoulli' functional of the form 
$$ \frac{\alpha}{24} \int_0^L | \kappa(t) |^2 \, dt + \beta \# (J_{\bar{y}} \cup J_{\bar{y}'}), $$ 
where $\bar{y} : (0,L) \to \R^2$ is a piecewise $W^{2,2}$ regular curve (representing the midline of the beam), $\kappa$ is the curvature of $\bar{y}$ and $J_{\bar{y}} \cup J_{\bar{y}'}$ is the set of discontinuities of $\bar{y}$ and $\bar{y}'$. Moreover, $\alpha$ is the Euler-Bernoulli constant and $\beta$ the effective energy per crack. This is complemented with a compactness result on finite energy sequences in Theorem \ref{theo:compactness}. In Corollaries \ref{cor:Gamma-compactness-forces} and \ref{cor:Gamma-compactness-forces-bv} we also show that body forces and boundary conditions can be included in our analysis. As a direct consequence we obtain a convergence result for (almost) minimizers in Corollary \ref{cor:conv-almost-min}. 

In all of the aforementioned results on plate theories with energy scalings beyond the membrane energy regime, at the core of their derivation lies a quantitative geometric rigidity theorem which allows for controlling the deviation of a deformation from a rigid motion in terms its energy. For Sobolev functions such a result has been proven by Friesecke, James and M{\"u}ller in \cite{FJM:02}. In \cite{FJM:02} and \cite{FJM:06} these authors then show how this estimate allows for the derivation of effective plate theories from three-dimensional nonlinear elasticity. In fact, many of the subsequent variations in different settings have either used a functional in \cite{FJM:02,FJM:06} as a comparison functional or followed the same strategy to use the geometric rigidity theorem of \cite{FJM:02} in order to estimate the local deviations from approximating rigid motions. Our approach to brittle thin beams follows the same general strategy: We begin by covering the beam with $O(h^{-1})$ many small rectangles, approximate the deformation by a rigid motion and then compare these rigid motions on neighboring rectangles. 

At the core of our derivation now lies a novel quantitative piecewise geometric rigidity theorem for $SBV$ functions in two dimensions, recently obtained by Friedrich and the author in \cite{FriedrichSchmidt:15b}, cf.\ Theorem \ref{theo:quant-rig}. Due to the possible presence of jump discontinuities this theorem, however, is formulated in a considerably more complex way than the corresponding rigidity result for Sobolev functions. For a given deformation, instead of a single rotation there is an underlying Caccioppoli partition of rotations and only slightly modified versions of the deformations are proved to be close to the corresponding piecewise rigid motions. Moreover, while the piecewise linearized strain can be controlled in terms of the deformation energy, a similar bound on the full gradient is not available. (This is related to the fact that there is no analogue of Korn's inequality in $SBV$.) These notable differences result in a number of technical challenges, including the following.  

%We begin by covering the beam with $O(h^{-1})$ many small, slightly overlapping rectangles. If the crack length within such a rectangle is (even slightly) too small to completely break it vertically, the quantitative geometric rigidity estimate Theorem \ref{theo:quant-rig} is applied in order to approximate the deformation by a rigid motion. Similarly as in \cite{FJM:02} we then compare these rigid motions on overlapping rectangles. 

1.\ The estimates in Theorem \ref{theo:quant-rig}, which are to be applied on many small rectangles, only apply to a modified deformation, which might considerably deviate from the original one on a set whose smallness is quantified by a small parameter $\rho$, while the constants in these estimates in turn do depend on $\rho$.

2.\ The modified deformations on different rectangles have to be joined together before taking the limit $h \to 0$ in order to obtain the limiting strain of the thin beam. Neither a simple piecewise constant interpolation nor mollified versions thereof are adequate as the former would introduce by far too much artificial crack and the latter will naturally give too weak estimates in the presence of cracks. A subtle point here is that our interpolating and taking gradients do not commute and considerable efforts have to be made to estimate the difference of the interpolated gradients and the gradient of the interpolation.

3.\ An essential step is to show that the limiting strain is asymptotically linear in the vertical small beam direction. To this end, similarly as in \cite{FJM:02} we consider difference quotients in this direction. However, in our setting of $SBV$ functions, the arguments in \cite{FJM:02} do not apply to determining their limiting behavior due the the possible presence of a singular part of the derivative which does not store elastic energy. We propose a different method here by applying an SBV closure argument to an auxiliary function which arises from flattening the beam's deformation and a suitable rescaling of both its image and preimage.

An aspect of our derivation which appears to be interesting also from a physical point of view is that fracture in a limiting deformation can occur only at those points $t \in (0, L)$ for which there are approximating deformations at finite $h$ containing a crack set of length $h$ concentrated in a region converging to $t$ whose $x_1$-projection is much smaller than $h$. Smaller cracks which are separated at least a distance comparable to $h$, on the other hand, are healed in the limit $h \to 0$, cf.\ Remark \ref{rem:crack-localization} for a precise statement. 

We finally remark that the main reason for our restricting to planar beams is that the basic Theorem \ref{theo:quant-rig} is only available in two dimensions. Although we have taken advantage of the possibility to simplify various arguments by exploiting the planar set-up, we believe that our analysis, in particular the aforementioned technical considerations, will essentially allow for the derivation of a Griffith plate theory, provided Theorem \ref{theo:quant-rig} can be extended to three dimensions. 
\pagebreak[3]

\begin{notation} 
For vectors $a \in \R^m$, $b \in \R^d$ we write $a \otimes b$ for the matrix $a b^T \in \R^{m \times d}$. If $a = (a_1, a_2)^T \in \R^2$ we set $a^{\perp} = (-a_2, a_1)^T$. The standard unit vectors in $\R^2$ are $\e_1 = (1,0)^T$ and $\e_2 = (0,1)^T$. If $a, b \in \R^2$ we write $(a \mid b) = a \otimes \e_1 + b \otimes \e_2$. By $\R^{n \times n}_{\rm sym}$ and $\R^{n \times n}_{\rm skew}$ we denote the space of symmetric and the space of skew symmetric $n \times n$ square matrices respectively. The symmetric part of $X \in \R^{n \times n}$ is $e(X) = \frac{X+X^T}{2}$. 

In the proofs in Sections \ref{sec:compactness} and \ref{sec:Gamma} we will encounter the parameters $h \searrow 0, \rho \searrow 0, \lambda \nearrow 1$ and $n \nearrow \infty$ (converging in this order). Generic constants which are independent of $h$ but may depend on $n$ are denoted $C, \hat{C}$ with the convention that $C$ is independent of $\rho$ and $\lambda$ while $\hat{C}$ might depend on $\rho, \lambda$. 
\end{notation}

%--------------------------------------------------------------------------
%--------------------------------------------------------------------------
\section{Main results}
%--------------------------------------------------------------------------

We consider a thin brittle beam whose reference configuration occupies the region $\Omega_h = (0, L) \times (-\tfrac{h}{2}, \tfrac{h}{2})$ in $\R^2$. Here the length $L>0$ of the strip is supposed to be of order $1$ while the beam thickness $h>0$ is assumed to be much smaller than $1$. We fix a (large) constant $M > 0$ and consider deformations $v \in SBV(\Omega_h, \R^2)$ of the beam with $\max \{ \| v \|_{L^{\infty}}, \| \nabla v \|_{L^{\infty}} \} \le M$. (See Section \ref{sec:Preliminaries} for the definition and some essential properties of the space $SBV$.) The constant $M$, assumed to be much larger than $1$ and $L$, effectively confines the specimen to a large box and also forbids (unphysically) large elastic strains. (A more thorough discussion on the restrictions caused by $M$ is given in Remark \ref{rem:main-res}.\ref{rem:main-res-M}.) We assume that the energy $E^h(v)$ of such a deformation consists of two parts: the stored elastic energy which is given as the integral of a stored energy function $W$ evaluated at the absolutely continuous part $\nabla v$ of $D v$ and an isotropic crack energy which is proportional to the length ${\cal H}^1(J_v)$ of the jump set $J_v$ of $v$, i.e.,  
\begin{align*}
  E^h(v) 
  = \int_{\Omega_h} W(\nabla v) \, dx + \beta_h {\cal H}^1 (J_{v}).  
\end{align*} 

Our main assumptions on $W : \R^{2 \times 2} \to [0, \infty)$ are the following: 
\begin{itemize}
\item[(i)] Regularity: $W$ is continuous on $\R^{2 \times 2}$ and $C^3$ in a neighborhood of $\SO(2)$.  
\item[(ii)] Frame indifference: $W(RX) = W(X)$ for all $R \in \SO(2)$ and $X \in \R^{2 \times 2}$. 
\item[(iii)] Non-degeneracy: $W(X) = 0$ if an only if $X \in \SO(2)$ and there is a constant $c > 0$ such that, for all $X \in \R^{2 \times 2}$, 
\begin{align}\label{eq:W-lower-bound} 
  W(X) \ge c \dist^2(X, \SO(2)). 
\end{align}
\end{itemize}

In the absence of cracks, i.e., in the purely elastic setting, bending dominated deformations will store an elastic bending energy scaling with $h^3$, cf.\ \cite{Euler,FJM:02}. On the other hand, the crack length which is necessary to completely break $\Omega_h$ vertically into several pieces is of the order $h$. In order to obtain an energy functional which models brittle beams undergoing fracture within the bending dominated regime and thus, as discussed in the introduction above, account for both energy contributions on the same scale, we will henceforth assume that $\beta_h = \beta h^2$ for a constant $\beta > 0$. 

In order to compare deformations $v$ on $\Omega_h$ for different $h$ we rescale by setting $\Omega = \Omega_1$, $y(x_1, x_2) = v(x_1, h x_2)$ so that $(\partial_1 y \mid h^{-1} \partial_2 y)(x_1, x_2) = \nabla v(x_1, h x_2)$ and abbreviate $(\partial_1 y \mid h^{-1} \partial_2 y) = \nabla_h y$. (We write $(a \mid b)$ for the $2 \times 2$ with columns $a, b \in \R^2$.) We also divide $E^h$ by $h^3$ so as to obtain an energy functional $I^h$ of order $1$ so that 
\begin{align*}
  E^h(v) 
  &= \int_{\Omega_h} W(\nabla v) \, dx + \beta h^2 {\cal H}^1 (J_{v}) \\ 
  &= h \int_{\Omega} W(\nabla_h y) \, dx + \beta h^2 \int_{\Omega \cap J_{y}} |(h \nu_1(y), \nu_2(y))| \, d{\cal H}^1 
   = h^3 I^h(y) 
\end{align*} 
with $\nu(y)$ denoting the crack normal of $J_{y}$, where $I^h : SBV(\Omega; \R^2) \to [0, +\infty]$ is given by 
\begin{align*}
  I^h(y) 
  = \begin{cases} 
       h^{-2} \int_{\Omega} W(\nabla_h y) \, dx + \beta \int_{\Omega \cap J_{y}} |(\nu_1(y), h^{-1} \nu_2(y))| \, d{\cal H}^1 & \mbox{for } y \in {\cal A}^h, \\ 
       + \infty & \mbox{otherwise}
    \end{cases}  
\end{align*} 
for ${\cal A}^h = \big \{ y \in SBV(\Omega; \R^2) : \max \{ \| y \|_{L^{\infty}}, \| \nabla_h y \|_{L^{\infty}} \} \le M \big\}$. 

Let ${\cal Q}$ be the Hessian of $W$ at $\Id$. Note that by the assumptions on $W$, the quadratic form ${\cal Q}$ is positive definite on the space $\R^{2 \times 2}_{\rm sym}$ of symmetric matrices and vanishes on the space $\R^{2 \times 2}_{\rm skew}$ of skew symmetric matrices. We define a relaxed elastic constant by 
\begin{align}\label{eq:Q-def}
  \alpha 
  = \min_{\gamma \in \R^2} {\cal Q} (\mathbf{e}_1 \otimes \mathbf{e}_1 +  \gamma \otimes \mathbf{e}_2) 
  = \min_{\gamma \in \R^2} {\cal Q} \Big( \mathbf{e}_1 \otimes \mathbf{e}_1 + \frac{\gamma \otimes \mathbf{e}_2 + \mathbf{e}_2 \otimes \gamma}{2} \Big), 
\end{align} 
where $\mathbf{e}_1 = (1,0)^T$ and $\mathbf{e}_2 = (0,1)^T$ (and $a \otimes b = a b^T$ for vectors $a, b$). 

Our main results are the following theorems on $\Gamma$-convergence and compactness for $I^h$ as $h \to 0$. 
The limiting deformations $y$ turn out to be independent of the vertical component $x_2$ and the limiting functional $I^0 : SBV(\Omega; \R^2) \to [0, +\infty]$ is a `Griffith-Euler-Bernoulli energy functional' given by 
\begin{align*}
  I^0(y) 
  = \begin{cases} 
       \frac{\alpha}{24} \int_0^L | \kappa(t) |^2 \, dt + \beta \# (J_{\bar{y}} \cup J_{\bar{y}'}) & \mbox{for } y \in {\cal A},~ y(x) = \bar{y}(x_1) \mbox{ a.e., } \\ 
       + \infty & \mbox{otherwise.}
    \end{cases}
\end{align*}
Here the set of admissible limiting deformations is 
\begin{align*}
  {\cal A} 
  &= \big\{ y \in SBV(\Omega; \R^2) : y(x) = \bar{y}(x_1) \mbox{ for a.e.\ } x \in \Omega \\ 
  &\qquad\qquad\qquad \mbox{ with } \bar{y} \in \PW^{2,2}((0, L); \R^2),~ |\bar{y}| \le M \mbox{ and } |\bar{y}'| = 1 \mbox{ a.e.} \big\} 
\end{align*} 
and $\kappa(t) = \bar{y}'' \cdot (\bar{y}')^{\perp}$ is the curvature of the curve $t \mapsto \bar{y}(t)$; see Section \ref{sec:Preliminaries} for the definition of the piecewise Sobolev space $\PW^{2,2}$. 
(By abuse of notation, functions $f$ defined on (a part of) $\Omega$ that only depend on $x_1$ will not be renamed in the sequel so that we often simply write $f(x_1)$ instead of $f(x)$ and $f'$ instead of $\partial_1 f$.)

\begin{theorem}[Gamma-convergence]\label{theo:Gamma-convergence} 
The functionals $I^h$ $\Gamma$-converge to $I^0$ on $SBV(\Omega; \R^2)$ with respect to the strong $L^1$-topology as $h \to 0$, i.e., 
\begin{itemize}
\item[(i)] $\liminf$ inequality: whenever $y^h \to y$ in $L^1$ for $y^h, y \in SBV(\Omega; \R^2)$, 
$$ \liminf_{h \to 0} I^h(y^h) \ge I^0(y); $$ 

\item[(ii)] recovery sequences: for every $y \in SBV(\Omega; \R^2)$ there exist $y^h  \in SBV(\Omega; \R^2)$ with $y^h \to y$ in $L^1$ and 
$$ \lim_{h \to 0} I^h(y^h) = I^0(y). $$
\end{itemize}
\end{theorem}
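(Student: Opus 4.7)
The plan is to establish both halves of the statement using Theorem~\ref{theo:quant-rig} as the main technical tool together with a Kirchhoff-type ansatz for the upper bound. For the liminf inequality I may assume $\liminf_h I^h(y^h) < +\infty$ and, invoking the compactness result Theorem~\ref{theo:compactness}, that $y \in \mathcal{A}$, i.e.\ $y(x) = \bar{y}(x_1)$ with $\bar{y} \in \PW^{2,2}((0,L); \R^2)$ and $|\bar{y}'|=1$ almost everywhere. The task then splits into identifying the bending and fracture contributions of $I^0$ as separate asymptotic lower bounds.

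For the liminf, I would cover $\Omega$ by the $O(h^{-1})$ vertical strips $Q_i^h = ((i-1)h, ih) \times (-1/2, 1/2)$ and apply the piecewise geometric rigidity theorem on each $Q_i^h$ to $y^h$, obtaining modified deformations $\tilde{y}^h$ together with Caccioppoli partitions carrying rotations. On each strip I select the dominant rotation $R_i^h \in \SO(2)$ (the one on the piece of largest area); the discarded pieces carry perimeter controlled by the surface energy, so after the rigidity step only $O(1)$ transitions between neighbouring strips can survive as $h \to 0$, and these accumulate precisely at $J_{\bar{y}} \cup J_{\bar{y}'}$. Summing the quantitative strain bounds over $i$ gives control on the discrete increments $R_{i+1}^h - R_i^h$ by the bulk energy; interpolating the $R_i^h$ into a piecewise affine $SO(2)$-valued field then yields a limit rotation $R(x_1) = (\bar{y}'(x_1) \mid (\bar{y}')^\perp(x_1))$. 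A careful joining of the $\tilde{y}^h$ across strips (neither piecewise-constant, which would inject spurious crack, nor mollified, which would smear genuine jumps) is required to recover $\bar{y}$ itself and to identify the crack set; the surface term in $I^0$ then descends simply by counting the surviving jumps, each carrying asymptotic rescaled cost $\beta$.

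The main obstacle, as flagged in the introduction, is step three: identifying the limiting rescaled strain
$$ G^h = \frac{(R^h)^T \nabla_h \tilde{y}^h - \Id}{h} $$
and showing that its pointwise limit $G(x_1, x_2)$ is affine in $x_2$. The Sobolev argument of \cite{FJM:02} relies on vertical difference quotients, which in $SBV$ are obstructed by the singular part of $D\tilde{y}^h$. I would follow the route suggested by the introduction: subtract the piecewise rigid motion, flatten the deformation and rescale both image and preimage to obtain an auxiliary sequence to which an $SBV$ closure argument can be applied, forcing the limit to lie in a space in which the classical argument produces the affine $x_2$-dependence. Once this is in hand, the entry $G(x_1, x_2)_{11} = -x_2 \kappa(x_1)$ is dictated by the bending geometry, and minimising the integrand of $\int \mathcal{Q}(G)\, dx$ over admissible transverse components $\gamma(x_1)$ produces exactly the constant $\alpha$ of \eqref{eq:Q-def}; combined with Jensen in $x_2$ and standard convex lower semicontinuity in $x_1$ this yields the bending lower bound $\tfrac{\alpha}{24}\int_0^L |\kappa|^2 \, dt$.

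For the recovery sequence, a density argument in $\PW^{2,2}$ reduces to $\bar{y}$ that is smooth away from a finite set $S = \{t_1, \ldots, t_N\} \supset J_{\bar{y}} \cup J_{\bar{y}'}$. On each smooth arc I use the Kirchhoff ansatz
$$ y^h(x_1, x_2) = \bar{y}(x_1) + h x_2 (\bar{y}')^\perp(x_1) + h^2 x_2 \gamma(x_1) + \tfrac{h^2 x_2^2}{2} d(x_1), $$
with $\gamma(x_1)$ the optimal transverse correction achieving the minimum in \eqref{eq:Q-def} for the linearised strain $-x_2 \kappa(x_1)\, \mathbf{e}_1 \otimes \mathbf{e}_1$; a Taylor expansion of $W$ around $\Id$ then gives $h^{-2}\int W(\nabla_h y^h)\, dx \to \tfrac{\alpha}{24}\int_0^L |\kappa|^2 \, dt$. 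At each $t_k \in S$ I insert a vertical crack $\{t_k\}\times(-1/2, 1/2)$, contributing exactly $\beta \int |(\nu_1, h^{-1}\nu_2)|\, d\mathcal{H}^1 = \beta$ per crack in the rescaled surface term; a boundary layer of width $O(h)$ around each $t_k$ interpolates the neighbouring Kirchhoff pieces at negligible extra elastic cost, and $L^1$-convergence $y^h \to y$ follows by inspection. Thus the hard part is genuinely the affine-in-$x_2$ identification within the liminf argument; the upper bound follows the classical Euler-Bernoulli recipe augmented by finitely many well-placed crack segments.
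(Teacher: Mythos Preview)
Your plan follows essentially the same route as the paper: cover by thin rectangles, apply Theorem~\ref{theo:quant-rig} locally, pick the dominant rotation, interpolate, and use an $SBV$ closure argument on a rescaled auxiliary function to force the affine-in-$x_2$ structure of the limiting strain; the recovery sequence is the Kirchhoff ansatz with vertical cracks. A few points, however, deserve correction or sharpening.

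First, your strips $Q_i^h=((i-1)h,ih)\times(-\tfrac12,\tfrac12)$ are \emph{non-overlapping}. The paper uses overlapping rectangles, and this is not cosmetic: the rigidity theorem only controls the \emph{symmetric} part of $R_{a,1}^T\nabla\hat u_a$, whereas the difference $R_{a,1}-R_{b,1}$ of two rotations is, to leading order, skew. The paper therefore compares $R_{a,1}$ and $R_{b,1}$ by applying Theorem~\ref{theo:quant-rig} a second time on the \emph{union} $Q_a\cup Q_b$ (Lemma~\ref{lemma:RaRb-est}); with disjoint strips you have no common domain on which to make this comparison. Relatedly, the paper's interpolation of the $\bar w_a$ via a partition of unity only yields $L^p$ bounds with $p<2$ on the non-gradient part $F^{(1)}$ (Lemma~\ref{lemma:F1-global-est}), which forces a truncation trick ($f_\delta$) and an averaging over shifted coverings at the very end of the liminf proof; your outline does not anticipate this $L^p$ versus $L^2$ mismatch.

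For the recovery sequence, two simplifications: the extra term $h^2x_2\gamma(x_1)$ in your ansatz is not needed (the optimal second column of the strain is linear in $x_2$, so it is produced entirely by the quadratic term $\tfrac{h^2x_2^2}{2}d(x_1)$ with $d=R\gamma(-\kappa)$, as in the paper); and no boundary layer near the crack points is required at all, since $y^h\in SBV$ is allowed to jump and the Kirchhoff pieces simply meet along $\{t_k\}\times(-\tfrac12,\tfrac12)$, contributing exactly $\beta$ each to the surface term.
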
 
We will prove Theorem \ref{theo:Gamma-convergence} in Section \ref{sec:Gamma}. Here the choice of the $L^1$ topology has been made for definiteness. In fact, $\Gamma$-convergence also holds for other choices as will be detailed below. 

This $\Gamma$-convergence theorem is complemented by the following strong compactness result, proved in Section \ref{sec:compactness}. 
\begin{theorem}[compactness]\label{theo:compactness}
Suppose $y^h \in {\cal A}^h$ satisfy 
\begin{align*}
  I^h(y^h) \le C 
\end{align*}
for a constant $C$ independent of $h$. Then there exists a subsequence (not relabeled) verifying the following assertions as $h \to 0$. 
\begin{itemize}
\item[(i)] $y^h \to y$ in $L^1$ for a limiting deformation $y \in {\cal A}$. 
\item[(ii)] The rescaled absolutely continuous part of the gradient satisfies $\nabla_h y^h \to \big( \partial_1 y \mid (\partial_1 y)^{\perp} \big)$ strongly in $L^2(\Omega)$.
\end{itemize}
\end{theorem}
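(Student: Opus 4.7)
My plan is to adapt the Friesecke--James--M{\"u}ller dimension-reduction strategy to the SBV setting, using the piecewise geometric rigidity theorem (Theorem~\ref{theo:quant-rig}) in place of the classical rigidity estimate. Working in the unrescaled beam $\Omega_h$ with $v^h(x_1, x_2) := y^h(x_1, x_2/h)$, I partition $(0, L)$ into $N = \lfloor L/h \rfloor$ intervals of length $h$ to obtain squares $S_i^h = (ih, (i+1)h) \times (-h/2, h/2)$. The crack term in $I^h(y^h) \leq C$ forces the $x_1$-projection of $J_{v^h}$ to have length at most $Ch$, so at most a uniformly bounded number $N_0$ of the $S_i^h$ contain a substantial, vertically-crossing crack. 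On each remaining ``good'' square, Theorem~\ref{theo:quant-rig} with a fixed small parameter $\rho>0$ yields a modified deformation $\tilde v^h$ (differing from $v^h$ on a set of area $\leq \rho h^2$) and a single rotation $R_i^h \in \SO(2)$ satisfying $\int_{S_i^h}|\nabla \tilde v^h - R_i^h|^2\,dx \leq \hat C E_i^h$, where $E_i^h := \int_{S_i^h}W(\nabla v^h)\,dx$ and $\sum_i E_i^h \leq Ch^3$ from the elastic part of the energy bound.

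Applying the same theorem on the doubled rectangle $S_i^h \cup S_{i+1}^h$ (whose aspect ratio is $2$) and comparing with $R_i^h$, $R_{i+1}^h$ yields $|R_{i+1}^h - R_i^h|^2 \leq \hat C h^{-2}(E_i^h + E_{i+1}^h)$ for consecutive good indices, hence the uniform discrete Sobolev bound $\sum_i h^{-1}|R_{i+1}^h - R_i^h|^2 \leq \hat C$. Rescaling to $\Omega$ and regarding $R^h:(0,L)\to\SO(2)$ as piecewise constant on intervals of length $h$, BV compactness together with closedness of $\SO(2)$ yields a subsequence $R^h \to R$ a.e.\ and in every $L^p$ with $R \in \PW^{1,2}((0,L);\SO(2))$ having at most $N_0$ jumps. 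Setting $\bar y^h(x_1) := \int_{-1/2}^{1/2} y^h(x_1, x_2)\,dx_2$, the pointwise bound $|\partial_2 y^h| \leq hM$ on the absolutely continuous part, together with the smallness of the modification and crack sets, yields $y^h - \bar y^h \to 0$ in $L^1(\Omega)$; and a Jensen-type argument gives $(\bar y^h)' \to R\mathbf{e}_1$ in $L^2$, hence $\bar y^h \to \bar y$ in $L^1$ with $\bar y' = R\mathbf{e}_1$, so that $|\bar y'| = 1$, $\bar y \in \PW^{2,2}((0, L); \R^2)$ and $|\bar y|\leq M$. This establishes~(i) with $y(x) := \bar y(x_1)$.

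For~(ii), rescaling the rigidity estimate from Step~1 gives $\int_\Omega |\nabla_h \tilde y^h - R^h|^2\,dx \leq \hat C h^2$, so $\nabla_h \tilde y^h \to R$ strongly in $L^2(\Omega)$; the second column is automatically $R\mathbf{e}_2 = (R\mathbf{e}_1)^\perp = (\bar y')^\perp$ since $R\in\SO(2)$ pointwise. Passing from $\tilde y^h$ back to $y^h$ uses the uniform $L^\infty$-bound $\|\nabla_h y^h\|_{L^\infty}\leq M$ and the area bound on the modification set, via a diagonal argument in $\rho \to 0$. I expect the main obstacle to be the combinatorial and quantitative bookkeeping in Steps~1--2: Theorem~\ref{theo:quant-rig} only provides estimates for a modified $\tilde v^h$ with constants depending on $\rho$ (issue~1 in the introduction), and these modifications on different squares need not agree, so the pairwise rigidity argument on $S_i^h \cup S_{i+1}^h$ must be made compatible with the per-square modifications in order for the uniform discrete Sobolev bound on $R^h$ and the uniform bound $N_0$ to survive the patching (issue~2). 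Verifying that this patching does not introduce spurious jumps or destroy the estimate $|R_{i+1}^h - R_i^h|^2 \lesssim h^{-2}(E_i^h + E_{i+1}^h)$ is the critical technical step.
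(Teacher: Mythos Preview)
Your overall strategy coincides with the paper's: cover the beam by $O(h^{-1})$ unit-scale rectangles, apply Theorem~\ref{theo:quant-rig} on each good one, compare rotations on overlaps by reapplying the theorem on doubled rectangles, and then pass to the limit in a piecewise-$W^{1,2}$ rotation field. The crack-localization and $\rho$-diagonalization issues you flag at the end are exactly the ones the paper confronts. However, your sketch overstates what Theorem~\ref{theo:quant-rig} provides, and this affects two steps.

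First, the theorem does \emph{not} give a single rotation $R_i^h$ with $\int_{S_i^h}|\nabla \tilde v^h - R_i^h|^2\le\hat C E_i^h$. It gives a Caccioppoli partition $(P_{i,j})_j$ with one rotation per piece; you must argue (via the isoperimetric inequality and the perimeter bound, as the paper does in \eqref{eq:Per-Pa1-est}) that on a good rectangle the largest piece $P_{i,1}$ fills almost all of it, and then work with $R_i^h:=R_{i,1}$. Second, even on $P_{i,1}$ the full-gradient estimate \eqref{rig-eq: main properties2}(iv) only gives exponent $1-\eta$, i.e.\ $\|\nabla\hat u\|_{L^2}^2\le\hat C\tilde\eps^{\,9/10}$; the linear bound holds only for the symmetrized strain and for $\hat u$ itself (items (ii)--(iii)). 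Consequently, the neighboring-rotation estimate $|R_{i+1}^h-R_i^h|^2\le\hat C h^{-2}(E_i^h+E_{i+1}^h)$ cannot be obtained by comparing gradients. The paper instead compares the \emph{deformations}: by \eqref{rig-eq: main properties2}(ii) one has $\|w^h-r_{i,1}\|_{L^2(P_{i,1}\cap V_i)}\le\hat C\sqrt{\eps_i}$, and since $r_{i,1}$, $r_{i+1,1}$, and the rotation from the doubled rectangle are all affine and agree with $w^h$ on sets of uniformly positive overlap, one reads off $|R_{i,1}-R_{i+1,1}|^2\le\hat C(\eps_i+\eps_{i+1})$ directly (Lemma~\ref{lemma:RaRb-est}). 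For part~(ii), the weaker exponent $9/10$ is harmless: with $\eps_a\le Ch$ one gets $h\sum_a\eps_a^{9/10}\le\hat C h^{9/10}\to0$, which still yields $\nabla_h y^h\to\bar R$ in $L^2$.

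Two smaller points: your $E_i^h$ should include the crack term $h\,\mathcal H^1(J_{v^h}\cap S_i^h)$, since $\tilde\eps$ in Theorem~\ref{theo:quant-rig} does; and the averaging argument for $y^h-\bar y^h\to0$ needs the observation that $|D_2 y^h|(\Omega)=O(h)$, which follows from $|\partial_2 y^h|\le hM$ \emph{together with} $\int_{J_{y^h}}|\nu_2|\,d\mathcal H^1\le Ch$ (from the surface term) and $|[y^h]|\le 2M$---the absolutely continuous bound alone is not enough in $SBV$.
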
 

\begin{rem}\label{rem:main-res}
\begin{enumerate}
\item\label{rem:main-res-M} The constant $M$, assumed to be (much) larger than $1$ and $L$, imposes a restriction (for the unrescaled deformations) on both $\| v \|_{L^{\infty}}$ and $\| \nabla v \|_{L^{\infty}}$. Confining $v$ in this way effectively models a large box containing the deformed specimen $v(\Omega_h)$ and prevents parts of $\Omega_h$ from escaping to $\infty$. The restriction on $\nabla v$ is necessary for technical reasons as it allows us to apply a quantitative piecewise rigidity estimate, recently obtained in \cite{FriedrichSchmidt:15b}. As $M$ can be chosen arbitrarily large, well beyond the elastic regime of the specimen under investigation, in applications such a restriction is in fact not too severe. This is even less so in our present setting of a beam in the bending dominated energy regime where, as we will see, the nonlinear strain is infinitesimally close to $\Id$ and there is no restriction on the linearized (infinitesimal) theory. We also remark that such a constraint on $\nabla v$ can be justified for certain (interpolations of) atomistic models whose small energy is related to the typical interatomic spacing as, e.g., in \cite{FriedrichSchmidt:14,FriedrichSchmidt:15a,FriedrichSchmidt:15c}. In these discrete models an atomistic unit cell can be considered effectively broken whenever its discrete gradient exceeds a finite threshold value. Its contribution to the total energy then enters through the surface part of the energy functional. 

\item\label{rem:main-res-better-top} In view of the $L^{\infty}$-bound on $y^h$ and $\nabla_h y^h$ we have $y^h \to y$ and $\nabla_hy^h \to \big( \partial_1 y \mid (\partial_1 y)^{\perp} \big)$ in $L^p$ for any $1 \le p < \infty$ in Theorem \ref{theo:compactness}. We thus may replace the $L^1$ convergence in Theorem \ref{theo:Gamma-convergence} by convergence in $SBV^p(\Omega; \R^2)$ for any $1 \le p < \infty$ in the strong sense that $y^{h} \to y$, $\nabla y^h \to \nabla y$ strongly in $L^p$ and $D^s y^h \weaklystar D^s y$ weakly* as Radon measures (cf.\ also Theorem \ref{theo:SBV-compactness}). 
\end{enumerate} 
\end{rem}

%While our general strategy of the proof is the same as for the corresponding result in the Sobolev setting for pure elasticity (cf.\ \cite{FJM:02}), as we will see, new and technically considerably more involved arguments are necessary for $SBV$ deformations. We will therefore split the proof of Theorem \ref{theo:compactness} into a number of lemmas. 

It is straightforward to account for appropriate body forces in these functionals. Suppose $f^h:\Omega\to\R^2$ is a body force such that $h^{-2}f^h \to f$ in $L^1(\Omega;\R^2)$ and define the energy functionals under the load $f^h$ and the limiting energy functional by
\begin{align*}
  J^h(y) 
  & := \begin{cases} 
        I^h(y) - h^{-2} \int_{\Omega} y(x) \cdot f^h(x) \, dx & \mbox{for } y \in {\cal A}^h, \\ 
       + \infty & \mbox{otherwise,}
    \end{cases} 
\end{align*}
respectively, 
\begin{align*}
  J^0(y) 
  & := \begin{cases} 
        I^0(y) - \int_0^L \bar{y}(x_1) \cdot \bar{f}(x_1) \, dx & \mbox{for } y \in {\cal A},~ y(x) = \bar{y}(x_1) \mbox{ a.e., } \\ 
       + \infty & \mbox{otherwise,}
    \end{cases} 
\end{align*}
where $\bar{f} = \int_{-1/2}^{1/2} f(\cdot,s) \, ds$.

\begin{corollary}[Gamma-convergence and compactness]\label{cor:Gamma-compactness-forces} 
The functionals $J^h$ $\Gamma$-converge to $J^0$ on $SBV(\Omega; \R^2)$ with respect to the strong $L^1$-topology as $h \to 0$. 

If a sequence $y^h$ satisfies $\sup_h J^h(y^h) < \infty$, then there exists a subsequence (not relabeled) and a limiting $y \in {\cal A}$ such that 
$$ y^{h} \to y \quad \mbox{and} \quad 
   \nabla y^h \to \nabla y $$ 
strongly in $L^p$ for any $1 \le p < \infty$. 
\end{corollary}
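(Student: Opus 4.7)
The plan is to derive both parts of the corollary from Theorems \ref{theo:Gamma-convergence} and \ref{theo:compactness} by treating the load as a continuous (in fact, uniformly bounded) perturbation along sequences in $\mathcal{A}^h$. The key observation is that every $y \in \mathcal{A}^h$ satisfies $\|y\|_{L^\infty} \le M$, so
\[
  \left| h^{-2} \int_\Omega y \cdot f^h \, dx \right|
  = \left| \int_\Omega y \cdot (h^{-2} f^h) \, dx \right|
  \le M \, \|h^{-2} f^h\|_{L^1},
\]
which is bounded uniformly in $h$ since $h^{-2} f^h \to f$ in $L^1$. This is the only real input: once the $L^\infty$ confinement of $\mathcal{A}^h$ is in hand, the load term converges along bounded-energy sequences by an elementary dominated convergence argument, and no serious additional obstacle arises beyond the routine continuous-perturbation technique.

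For the compactness statement, the bound above gives $\sup_h I^h(y^h) < \infty$ whenever $\sup_h J^h(y^h) < \infty$, so Theorem \ref{theo:compactness} yields a subsequence and $y \in \mathcal{A}$ with $y^h \to y$ in $L^1$ and $\nabla_h y^h \to (\partial_1 y \mid (\partial_1 y)^\perp)$ in $L^2$. The $L^\infty$ bounds inherent in $\mathcal{A}^h$ upgrade this to strong $L^p$ convergence for every $1 \le p < \infty$, as in Remark \ref{rem:main-res}.\ref{rem:main-res-better-top}. Since $\partial_2 y^h = h \cdot (h^{-1} \partial_2 y^h)$ is bounded by $hM$ in $L^\infty$, it tends to $0$ in every $L^p$; together with $\partial_1 y^h \to \partial_1 y$ in $L^p$ and $\partial_2 y = 0$, this gives $\nabla y^h \to \nabla y$ in $L^p$.

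For the $\liminf$ inequality, given $y^h \to y$ in $L^1$ with $\liminf J^h(y^h) < \infty$, extract a subsequence realizing the $\liminf$ along which $J^h(y^h)$ is bounded, so that $I^h(y^h)$ is bounded by the perturbation estimate and the compactness step above applies (in particular $y \in \mathcal{A}$; if not, the $\liminf$ is forced to be $+\infty$, consistent with $J^0(y) = +\infty$). Theorem \ref{theo:Gamma-convergence}(i) yields $\liminf I^h(y^h) \ge I^0(y)$. Splitting
\[
  \int_\Omega y^h \cdot (h^{-2} f^h) \, dx
  = \int_\Omega y^h \cdot (h^{-2} f^h - f) \, dx + \int_\Omega y^h \cdot f \, dx,
\]
the first term is bounded by $M \, \|h^{-2} f^h - f\|_{L^1} \to 0$ and the second converges to $\int_\Omega y \cdot f \, dx = \int_0^L \bar{y}(x_1) \cdot \bar{f}(x_1) \, dx_1$ by dominated convergence (using $|y^h| \le M$ and $f \in L^1$), giving $\liminf J^h(y^h) \ge J^0(y)$. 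For the recovery sequence, given $y \in \mathcal{A}$ apply Theorem \ref{theo:Gamma-convergence}(ii) to obtain $y^h \in \mathcal{A}^h$ with $y^h \to y$ in $L^1$ and $I^h(y^h) \to I^0(y)$; the identical computation shows convergence of the perturbation term, hence $J^h(y^h) \to J^0(y)$.
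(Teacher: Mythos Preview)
Your proof is correct and follows essentially the same approach as the paper: use the uniform $L^\infty$-bound on $y \in \mathcal{A}^h$ to show $|J^h(y)-I^h(y)| \le M\|h^{-2}f^h\|_{L^1} \le C$, deduce compactness from Theorem~\ref{theo:compactness} and Remark~\ref{rem:main-res}.\ref{rem:main-res-better-top}, and then pass the load term to the limit along bounded-energy sequences via bounded convergence in measure. Your argument is slightly more explicit than the paper's in spelling out why $\nabla y^h \to \nabla y$ (handling $\partial_2 y^h = h\,(h^{-1}\partial_2 y^h) \to 0$ separately) and in splitting the load term, but there is no substantive difference.
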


Clamped boundary conditions can be included in this analysis conveniently by considering the enlarged domain $(-\eta, L + \eta)$ for an arbitrary $\eta > 0$ and, for given $y_0, e_0, y_L, e_L \in \R^2$ with $|y_0|, |y_L| < M$ and $|e_0| = |e_L| = 1$, defining the energy functionals 
\begin{align*}
  J^h_{\rm bv}(y) 
   := \begin{cases} 
      J^h(y)& \mbox{for } y \in {\cal A}_{\rm bv}^h, \\ 
       + \infty & \mbox{otherwise}
    \end{cases} 
\qquad\mbox{and}\qquad 
  J^0_{\rm bv}(y)  
   := \begin{cases} 
      J^0(y) & \mbox{for } y \in {\cal A}_{\rm bv}, \\ 
       + \infty & \mbox{otherwise,}
    \end{cases}
\end{align*}
where $J^h, {\cal A}^h, J^0, {\cal A}$ are the functionals introduced above, but on the domain $(-\eta, L + \eta)$, and 
\begin{align*}
  {\cal A}_{\rm bv}^h 
  = \big\{ y \in {\cal A}^h : 
    y(x) &= y_0 + x_1 e_0 + h x_2 e_0^{\perp} ~\mbox{ for }~ - \eta < x_1 < 0 ~\mbox{ and }~ \\ 
    y(x) &= y_L + (x_1 - L) e_L + h x_2 e_L^{\perp} ~\mbox{ for }~ L < x_1 < L + \eta \big\}, \\ 
  {\cal A}_{\rm bv} 
  = \big\{ y \in {\cal A} : 
    \bar{y}(x_1) &= y_0 + x_1 e_0 ~\mbox{ for }~ - \eta < x_1 < 0 ~\mbox{ and }~ \\ 
    \bar{y}(x_1) &= y_L + (x_1 - L) e_L ~\mbox{ for }~ L < x_1 < L + \eta \big\}. 
\end{align*}
Observe that finite energy deformations are rigid on $(-\eta, L + \eta) \setminus (0, L)$ so that this part does not contribute to the energy. In particular, $J^h_{\rm bv}(y)$ and $J^0_{\rm bv}(y)$ are in fact independent of the choice of $\eta$. However, these functionals account for the physically relevant possibility that cracks concentrate near the boundary $\{0, L\} \times (-\frac{1}{2}, \frac{1}{2})$ and a limiting deformation $y \in {\cal A}_{\rm bv}$ does not attain the boundary values 
$$ y(0) = y_0,~ y'(0) = e_0,~ y(L) = y_L,~ y'(L) = e_L $$ 
in the sense of traces. In such a case, non-attainment of the prescribed values at the left or right end of the beam is penalized in the limiting energy by the crack energy amount $\beta$ each. 

\begin{corollary}[Gamma-convergence and compactness]\label{cor:Gamma-compactness-forces-bv} 
Let $\eta > 0$. The functionals $J^h_{\rm bv}$ $\Gamma$-converge to $J^0_{\rm bv}$ on $SBV \big( (-\eta, L + \eta) \times (-\frac{1}{2}, \frac{1}{2}); \R^2 \big)$ with respect to the strong $L^1$-topology as $h \to 0$. 

If a sequence $y^h$ satisfies $\sup_h J^h_{\rm bv}(y^h) < \infty$, then there exists a subsequence (not relabeled) and a limiting $y \in {\cal A_{\rm bv}}$ such that 
$$ y^{h} \to y \quad \mbox{and} \quad 
   \nabla y^h \to \nabla y $$ 
strongly in $L^p$ for any $1 \le p < \infty$. 
\end{corollary}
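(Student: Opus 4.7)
The plan is to derive both statements from Corollary \ref{cor:Gamma-compactness-forces}, applied on the enlarged rescaled domain $\tilde{\Omega} = (-\eta, L+\eta) \times (-\tfrac{1}{2}, \tfrac{1}{2})$, by checking that the clamped boundary constraints are preserved in the limit and can be realized by admissible recovery sequences.

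For compactness and the $\liminf$ inequality, let $y^h \in {\cal A}_{\rm bv}^h$ with $\sup_h J^h_{\rm bv}(y^h) < \infty$. On the outer strips $\big( (-\eta, 0) \cup (L, L+\eta) \big) \times (-\tfrac{1}{2}, \tfrac{1}{2})$ each $y^h$ coincides with an affine isometry, so $\nabla_h y^h \in \SO(2)$ and $J_{y^h}$ is empty there; these strips therefore contribute neither bulk nor surface energy to $I^h(y^h)$, and hence $\sup_h J^h(y^h) < \infty$ on $\tilde{\Omega}$. Corollary \ref{cor:Gamma-compactness-forces} then yields a subsequence with $y^h \to y$ and $\nabla y^h \to \nabla y$ strongly in $L^p$ for every $1 \le p < \infty$ and some $y \in {\cal A}$. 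Passing to the pointwise $L^1$ limit in $y^h(x) = y_0 + x_1 e_0 + h x_2 e_0^{\perp}$ on $(-\eta, 0) \times (-\tfrac{1}{2}, \tfrac{1}{2})$, and analogously at the right end, identifies $\bar y$ with the prescribed affine maps there, so $y \in {\cal A}_{\rm bv}$. The $\liminf$ inequality now follows at once from
$$
\liminf_{h \to 0} J^h_{\rm bv}(y^h) = \liminf_{h \to 0} J^h(y^h) \ge J^0(y) = J^0_{\rm bv}(y),
$$
noting that any mismatch of $y$ with the prescribed data at $0$ or $L$ creates jumps of $\bar y$ or $\bar y'$ at these points, which are correctly charged by $\# (J_{\bar y} \cup J_{\bar y'})$.

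For the recovery sequence, fix $y \in {\cal A}_{\rm bv}$. The Kirchhoff--type ansatz underlying the proof of Theorem \ref{theo:Gamma-convergence} takes the schematic form
$$
y^h(x_1, x_2) = \bar y(x_1) + h x_2 (\bar y'(x_1))^{\perp} + h^2 x_2^2 \, q(x_1),
$$
where $q$ is a Poisson--type correction that depends on $\bar y''$ and vanishes whenever $\bar y'' = 0$. On the outer intervals $(-\eta, 0)$ and $(L, L+\eta)$, where $\bar y$ is affine with constant unit tangent $e_0$ resp.\ $e_L$, this ansatz reduces to precisely the prescribed rigid motions, so $y^h \in {\cal A}_{\rm bv}^h$. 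Discontinuities of $\bar y$ or $\bar y'$ at the endpoints $0$ or $L$, arising from a mismatch with the boundary data, are reproduced at finite $h$ by inserting vertical cuts at the corresponding $x_1$--coordinates; each such cut has ${\cal H}^1$--length $1$ with horizontal unit normal and therefore contributes exactly $\beta$ to $I^h(y^h)$, matching the $\beta$ per jump counted by $J^0_{\rm bv}(y)$. The body force term passes to the limit through the strong $L^p$ convergence $y^h \to y$ combined with $h^{-2} f^h \to f$ in $L^1$.

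The main obstacle is checking that the recovery sequence produced in Section \ref{sec:Gamma} can indeed be taken in the precise form above, reducing to the prescribed rigid motions on the outer strips without spurious bulk or surface contributions, and realizing endpoint discontinuities by isolated vertical cuts of cost exactly $\beta$. Since the construction operates entirely in the bending regime, where every elastic correction involves $\bar y''$, this amounts to a routine verification of the recovery sequence construction of Theorem \ref{theo:Gamma-convergence}, together with the standard $SBV$ insertion of vertical cuts localized at the points of $(J_{\bar y} \cup J_{\bar y'}) \cap \{0, L\}$.
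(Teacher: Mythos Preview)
Your proposal is correct and follows essentially the same route as the paper: compactness and the $\liminf$ inequality are inherited from Corollary~\ref{cor:Gamma-compactness-forces} on the enlarged domain, and the recovery sequence is the one from Theorem~\ref{theo:Gamma-convergence}(ii), which reduces to the prescribed rigid motions on the outer strips because the correction term $q$ vanishes where $\bar y'' = 0$. The only point the paper makes slightly more explicit is that for general (non-smooth) $y \in {\cal A}_{\rm bv}$ one must also check that the piecewise-smooth approximants $y_k$ used in the density step of Theorem~\ref{theo:Gamma-convergence}(ii) can be chosen to coincide with $y$ on $(-\eta,0) \cup (L, L+\eta)$, so that the diagonal sequence stays in ${\cal A}^h_{\rm bv}$; your reference to the ``routine verification'' covers this, but it is worth naming.
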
 

\begin{rem}
An analogous statement is true if clamped boundary conditions are imposed only at one end of the beam. 
\end{rem}

We will prove Corollaries \ref{cor:Gamma-compactness-forces} and \ref{cor:Gamma-compactness-forces-bv} at the end of Section \ref{sec:Gamma}. As a direct consequence of these results we obtain the convergence of almost minimizers to minimizers of the corresponding limiting functional. 
\begin{corollary}[Convergence of almost minimizers]\label{cor:conv-almost-min} If $(y^{h})$ is a sequence of almost minimizers of $J^h$ or of $J^h_{\rm bv}$, i.e.,
$$ J^h(y^{h}) - \inf J^h \to 0, 
   \quad\mbox{respectively,}\quad 
   J^h_{\rm bv}(y^{h}) - \inf J^h_{\rm bv} \to 0, $$
then there exists $y \in {\cal A}$, respectively, $y \in {\cal A}_{\rm bv}$, such that $y^{h} \to y$ in $L^1$ (for a subsequence) and $y$ minimizes $J^0$, respectively, $J^0_{\rm bv}$. 
\end{corollary}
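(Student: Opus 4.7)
The plan is to invoke the standard ``fundamental theorem of $\Gamma$-convergence'' argument, combining the $\Gamma$-convergence and compactness assertions already proved in Corollaries \ref{cor:Gamma-compactness-forces} and \ref{cor:Gamma-compactness-forces-bv}. The argument splits into three routine steps: an a priori energy bound on the almost minimizing sequence $(y^h)$, extraction of a converging subsequence, and comparison with arbitrary limiting competitors through recovery sequences.

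First I would verify that $\sup_h J^h(y^h) < \infty$, which is a prerequisite for any compactness to apply. It suffices to exhibit a single limiting deformation $z$ with $J^0(z) < \infty$ (respectively $J^0_{\rm bv}(z) < \infty$). In the free case one can simply take $\bar z(x_1) = (x_1, 0)^T$, which is rigid with vanishing curvature and no jump set. In the clamped case one constructs a piecewise rigid curve on $(0,L)$ matching the prescribed end data $y_0, e_0, y_L, e_L$ at the cost of finitely many jumps in $\bar z$ and $\bar z'$, so that $J^0_{\rm bv}(z)$ is finite (the load term is controlled using $\bar f \in L^1$). The recovery sequence property then yields $z^h \to z$ in $L^1$ with $J^h(z^h) \to J^0(z)$, and almost minimality gives
\[
J^h(y^h) \;\le\; \inf J^h + o(1) \;\le\; J^h(z^h) + o(1),
\]
hence $\limsup_{h\to 0} J^h(y^h) \le J^0(z) < \infty$.

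With this uniform energy bound in hand I would apply the compactness statement of the appropriate corollary to extract a (not relabelled) subsequence of $(y^h)$ converging in $L^1$ to some $y \in {\cal A}$ (respectively $y \in {\cal A}_{\rm bv}$). For any competitor $w \in {\cal A}$ (respectively $w \in {\cal A}_{\rm bv}$) choose a recovery sequence $w^h \to w$ with $J^h(w^h) \to J^0(w)$. Combining the $\Gamma$-liminf inequality with almost minimality $J^h(y^h) \le J^h(w^h) + \eps_h$, $\eps_h \to 0$, yields
\[
J^0(y) \;\le\; \liminf_{h\to 0} J^h(y^h) \;\le\; \liminf_{h\to 0}\bigl( J^h(w^h) + \eps_h \bigr) \;=\; J^0(w),
\]
so $y$ is a minimizer of the limit functional.

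There is no genuine obstacle here: once $\Gamma$-convergence is paired with the strong compactness already established, the result is a direct corollary. The only mildly nonautomatic point is producing a finite-energy admissible competitor in the clamped case, but this is a purely constructive exercise (a piecewise affine interpolant matching the boundary jets) and not a conceptual difficulty.
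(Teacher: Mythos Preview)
Your proposal is correct and takes essentially the same approach as the paper: the paper's proof consists of the single line ``Immediate from Corollaries \ref{cor:Gamma-compactness-forces} and \ref{cor:Gamma-compactness-forces-bv},'' which is precisely the fundamental theorem of $\Gamma$-convergence that you have spelled out in detail. Your additional care in explicitly exhibiting a finite-energy competitor to secure the a priori bound $\sup_h J^h(y^h) < \infty$ is a welcome clarification that the paper leaves implicit.
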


\begin{proof} 
Immediate form Corollaries \ref{cor:Gamma-compactness-forces} and \ref{cor:Gamma-compactness-forces-bv}. 
\end{proof}

%--------------------------------------------------------------------------
%--------------------------------------------------------------------------
\section{Preliminaries: $\bm{SBV}$ and quantitative piecewise rigidity}\label{sec:Preliminaries} 
%--------------------------------------------------------------------------

For convenience of the reader we first briefly review the definition of the space $SBV$ of special functions of bounded variation which serves as our basic model for deformations exhibiting both elastic regions and cracks. We then state the fundamental compactness theorem and a closure result in $SBV$. We also introduce the notion of Caccioppoli partitions. For an exhaustive treatment of $BV$ and $SBV$ functions we refer to \cite{Ambrosio-Fusco-Pallara:2000}. With these preparations we can finally state a quantitative piecewise geometric rigidity result in $SBV$ that was recently obtained in \cite{FriedrichSchmidt:15b} and which is a main ingredient in our proofs of Theorems \ref{theo:compactness} and \ref{theo:Gamma-convergence}. 

Suppose $\Omega \subset \R^d$ is a bounded Lipschitz domain. $y \in L^1(\Omega; \R^m)$ is said to be an element of $BV(\Omega; \R^m)$ if its distributional derivative $D y$ is a finite $\R^{m \times d}$-valued Radon measure. Accordingly, $D y$ can be decomposed into an absolutely continuous part $\nabla y$ with respect to the Lebesgue measure ${\cal L}^d$ and a singular part $D^s y$. If the Cantor part of $D^s y$ vanishes we say that $y$ is a special function of bounded variation and write $y \in SBV(\Omega, \R^m)$. In this case $D y$ takes the form 
\begin{align*}
  Dy
  = \nabla y {\cal L}^d + (y^+ - y^-) \otimes \nu_y {\cal H}^{d-1}\lfloor J_y.
\end{align*}
Here ${\cal H}^{d-1}$ denotes the $(d-1)$-dimensional Hausdorff measure, $J_y$ is an ${\cal H}^{d-1}$-rectifiable subset of $\Omega$, $\nu_y$ is the normal to $J_y$ and $y^+, y^-$ are the one-sided limits of $y$ at $J_y$. (If $y$ is a deformation, then $J_y$ is the `crack set' and $(y^+ - y^-) \otimes \nu_y$ measures the `crack opening'.) The subset of those $y \in SBV(\Omega; \R^m)$ for which $\nabla y \in L^p$ and ${\cal H}^{d-1}(J_y) < \infty$ is denoted $SBV^p(\Omega, \R^m)$. 

In the one-dimensional case $d = 1$ where $\Omega = (a,b)$ is an interval, the space $SBV^p((a,b), \R^m)$ coincides with the space $\PW^{1,p}((a,b); \R^m)$ of piecewise $W^{1,p}$ Sobolev functions consisting of those $y \in L^1((a,b); \R^m)$ for which 
\begin{align*}
  \exists \, a = t_0 < t_1 < \ldots < t_n = b  
  ~:~ y \in W^{1,p}((t_{i-1}, t_i); \R^m) \quad \forall \, i = 1, \ldots, n.  
\end{align*}
If $\{t_0, \ldots, t_{n}\}$ is the minimal set with this property, then $J_y = \{t_1, \ldots, t_{n-1}\}$ (and $\nu_y \equiv 1$). We may then assume that $y$ is uniformly continuous on the intervals $(t_{i-1}, t_i)$ and $y^{\mp}(t_i)$ are the limits of $y(t)$ as $t \to t_i$ from the left, respectively, from the right. For $d = 1$ we more generally also consider the spaces 
\begin{align*}
 \PW^{k,p}((a,b); \R^m) 
   &= \big\{ y \in L^1((a,b); \R^m) : \exists \, a = t_0 < t_1 < \ldots < t_n = b \\ 
   &\qquad\qquad \mbox{ such that } y \in W^{k,p}((t_{i-1}, t_i); \R^m) ~ \forall \, i = 1, \ldots, n \big\} 
\end{align*}
of piecewise $W^{k,p}$ regular functions, $k \in \N$. For a function $y$ in this space the minimal set $\{t_1, \ldots, t_{n-1}\}$ with $y \in W^{k,p}((t_{i-1}, t_i); \R^m)$ for $i = 1, \ldots, n$ (and $t_0 = a$, $t_n = b$) is $J_{y} \cup J_{y'} \cup \ldots \cup J_{y^{(k-1)}}$. 

We state the fundamental compactness result in $SBV$, first proved in \cite{Ambrosio:1989}, cf.\ also \cite[Theorems 4.7 and 4.8]{Ambrosio-Fusco-Pallara:2000}, as follows.  
\begin{theorem}\label{theo:SBV-compactness}
Let $(y_k)$ be a sequence in $SBV^p(\Omega; \R^m)$, $1 < p < \infty$, such that 
%for a function $\psi : (0, \infty) \to (0, \infty)$ with $\lim_{s \to \infty} \frac{\psi(s)}{s} = \infty$ 
\begin{align*}
%  \int_{\Omega} \psi(|\nabla y_k|) \, dx
  \int_{\Omega} |\nabla y_k|^p \, dx
  + {\cal H}^{d-1}(J_{y_k}) + \| y_k \|_{L^{\infty}} 
  \le C 
\end{align*}
for some constant $C$ not depending on $k$. Then there exists a subsequence (not relabeled) and a function $y \in SBV^p(\Omega; \R^m)$ such that 
\begin{itemize} 
\item[(i)] $y_{k} \to y$ in $L^p(\Omega; \R^m)$, 
\item[(ii)] $\nabla y_k \weakly \nabla y$ in $L^p(\Omega; \R^{d \times m})$ and
\item[(iii)] $\liminf_{k \to \infty} {\cal H}^{d-1}(J_{y_k}) \ge {\cal H}^{d-1}(J_y)$.
\end{itemize} 
Moreover, $D^s y_k \weaklystar D^s y$ weakly* as Radon measures. 
\end{theorem}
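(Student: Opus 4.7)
The plan is to reduce the theorem to standard $BV$ compactness first, then upgrade the limit to $SBV$ via a one-dimensional slicing argument that exploits $p > 1$. As a first step I would show that $|Dy_k|(\Omega)$ is uniformly bounded by splitting $Dy_k$ into its absolutely continuous and jump parts: the former is controlled by $\|\nabla y_k\|_{L^p}$ via H\"older, while the latter has total mass at most $2 \|y_k\|_{L^{\infty}} {\cal H}^{d-1}(J_{y_k})$. The compact embedding $BV \hookrightarrow L^1$ then delivers, along a subsequence, a limit $y \in BV(\Omega; \R^m)$ with $y_k \to y$ in $L^1$ and $Dy_k \weaklystar Dy$ as Radon measures. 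Interpolating with the $L^\infty$ bound yields $y_k \to y$ in $L^p$, and reflexivity of $L^p$ for $p > 1$ allows a further extraction producing a weak limit $g$ of $\nabla y_k$ in $L^p(\Omega; \R^{m \times d})$.

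The heart of the proof is to identify $y$ as an element of $SBV^p$ with $\nabla y = g$ almost everywhere and $J_y$ countably $(d-1)$-rectifiable. My approach would be via the slicing characterization of $SBV$: for every direction $e$ in the unit sphere and for ${\cal H}^{d-1}$-a.e.\ line $L$ parallel to $e$, the restrictions $y_k|_L$ lie in $SBV^p$ with jumps contained in $J_{y_k} \cap L$. The one-dimensional $SBV$ closure theorem, which is elementary when $p > 1$ since an $L^p$-weakly convergent sequence of classical derivatives on an interval cannot concentrate into a singular measure, together with Fatou applied to the slicing formula for ${\cal H}^{d-1}(J_{y_k})$, forces $y|_L$ to lie in $SBV^p$ on almost every slice, with one-dimensional derivative equal to $(g \, e)|_L$ and jump count bounded by $\liminf \# (J_{y_k} \cap L)$. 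The $SBV$-slicing criterion then lifts this to $y \in SBV^p(\Omega; \R^m)$ with $\nabla y = g$ and rectifiable $J_y$. Claim (iii) follows by taking the liminf of the slicewise jump counts and integrating in directions perpendicular to $e$, while $D^s y_k \weaklystar D^s y$ is immediate from the identity $D^s(\cdot) = D(\cdot) - \nabla(\cdot)\,{\cal L}^d$ combined with (ii) and the $BV$ weak-star convergence established above.

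The principal obstacle is the identification step: a weak $BV$ limit of $SBV$ functions need not lie in $SBV$ at all, and even when it does, its absolutely continuous part may fail to coincide with the weak limit of the $\nabla y_k$. The slicing argument succeeds precisely because the hypothesis $p > 1$ forces equi-integrability of the bulk gradients, ruling out the conspiracy by which the diffuse $\nabla y_k$ could collapse into a Cantor component of $Dy$; without this, one would have to invoke the more delicate $GSBV$ and generalized equi-integrability machinery, as in the De Giorgi--Ambrosio treatment of the problem. In practice I would finally just cite Ambrosio's closure theorem \cite{Ambrosio:1989, Ambrosio-Fusco-Pallara:2000}, which packages exactly this mechanism.
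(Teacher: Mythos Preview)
The paper does not prove this theorem at all: it is stated as a known result and attributed to Ambrosio, with a citation to \cite{Ambrosio:1989} and \cite[Theorems~4.7 and~4.8]{Ambrosio-Fusco-Pallara:2000}. Your proposal ultimately lands in the same place, since you close by saying you would cite exactly these references. So in that sense there is nothing to compare.

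Your sketch of the underlying mechanism is broadly correct and follows the standard route: $BV$ compactness from the $L^\infty$ and total-variation bounds, extraction of a weak $L^p$ limit $g$ of the $\nabla y_k$, and then identification of $y \in SBV$ with $\nabla y = g$ via one-dimensional slicing and the elementary one-dimensional closure. One point is a bit too breezy: deducing the full lower semicontinuity~(iii) of $\mathcal{H}^{d-1}(J_y)$ from slicewise jump counts in a single direction $e$ is not enough---what you recover that way is only the lower semicontinuity of the projected measure $\int_{J_y} |\nu_y \cdot e|\,d\mathcal{H}^{d-1}$. To get the full $\mathcal{H}^{d-1}$ measure one has to either supremize over countably many directions and use a covering/rectifiability argument, or invoke the lower semicontinuity of the Mumford--Shah-type surface term directly (which is exactly what Ambrosio's theorem packages). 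Since you already plan to cite the closure theorem, this is not a real gap, but if you were writing a self-contained proof you would need to address it.
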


In fact, we will need to apply this result only in the by far more elementary one-dimensional case. 
\begin{theorem}\label{theo:PW-compactness}
Let $(y_k)$ be a sequence in $\PW^{1,p}((0,L); \R^m)$, $1 < p < \infty$, such that 
\begin{align*}
  \int_0^L |y_k'|^p \, dt
  + \# J_{y_k} + \| y_k \|_{L^{\infty}} 
  \le C 
\end{align*}
for some constant $C$ not depending on $k$. Then there exists a subsequence (not relabeled), a function $\PW^{1,p}((0,L); \R^m)$ and a finite set $J \subset (0, L)$ such that 
\begin{itemize} 
\item[(i)] $y_{k} \to y$ in $L^p((0,L); \R^m)$, 
\item[(ii)] $y_k' \weakly y'$ in $L^p((0,L), \R^m)$ and
\item[(iii)] $J_{y_k} \to J \supset J_y$. 
\end{itemize} 
(In particular, $\liminf_{k \to \infty} \# J_{y_k} \ge \# J_y$.) 
\end{theorem}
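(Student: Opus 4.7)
\emph{Proof proposal.} The plan is to reduce to standard $W^{1,p}$ compactness applied on a fixed partition of $(0,L)$, taking advantage of the fact that the number of jump points is uniformly bounded.

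First, since $\# J_{y_k}$ is uniformly bounded, I pass to a subsequence so that $\# J_{y_k} = n - 1$ is constant and write $J_{y_k} = \{t_1^k < \ldots < t_{n-1}^k\}$. By compactness of $[0, L]^{n-1}$ a further subsequence satisfies $t_i^k \to t_i \in [0, L]$ for each $i$. Let $\{\tau_1 < \ldots < \tau_N\}$ be the distinct values among $\{t_1, \ldots, t_{n-1}\} \cap (0, L)$ and set $\tau_0 = 0$, $\tau_{N+1} = L$. Define $J := \{\tau_1, \ldots, \tau_N\}$, which will play the role of the limiting jump set.

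Next, fix $j \in \{0, \ldots, N\}$ and a small $\delta > 0$. Eventually the interval $I_{j, \delta} := (\tau_j + \delta, \tau_{j+1} - \delta)$ contains no jump point of $y_k$, hence $y_k \in W^{1,p}(I_{j, \delta}; \R^m)$ with $\|y_k\|_{W^{1,p}(I_{j, \delta})} \le C$. The Rellich--Kondrachov theorem provides a subsequence converging strongly in $L^p(I_{j,\delta})$ and weakly in $W^{1,p}(I_{j,\delta})$. A diagonal argument over $j = 0, \ldots, N$ and a sequence $\delta \searrow 0$ yields a further subsequence together with a function $y \in W^{1,p}_{\rm loc}((0,L) \setminus J; \R^m)$ such that $y_k \to y$ in $L^p_{\rm loc}$ and $y_k' \weakly y'$ in $L^p_{\rm loc}$ on $(0, L) \setminus J$.

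The uniform bound $\|y_k\|_{L^\infty} \le C$ combined with dominated convergence upgrades the $L^p_{\rm loc}$ convergence on a cofinite set to $L^p((0,L);\R^m)$ convergence, proving (i). Likewise, $\|y_k'\|_{L^p((0,L))} \le C$ together with uniqueness of weak limits yields $y_k' \weakly y'$ in $L^p((0,L); \R^m)$, and on each component $(\tau_j, \tau_{j+1})$ the limit $y$ lies in $W^{1,p}$, so $y \in \PW^{1,p}((0,L); \R^m)$ with $J_y \subseteq J$. Finally, the convergence $J_{y_k} \to J \supseteq J_y$ (interpreted in the sense that each $t_i^k$ converges to a point of $J \cup \{0, L\}$ and every $\tau_\ell \in J$ is the limit of at least one sequence $t_{i(\ell)}^k$) is immediate from the construction, which also gives $\liminf_k \# J_{y_k} \ge \# J \ge \# J_y$.

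The only (mild) obstacle is bookkeeping when several jump points collapse to a common interior limit or escape to $\{0, L\}$: one must check that such disappearing jumps create no hidden singular concentration in the limit. This is automatic because on each interval eventually disjoint from the accumulation set of $\{t_i^k\}$, the sequence is already uniformly $W^{1,p}$, so the weak limit there is $W^{1,p}$ and contributes no jumps beyond those at the $\tau_\ell$.
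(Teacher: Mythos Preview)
Your argument is correct and is exactly the natural elementary proof of this one-dimensional statement. The paper does not actually supply a proof of Theorem~\ref{theo:PW-compactness}: it is stated as the ``by far more elementary one-dimensional case'' of the $SBV$ compactness theorem (Theorem~\ref{theo:SBV-compactness}) and left to the reader. Your direct approach---freezing the number of jump points, extracting convergent jump locations, and then applying Rellich--Kondrachov on intervals bounded away from their accumulation set---is the standard way to carry this out, and your bookkeeping of collapsing jumps and jumps escaping to $\{0,L\}$ is handled correctly.
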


In our proof of Theorem \ref{theo:Gamma-convergence} we will encounter deformations $y$ where a priori only the absolutely continuous part ${\cal E} y$ of the symmetrized derivative $\frac{1}{2}((Dy)^T + Dy)$ is controlled, which for $y \in SBV(\Omega; \R^d)$ is given by ${\cal E} y = \frac{1}{2}((\nabla y)^T + \nabla y)$. There are compactness results analogous to Theorem \ref{theo:SBV-compactness} in the more general function spaces $SBD$, see \cite{Bellettini-Coscia-DalMaso:1998}, and $GSBD$, see \cite{DalMaso:13}, that are more adapted to this situation. As in fact in our proof of Theorem \ref{theo:Gamma-convergence} we will only need a closure result, we will formulate the following direct consequence of \cite[Theorem 11.3]{DalMaso:13} for SBV functions only, thus circumventing the need of introducing the space $GSBD$ here. 

\begin{theorem} \label{theo:SBD-closure}
Let $(y_k)$ be a sequence in $SBV(\Omega; \R^d)$ such that for a continuous function $\psi : (0, \infty) \to (0, \infty)$ with $\lim_{s \to \infty} \frac{\psi(s)}{s} = \infty$ 
\begin{align*}
  \int_{\Omega} \psi(|y_k|) + \psi(|{\cal E} y_k|) \, dx
  + {\cal H}^{d-1}(J_{y_k})
  \le C 
\end{align*}
for some constant $C$ not depending on $k$. If $y_{k} \weakly y$ in $L^1(\Omega; \R^d)$ with $y \in SBV(\Omega; \R^d))$, then 
\begin{itemize} 
\item[(i)] $y_{k} \to y$ in $L^1(\Omega; \R^d)$, 
\item[(ii)] ${\cal E} y_k \weakly {\cal E} y$ in $L^1(\Omega; \R^{d \times d}_{\rm sym})$ and
\item[(iii)] $\liminf_{k \to \infty} {\cal H}^{d-1}(J_{y_k}) \ge {\cal H}^{d-1}(J_y)$.
\end{itemize} 
\end{theorem}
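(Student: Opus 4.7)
The plan is to derive this statement as a direct application of \cite[Theorem 11.3]{DalMaso:13}, which is the analogous closure result in the larger space $GSBD(\Omega)$, combined with a standard equiintegrability upgrade. Since $SBV(\Omega; \R^d)$ embeds naturally into $GSBD(\Omega)$ and the absolutely continuous part of the symmetrized derivative of a special function of bounded variation is just $\mathcal{E}y = \tfrac12(\nabla y + (\nabla y)^T)$, the hypotheses on $\int_\Omega \psi(|\mathcal{E}y_k|)\,dx$ and on $\mathcal{H}^{d-1}(J_{y_k})$ place the sequence $(y_k)$ exactly in the setting of the GSBD closure theorem.

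First I would invoke \cite[Theorem 11.3]{DalMaso:13} to extract a (not relabeled) subsequence and a limit $\tilde y \in GSBD(\Omega)$ for which $y_k \to \tilde y$ almost everywhere on $\Omega$, $\mathcal{E}y_k \weakly \mathcal{E}\tilde y$ in the sense delivered by that theorem, and $\liminf_k \mathcal{H}^{d-1}(J_{y_k}) \ge \mathcal{H}^{d-1}(J_{\tilde y})$. The next task is to identify $\tilde y$ with the weak $L^1$ limit $y$ appearing in the hypothesis. Here I would use the superlinear growth $\lim_{s\to\infty}\psi(s)/s=\infty$ together with the bound $\int_\Omega\psi(|y_k|)\,dx\le C$: by de la Vall\'ee Poussin this yields equiintegrability of $(y_k)$, so Vitali's convergence theorem upgrades the pointwise a.e.\ convergence to strong convergence $y_k\to\tilde y$ in $L^1(\Omega;\R^d)$. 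Strong $L^1$ convergence implies weak $L^1$ convergence, and uniqueness of the weak limit forces $\tilde y=y$ almost everywhere, giving (i); then (iii) is immediate from the lower semicontinuity supplied by the GSBD theorem.

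For (ii), I would apply de la Vall\'ee Poussin a second time, now to the symmetrized gradients: the uniform bound $\int_\Omega\psi(|\mathcal{E}y_k|)\,dx\le C$ yields equiintegrability of $(\mathcal{E}y_k)$, hence by Dunford--Pettis weak relative compactness in $L^1(\Omega;\R^{d\times d}_{\rm sym})$. Any weak $L^1$ cluster point must coincide, on test against smooth compactly supported functions, with the limit $\mathcal{E}\tilde y=\mathcal{E}y$ delivered by the GSBD theorem. A Urysohn-type argument then promotes this to weak $L^1$ convergence of the full subsequence, proving (ii).

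The main obstacle is the proper invocation of \cite[Theorem 11.3]{DalMaso:13}: GSBD functions need not be integrable and the convergence there is typically stated only in an a.e.\ or in-measure sense outside a small exceptional set determined by the jump geometry, so some care is needed to check that this is strong enough to combine with the equiintegrability coming from $\psi(|y_k|)$ and to identify the GSBD limit $\tilde y$ with the prescribed weak $L^1$ limit $y\in SBV$. Once this identification is in place, the rest is routine measure-theoretic bookkeeping.
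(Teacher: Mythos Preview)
Your proposal is correct and follows exactly the route the paper indicates: the paper does not give a proof at all but simply states the theorem as ``a direct consequence of \cite[Theorem 11.3]{DalMaso:13} for SBV functions only,'' and your argument spells out precisely how that direct consequence works (GSBD closure plus de la Vall\'ee Poussin/Dunford--Pettis to upgrade the convergences and identify the limit with the assumed $y\in SBV$). There is nothing to add.
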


\begin{rem}\label{rem:DPdlVP}
By the Theorems of Dunford-Pettis and de la Vall{\'e}e-Poussin, the assumptions of Theorem \ref{theo:SBD-closure} are satisfied for $y, y_1, y_2, \ldots \in SBV(\Omega; \R^d)$ if $y_{k} \weakly y$ in $L^1(\Omega; \R^d)$, $({\cal E} y_k)_k$ is relatively weakly compact  in $L^1(\Omega; \R^{d \times d})$ and ${\cal H}^{d-1}(J_{y_k}) \le C$. 
\end{rem}

We say that a subset $E \subset \Omega$ has finite perimeter in $\Omega$ if the characteristic function $\chi_E$ belongs to $SBV(\Omega)$. In this case the jump set $J_{\chi_E}$ is denoted by $\partial^{\ast} E$ and called the reduced boundary of $E$. Then $\nabla \chi_E = 0$ a.e.\ and $D \chi_E$ is concentrated on $\partial^{\ast} E$ with $|D \chi_E| = {\cal H}^{d-1} \lfloor \partial^{\ast} E$. The perimeter of $E$ in $\Omega$ is defined as ${\rm Per}(E, \Omega) = {\cal H}^{d-1}(\partial^{\ast} E)$. 

A partition $(E_j)_j$ of $\Omega$ consisting of at most countably many sets $E_j$ of finite perimeter is called a Caccioppoli partition of $\Omega$ if $\sum_j {\rm Per}(E_j, \Omega) < \infty$.

Analogously to the purely elastic case treated in \cite{FJM:02}, the main ingredient into the derivation of an effective dimensionally reduced theory is the following quantitative piecewise geometric rigidity result which for an $SBV$ function $y$ estimates the deviation of $\nabla y$ from a piecewise constant $\SO(2)$-valued mapping with controlled jump part in terms of its energy. This result, which also uses a novel Korn-Poincar\'e inequality in $SBD$ obtained in \cite{Friedrich:15a}, was recently proved in \cite{FriedrichSchmidt:15b}. It provides a quantitative version of a Liouville type piecewise rigidity result by Chambolle, Giacomini and Ponsiglione in \cite{Chambolle-Giacomini-Ponsiglione:2007}. We state it here in a form which directly follows from (the proof of) \cite[Theorem 2.1 and Remark 2.2]{FriedrichSchmidt:15b}. 

Suppose that $W$ satisfies the assumptions of (i) regularity, (ii) frame indifference and (iii) non-degeneracy from Section \ref{sec:Intro}. Also set 
$$ SBV_M(\Omega, \R^2) 
   = \big\{ y \in SBV(\Omega, \R^2) : \| \nabla y \|_{L^{\infty}} \le M,~ {\cal H}^1(J_y) < \infty \big\}. $$ 
for $M > 0$ and $\Omega \subset \R^2$ open. 

\begin{theorem}\label{theo:quant-rig}
Let $\Omega \subset \R^2$  open, bounded with Lipschitz boundary. Let $M>0$ and $0 < \eta, \rho < 1$. Then there are constants $C=C(\Omega,M,\eta)$, $\hat{C}=\hat{C}(\Omega,M,\eta,\rho)$ and universal $c, \bar{c} > 0$ such that the following holds for $\eps >0$ small enough. 

If $y \in SBV_M(\Omega; \R^2) \cap L^2(\Omega; \R^2)$ is such that 
$$ {\cal H}^{1}(J_y) \le M 
   \quad\mbox{and}\quad 
   \int_\Omega \dist^2(\nabla y,\SO(2)) \le M\eps $$ 
and we set $\tilde{\eps} = \int_\Omega \dist^2(\nabla y,\SO(2) ) + \eps {\cal H}^1(J_y)$ and $\Omega_\rho = \{ x \in \Omega : \dist(x, \partial \Omega) > \bar{c} \rho\}$, then there is an open set $\Omega_y$ with $|\Omega_{\rho} \setminus\Omega_y| \le C \rho \eps^{-1} \tilde{\eps}$, a modification  $\hat{y} \in SBV_{cM}(\Omega) \cap L^2(\Omega; \R^2)$ with $\| \hat{y} - y \|^2_{L^2(\Omega_y)} +  \| \nabla \hat{y} - \nabla y \|^2_{L^2(\Omega_y)}\le C \rho \tilde{\eps}$ satisfying the estimates
\begin{align}\label{rig-eq: crack le}
  {\cal H}^1 ( J_{\hat{y}} \cap \Omega_{\rho} ) \le  C \eps^{-1} \tilde{\eps} 
\end{align}
and 
\begin{align}\label{rig-eq: energy le}
  \frac{1}{\eps} \int_{\Omega_\rho} W(\nabla \hat{y}) \, dx 
  \le \frac{1}{\eps}\int_{\Omega} W(\nabla y) \, dx + {\cal H}^1(J_y) + C \rho \eps^{-1} \tilde{\eps}
\end{align}
with the following properties: We find  a Caccioppoli partition ${\cal P} = (P_j)_j$ of $\Omega_\rho$ with 
$$ \sum_j \frac{1}{2} \mathrm{Per}(P_j,\Omega_{\rho}) \le {\cal H}^1(J_y) 
   + C \rho \eps^{-1} \tilde{\eps} $$ 
and, for each $P_j$, a corresponding rigid motion $R_j \cdot + c_j$, $R_j \in \SO(2)$ and $c_j \in \R^2$, such that the function $\hat{u}: \Omega \to \R^2$ defined by
\begin{align}\label{rig-eq: u def2} 
\hat{u}(x) := \begin{cases} \hat{y}(x) - (R_j\,x +c_j) & \ \ \text{ for } x \in P_j \\
                      0                      & \ \ \text{ for } x \in \Omega \setminus \Omega_\rho \end{cases}
\end{align}
satisfies the estimates
\begin{align}\label{rig-eq: main properties2}
\begin{split}
(i) & \ \, {\cal H}^{1}(J_{\hat{u}}) \le C \eps^{-1} \tilde{\eps}, \ \  \ \ \  \ \ \  \ \ \ \   \ \ \   \ \  \ \ \  \ \, %\  \ \  \ \ 
(ii) \,  \ \| \hat{u}\|^2_{L^2(\Omega_\rho)} \le \hat{C} \tilde{\eps}, \\
(iii) & \ \, \sum\nolimits_j \| e(R^T_j \nabla \hat{u})\|^2_{L^2(P_j)} \le \hat{C} \tilde{\eps},  \  \ \ \  \  \ \ \,  
(iv)  \ \, \| \nabla \hat{u}\|^2_{L^2(\Omega_\rho)} \le \hat{C} \tilde{\eps}^{1-\eta}, 
\end{split}
\end{align}
%{\tt Evtl.\ bei (iv) lieber $\tilde{\eps} \eps^{-\eta}$ statt $\tilde{\eps}^{1-\eta}$. Dass $\tilde{\eps}^{1-\eta}$ ok, folgt dann aber erst recht.}\\
where $e(G) = \frac{G + G^T}{2}$ for all $G \in \R^{2 \times 2}$. Moreover, if $y \in L^{\infty}(\Omega)$, then $\| \hat{y} \|_{L^{\infty}} \le c \| y \|_{L^{\infty}}$. 
\end{theorem}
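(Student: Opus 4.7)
The plan is to adapt the Friesecke-James-M\"uller strategy locally on a dyadic covering of $\Omega_\rho$, build a Caccioppoli partition by clustering the resulting local rotations, and finally upgrade the piecewise rigidity of Chambolle-Giacomini-Ponsiglione to a quantitative estimate with the help of the SBD Korn-Poincar\'e inequality of \cite{Friedrich:15a}.

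First I would cover $\Omega_\rho$ by a family $\{Q_i\}$ of squares of side-length comparable to $\rho$ (with bounded overlap) and declare $Q_i$ \emph{good} if ${\cal H}^1(J_y\cap Q_i)\ll \rho$, so that $J_y\cap Q_i$ can be surrounded by a strip of area $O(\rho\,{\cal H}^1(J_y\cap Q_i))$ outside of which $y$ is Sobolev. Summing, the union of bad squares plus healing strips has area $O(\rho\eps^{-1}\tilde\eps)$; this will become $\Omega_\rho\setminus\Omega_y$. On the good Sobolev pieces the FJM rigidity theorem produces rotations $R_i\in\SO(2)$ and translations $c_i$ with
$$
   \int_{Q_i}|\nabla y-R_i|^2\,dx
   \le C\int_{Q_i}\dist^2(\nabla y,\SO(2))\,dx+C\rho\,{\cal H}^1(J_y\cap Q_i).
$$
Defining $\hat y$ by replacing $y$ on each bad square or healing strip with an $L^\infty$-controlled $SBV$ extension of its boundary trace (using Whitney-type constructions) gives \eqref{rig-eq: crack le}-\eqref{rig-eq: energy le} after summing; the $L^\infty$ bound $\|\hat y\|_\infty\le c\|y\|_\infty$ is enforced by truncating the extension on each piece.

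Next I would cluster the squares into a Caccioppoli partition $(P_j)$: neighboring good squares $Q_i,Q_{i'}$ are joined if $|R_i-R_{i'}|$ is below a threshold $\delta\sim\tilde\eps^{1/2}$, and the common face is otherwise charged to the partition's perimeter. Every charged face is paid for either by jump length ${\cal H}^1(J_y)$ separating the two squares (giving the bound ${\cal H}^1(J_y)$) or by an excess in $\int_{Q_i\cup Q_{i'}}|\nabla y-R_i|^2\ge\rho^2\delta^2$, and summing the latter yields the $C\rho\eps^{-1}\tilde\eps$ correction. Assigning $R_j,c_j$ to each cluster and defining $\hat u$ as in \eqref{rig-eq: u def2}, one has $J_{\hat u}\subset J_{\hat y}\cup\bigcup_j\partial^{\ast}P_j$, which gives estimate (i). The symmetric-gradient bound (iii) comes from $e(R_j^T\nabla\hat u)=R_j^T e(\nabla\hat y-R_j)$ on each $P_j$ together with the local FJM estimates above, and (ii) is obtained by applying the SBD Korn-Poincar\'e inequality to $\hat u$ piece-by-piece after the rigid motions have been subtracted.

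The hard part, and in my view the only genuinely new obstacle beyond the Sobolev theory, will be the full-gradient bound (iv): Korn's inequality fails in $SBV$, so the skew-symmetric part of $\nabla\hat u$ on $P_j$ cannot be recovered from $e(R_j^T\nabla\hat u)$ alone. The strategy would be to interpolate between the $L^\infty$ control coming from $\|\nabla\hat y\|_\infty\le cM$ and the $L^2$ control of the symmetric part, using a truncation of the skew part at level $\tilde\eps^{-\eta/2}$; the set on which the skew part is large has measure $O(\tilde\eps^{1-\eta})$ thanks to the jump-set bound (i) and a distributional identification of $\mathrm{curl}(\nabla\hat u-e(\nabla\hat u))$ with a measure supported on $J_{\hat u}$. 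This interpolation is precisely what forces the appearance of the exponent $1-\eta$ in (iv) and of the constant $\hat C$ depending on $\rho$.
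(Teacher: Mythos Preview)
The paper does not prove Theorem~\ref{theo:quant-rig}. It is stated in Section~\ref{sec:Preliminaries} as an imported result, with the explicit remark that it ``directly follows from (the proof of) \cite[Theorem 2.1 and Remark 2.2]{FriedrichSchmidt:15b}''; the theorem is then used as a black box throughout Sections~\ref{sec:compactness} and~\ref{sec:Gamma}. There is therefore no proof in this paper against which to compare your proposal.

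That said, your sketch is broadly in the spirit of how such results are proved, and you correctly single out the full-gradient estimate (iv) as the genuinely delicate point, since Korn's inequality fails in $SBV$. But several steps are vaguer than they should be. In particular: the construction of the modification $\hat{y}$ on bad squares and healing strips is where most of the work hides (one needs an extension that simultaneously controls crack length, elastic energy, and the $L^\infty$ bound, and a naive Whitney construction does not obviously do this in $SBV$); your clustering step asserts that every charged face can be paid for by either jump length or elastic excess, but the dichotomy is not sharp enough as stated to yield the precise perimeter bound with only the $C\rho\eps^{-1}\tilde{\eps}$ correction; and your argument for (iv) via ``truncation of the skew part'' and a curl identity is more heuristic than a proof, since the distributional curl of the skew part of an $SBV$ gradient is not simply a measure on $J_{\hat u}$ that one can straightforwardly bound. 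If you want to actually carry this out, you should consult \cite{FriedrichSchmidt:15b} directly, where the construction is substantially more involved than a one-page outline suggests, and relies in an essential way on the Korn--Poincar\'e inequality of \cite{Friedrich:15a}.
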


%--------------------------------------------------------------------------
%--------------------------------------------------------------------------
\section{Modification, approximation and compactness}\label{sec:compactness}
%--------------------------------------------------------------------------

Throughout this section we assume that $y^h \in {\cal A}^h$ is a bounded energy sequence of deformations verifying 
\begin{align}\label{eq:Ihyh-bound}
  I^h(y^h) 
  \le C 
\end{align}
for a constant $C$ independent of $h$. As it will be convenient in the sequel, we also introduce the rescaled deformations $w^h(x) = h^{-1} v^h(h x) = h^{-1} y^h(h x_1, x_2)$ defined on $h^{-1} \Omega_h$ which are elements of $SBV(h^{-1} \Omega_h; \R^2)$ with $\| w \|_{L^{\infty}} \le M h^{-1}$ and $\| \nabla w \|_{L^{\infty}} \le M$. For $w^h$ the energy bound \eqref{eq:Ihyh-bound} implies 
\begin{align}\label{eq:energy-bound}
  \int_{h^{-1}\Omega_h} \dist^2(\nabla w^h, \SO(2)) \, dx + h {\cal H}^1(J_{w^h}) 
  \le Ch. 
\end{align}
We will first introduce local modifications of $w^h$ and construct an approximating almost $\SO(2)$-valued mapping $\tilde{R}^h$. Next we will determine the limiting behavior of $\tilde{R}^h$ and its dependence on the modification parameters. Finally, the proof of Theorem \ref{theo:compactness} on compactness is given.

%--------------------------------------------------------------------------
\subsection{Approximation and modification}

We fix $n \in \N$ and cover $(0, \lfloor \frac{L-h}{h} \rfloor) \times (-\tfrac{1}{2}, \tfrac{1}{2})$ with the rectangles 
$$ Q_a = (a-1-\tfrac{1}{2n}, a+\tfrac{1}{2n}) \times (-\tfrac{1}{2}, \tfrac{1}{2}), \quad a = 1, \ldots, N = \lfloor \tfrac{L-h}{h} \rfloor. $$  
Let $\rho > 0$ be a small parameter. Then choose $h_0 = h_0(\rho)$ such that Theorem \ref{theo:quant-rig} applies to sets of the form $I \times (-\tfrac{1}{2}, \tfrac{1}{2})$, where $I$ is an interval of length $1+\frac{1}{2n}$, $1+\frac{1}{n}$, $2+\frac{1}{2n}$ or $2+ \frac{1}{n}$, with $\eta = \frac{1}{10}$, $\rho$ as given and all $\eps = h \le h_0$. We also fix a threshold value $\frac{1}{2} < \lambda < 1$. Eventually $\lambda$ will be sent to $1$ after $\rho$ has been sent to $0$. 

In the following estimates we will only consider $h$ with $h < h_0$; generic constants that are independent of $\rho$ and $\lambda$ will be denoted by $C$ whereas constants that may depend on $\rho$ or $\lambda$ are called $\hat{C}$. $n$ will be fixed until the very end of the proof of Theorem \ref{theo:Gamma-convergence}(i) and both $C$ and $\hat{C}$ may depend on $n$. 

We define a set ${\cal G} = {\cal G}(h, \lambda, \rho)$ of `good rectangles' and the complementary set ${\cal B} = {\cal B}(h, \lambda, \rho)$ of `bad rectangles' by 
\begin{align*}
  {\cal G} 
  &= \Big\{ Q_a : {\cal H}^1 \big( J_{w^h} \cap (Q_{a-1} \cup Q_{a} \cup Q_{a+1}) \big) < \lambda \\ 
  &\qquad\qquad\qquad \mbox{ and }~ \int_{Q_a} \dist^2 \big( \nabla w^h, \SO(2) \big) \le \frac{1}{2} (M-1) h_0 \Big\}, \\ 
  {\cal B} 
  &= \{Q_1, \ldots, Q_N \} \setminus {\cal G}. 
\end{align*}
Considering the restriction of $w^h$ to $Q_a$ we set 
\begin{align*} 
  \eps_a = \int_{Q_a} \dist^2(\nabla w^h, \SO(2)) + h {\cal H}^1 ( J_{w^h} \cap Q_a ). 
\end{align*}
By the energy bound \eqref{eq:energy-bound} we then have 
\begin{align}\label{eq:sum-epsa-bound} 
  \sum_{a = 1}^N \eps_a 
  \le 2 \int_{h^{-1}\Omega_h} \dist^2 \big( \nabla w^h, \SO(2) \big) \, dx + 2 h {\cal H}^1(J_{w^h}) 
  \le Ch. 
\end{align}

We begin with the following elementary observation on the energy and number of `bad squares'. 
\begin{lemma}\label{lemma:B-bounds} 
For every $h \le \min\{ \frac{1}{2} (M-1) c, 1 \} h_0$ with $c$ from \eqref{eq:W-lower-bound} we have 
\begin{itemize} 
\item[(i)] $\int_{Q_a} W \big( \nabla w^h \big) + h {\cal H}^1 \big( J_{w^h} \cap (Q_{a-1} \cup Q_{a} \cup Q_{a+1}) \big) \ge \lambda h$ if $Q_a \in {\cal B}$ and 
\item[(ii)] $\# {\cal B} \le C$ for a constant $C$ which is independent of $h$ and $\rho$.
\end{itemize} 
\end{lemma}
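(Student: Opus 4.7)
The claim has a very direct structure: (i) is essentially a case distinction on which of the two defining conditions of ${\cal G}$ fails, and (ii) then follows by summing the lower bound from (i) against the global energy and crack budget already recorded in \eqref{eq:sum-epsa-bound}.

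For (i), suppose $Q_a \in {\cal B}$. If the crack condition in the definition of ${\cal G}$ is the one that fails, i.e.\ ${\cal H}^1\bigl(J_{w^h} \cap (Q_{a-1} \cup Q_a \cup Q_{a+1})\bigr) \ge \lambda$, then multiplying by $h$ the crack term alone already gives $\lambda h$, so the claim is immediate. Otherwise the bulk condition fails: $\int_{Q_a} \dist^2(\nabla w^h, \SO(2)) \ge \tfrac{1}{2}(M-1) h_0$. Using the non-degeneracy assumption \eqref{eq:W-lower-bound} and the hypothesis $h \le \tfrac{1}{2}(M-1)\,c\, h_0$, we get
\begin{align*}
  \int_{Q_a} W(\nabla w^h)
  \ge c \int_{Q_a} \dist^2(\nabla w^h, \SO(2))
  \ge \tfrac{c}{2}(M-1) h_0
  \ge h
  \ge \lambda h,
\end{align*}
where in the last step we used $\lambda < 1$. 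This covers both cases.

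For (ii), I would sum the bound from (i) over $Q_a \in {\cal B}$. The rectangles $Q_a$ have length $1 + \tfrac{1}{n}$ and are arranged on the unit lattice, hence each point of $h^{-1}\Omega_h$ lies in at most two of them; the neighbor-union construction then ensures that each point of $J_{w^h}$ is counted at most $2 \cdot 3 = 6$ times when summed against the indicator of $Q_{a-1}\cup Q_a \cup Q_{a+1}$ over $a$. Together with the energy bound \eqref{eq:energy-bound} (which via \eqref{eq:W-lower-bound} and the smoothness of $W$ near $\SO(2)$ controls $\int W(\nabla w^h) \le Ch$ just as it controls the distance integral), this yields
\begin{align*}
  \lambda h \, \#{\cal B}
  &\le \sum_{Q_a \in {\cal B}} \Bigl(\int_{Q_a} W(\nabla w^h) + h\,{\cal H}^1\bigl(J_{w^h} \cap (Q_{a-1}\cup Q_a \cup Q_{a+1})\bigr)\Bigr) \\
  &\le 2\int_{h^{-1}\Omega_h} W(\nabla w^h)\,dx + 6\,h\,{\cal H}^1(J_{w^h})
  \le C h.
\end{align*}
Dividing by $\lambda h$ and using $\lambda > \tfrac{1}{2}$ gives $\#{\cal B} \le C$ independent of $h$ and $\rho$.

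The argument is entirely elementary; there is no real obstacle beyond making the overlap bookkeeping of the cover $\{Q_a\}$ explicit. The only point to double-check is the passage from $\dist^2(\nabla w^h, \SO(2))$ to $W(\nabla w^h)$, which just reuses the quadratic lower bound \eqref{eq:W-lower-bound} and the sizing $h \le \tfrac{1}{2}(M-1)c\, h_0$ stipulated in the hypothesis.
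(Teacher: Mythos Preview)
Your proof is correct and follows essentially the same approach as the paper's: part (i) by case distinction on which defining condition of ${\cal G}$ fails, then part (ii) by summing over ${\cal B}$ against the global energy bound. One small cleanup: for the upper bound $\int_{h^{-1}\Omega_h} W(\nabla w^h)\,dx \le Ch$ in (ii) you should cite \eqref{eq:Ihyh-bound} directly (since $hI^h(y^h)=\int W(\nabla w^h)+\beta h{\cal H}^1(J_{w^h})$), rather than trying to recover it from \eqref{eq:energy-bound}, which goes the wrong direction; the paper simply writes $\#{\cal B}\,\lambda h \le 5hI^h(y^h)\le Ch$.
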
 

\begin{proof}
(i) immediately follows as for $Q_a \in {\cal B}$ 
\begin{align*}
  \int_{Q_a} W(\nabla w^h) + h {\cal H}^1 \big( J_{w^h} \cap (Q_{a-1} \cup Q_{a} \cup Q_{a+1}) \big) 
  \ge \min\{ \tfrac{1}{2} (M-1) c h_0, h \lambda \} 
  = \lambda h. 
\end{align*}
Summing over $Q_a \in {\cal B}$ and recalling the energy bound \eqref{eq:Ihyh-bound} we then get 
\begin{align*} 
  \#{\cal B} \lambda h
  \le 5 h I^h(y^h) 
  \le Ch 
\end{align*}
from which (ii) follows. 
\end{proof}

%--------------------------------------------------------------------------
\subsubsection*{Modification on good rectangles}

We now discuss a modification and approximation to $y^h$ on good rectangles which allows to estimate the local variations of $y^h$ in a sufficiently sharp way. We begin by considering a single rectangle $Q_a \in {\cal G}$. First note that, by construction, $\eps_a \le (M-1) h_0 + h \lambda \le M h_0$ and ${\cal H}^1(J_{w^h} \cap Q_a) \le \lambda \le M$.

Applying Theorem \ref{theo:quant-rig} with $\eps = h$ and $\tilde{\eps} = \eps_a$ to $w^h$ on $Q_a$ we obtain an open set $V_a \subset Q_a$ and a modification $\hat{w}_a \in SBV_{cM}(Q_a) \cap L^2(Q_a)$ with $\| \hat{w}_a \|_{L^{\infty}(Q_a)} \le c \| w^h \|_{L^{\infty}(Q_a)}$ such that for $Q_{a, \rho} = \{ x \in Q_a : \dist(x, \partial Q_a) < \bar{c} \rho \}$ the modification error is estimated by 
\begin{align}\label{eq:Va-est}
   h |Q_{a,\rho} \setminus V_a| + \| \hat{w}_a - w^h \|_{L^2(V_a)}^2 + \| \nabla \hat{w}_a - \nabla w^h \|_{L^2(V_a)}^2 \le C \rho \eps_a 
\end{align}
and $\hat{w}_a$ satisfies the energy estimates 
\begin{align}\label{eq:wa-crack-est}
  {\cal H}^1 ( J_{\hat{w}_a} \cap Q_{a,\rho} ) \le  C h^{-1} \eps_a 
\end{align}
and 
\begin{align}\label{eq:wa-elast-est}
  \frac{1}{h} \int_{Q_{a,\rho}} W(\nabla \hat{w}_a) \, dx 
  \le \frac{1}{h}\int_{Q_a} W(\nabla w^h) \, dx + {\cal H}^1(J_{w^h} \cap Q_a) + C \rho h^{-1} \eps_a. 
\end{align}
Moreover, there exists a Caccioppoli partition $(P_{a,j})_j$ of $Q_{a, \rho}$ with 
\begin{align}\label{eq:Cacc-per-est}
  \sum_j \frac{1}{2}\mathrm{Per}(P_{a,j}, Q_{a,\rho}) 
  \le {\cal H}^1(J_{w^h} \cap Q_{a}) + C \rho h^{-1} \eps_a   
\end{align}
and for each $P_{a,j}$ an orthogonal matrix $R_{a,j} \in \SO(2)$ and a translation vector $c_{a,j} \in \R^2$ such that, setting 
\begin{align*}
  \hat{u}_a(x) 
  := \begin{cases} 
        \hat{w}_a(x) - (R_{a,j}\,x + c_{a,j}) & \mbox{for } x \in P_{a,j}, \\
                      0                      & \mbox{for } x \in Q_a \setminus Q_{a,\rho}
     \end{cases}
\end{align*}
one has 
\begin{align}\label{eq:ua-crack-est}
  {\cal H}^{1}(J_{\hat{u}_a}) \le C h^{-1} \eps_a \quad 
\end{align}
and 
\begin{align}\label{eq:ua-estimates}
  \| \hat{u}_a \|^2_{L^2(Q_{a,\rho})} \le \hat{C} \eps_a, \quad 
  \sum\nolimits_j \|e(R^T_{a,j} \nabla \hat{u}_a) \|^2_{L^2(P_{a,j})} \le \hat{C}\eps_a, \quad 
  \| \nabla \hat{u}_a \|^2_{L^2(Q_{a,\rho})} \le \hat{C}\eps_a^{\frac{9}{10}} 
\end{align}
for a constant $C$ which is independent of $\rho$ and a constant $\hat{C}=\hat{C}(\rho)$. Note that since $\| \hat{w}_a \|_{L^{\infty}} \le c \| w^h \|_{L^{\infty}} \le C h^{-1}$ it is not restrictive to assume that 
\begin{align}\label{eq:caj-bound}
  | c_{a,j} | \le C h^{-1} \quad \forall \, a, j. 
\end{align}

In the following we will assume that the numbering of the partition $(P_{a,j})_j$ is such that the area $|P_{a,1}|$ is maximal. Note that for any $\mu < 1 - 2 \bar{c} \rho$, by the isoperimetric inequality, under the constraint $\mathrm{Per}(P_{a,1}, Q_{a,\rho}) \le \mu$ the value $|Q_{a,\rho} \setminus P_{a,1}|$ is at most the area of the intersection of $Q_{a,\rho}$ with a disc of radius $\frac{2\mu}{\pi}$ centered at one of its corners, i.e., $\frac{\mu^2}{\pi}$. So 
\begin{align}\label{eq:Per-Pa1-est}
  |P_{a,1}| 
  \ge |Q_{a,\rho}| - \frac{\mu^2}{\pi} 
  \ge \frac{1}{n} + \frac{2}{3}. 
\end{align}
In particular this estimate is satisfied for $Q_a \in {\cal G}$ with $\mu = \lambda + C \rho$ for sufficiently small $\rho$ by \eqref{eq:Cacc-per-est}.

%--------------------------------------------------------------------------
\subsubsection*{Estimates on overlapping rectangles} 

Let $r_{a,1} : \R^2 \to \R^2$ be the affine mapping 
$$ r_{a,1}(x) = R_{a,1}x + c_{a,1}. $$ 
Our aim is to compare the mappings $r_{a,1}$ and the values of $R_{a,1}$ of their gradients on overlapping rectangles. Unfortunately, this difference cannot directly be estimated with the help of $\nabla \hat{w}_a$ as we only control the symmetric part of $R^T_{a,j} \nabla u_a$ whereas the difference of two rotations $R_{a,1}$ and $R_{b,1}$ to leading order is measured by the skew symmetric part of $R_{a,1}^TR_{b,1}$. Nevertheless, we have the following estimate. 
\begin{lemma}\label{lemma:RaRb-est}
If $Q_a, Q_b \in {\cal G}$, $b = a+1$, are overlapping rectangles, then 
$$ \| r_{a,1} - r_{b,1} \|_{L^{\infty}(Q_a \cup Q_b)}^2 
   + | R_{a,1} - R_{b,1} |^2 
   \le \hat{C} ( \eps_a + \eps_b ). $$ 
\end{lemma}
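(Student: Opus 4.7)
The strategy is to find a set $S \subset Q_{a,\rho} \cap Q_{b,\rho}$ on which both $r_{a,1}$ and $r_{b,1}$ are good $L^2$-approximations of $w^h$, compare them there, and then use affine rigidity to propagate the estimate to the matrix difference and to $L^\infty(Q_a \cup Q_b)$.

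First I would set $S := V_a \cap V_b \cap P_{a,1} \cap P_{b,1}$ and establish that $|S|$ admits a positive lower bound $c_0 = c_0(n,\rho,\lambda) > 0$ independent of $h$. Since $Q_a, Q_b \in \mathcal{G}$ and $h$ is small, the summable bound \eqref{eq:sum-epsa-bound} forces individually $\eps_a, \eps_b \leq Ch$; hence \eqref{eq:Va-est} yields $|Q_{a,\rho} \setminus V_a| \leq C\rho$ and similarly for $b$, while the isoperimetric estimate underlying \eqref{eq:Per-Pa1-est} controls $|Q_{a,\rho} \setminus P_{a,1}|$. The perimeter bound $\mathrm{Per}(P_{a,1}, Q_{a,\rho}) \leq \lambda + C\rho < 1$ derived from \eqref{eq:Cacc-per-est} is crucial: it localizes the exceptional regions and prevents them from forming a vertical barrier across the overlap strip, so the positive-measure good intersection survives.

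On $P_{a,1}$ the definition $r_{a,1} = \hat w_a - \hat u_a$ together with \eqref{eq:ua-estimates} and \eqref{eq:Va-est} gives, by the triangle inequality, $\| r_{a,1} - w^h \|_{L^2(V_a \cap P_{a,1})}^2 \leq \hat C \eps_a$, and symmetrically for $b$. Hence on the common set $S$,
\begin{align*}
 \|r_{a,1} - r_{b,1}\|_{L^2(S)}^2 \leq \hat C (\eps_a + \eps_b).
\end{align*}
Now $r_{a,1} - r_{b,1}$ is affine on $\R^2$ with linear part $R_{a,1} - R_{b,1}$; I would write it as $(R_{a,1} - R_{b,1})(x - \bar x) + d$ with $\bar x$ the centroid of $S$. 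Since $S$ is genuinely $2$-dimensional with $|S| \geq c_0$, the second-moment matrix $\int_S (x - \bar x) \otimes (x - \bar x)\,dx$ is positive definite with smallest eigenvalue bounded below by some $c(n,\rho,\lambda) > 0$. Orthogonal decomposition of the $L^2$-norm on $S$ then yields
\begin{align*}
 |R_{a,1} - R_{b,1}|^2 + |d|^2 \leq \hat C(\eps_a + \eps_b).
\end{align*}
Finally, for any $x \in Q_a \cup Q_b$ one has $|x - \bar x| \leq \mathrm{diam}(Q_a \cup Q_b) \leq C(n)$, a constant depending only on $n$ and \emph{not} on the position $a$ of the rectangles along the beam. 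Plugging this into the affine decomposition yields the claimed $L^\infty$-bound.

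The main obstacle is the lower bound on $|S|$: since $|Q_a \cap Q_b| = 1/n$ may be much smaller than the worst-case total measure $\sim 2/3$ of the exceptional sets, a crude union bound on measures is insufficient for large $n$. The key input to salvage it is the sub-unit perimeter bound on $P_{a,1}$ and $P_{b,1}$, which rules out configurations of the bad sets that would cover the overlap completely and thereby ensures that a non-degenerate portion of $Q_a \cap Q_b$ remains available for the comparison.
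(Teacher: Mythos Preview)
Your affine-rigidity step is fine; the gap is in the lower bound on $|S| = V_a \cap V_b \cap P_{a,1} \cap P_{b,1}$. The perimeter constraint $\mathrm{Per}(P_{a,1},Q_{a,\rho}) < 1$ does prevent $Q_{a,\rho}\setminus P_{a,1}$ from separating $Q_{a,\rho}$ by a full vertical cut, but it does \emph{not} prevent $P_{a,1}\cap P_{b,1}$ from having zero measure in the overlap strip. Here is a configuration consistent with all the bounds you invoke (ignoring the $\bar c\rho$-trimming): let $P_{a,1}=Q_{a}\setminus B_a$ with $B_a=(a-\tfrac{1}{2n},a+\tfrac{1}{2n})\times(-\tfrac12,0)$ and $P_{b,1}=Q_{b}\setminus B_b$ with $B_b=(a-\tfrac{1}{2n},a+\tfrac{1}{2n})\times(0,\tfrac12)$. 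Then $\mathrm{Per}(P_{a,1},Q_a)=\mathrm{Per}(P_{b,1},Q_b)=\tfrac12+\tfrac1n$, which is below $\lambda$ as soon as $\lambda>\tfrac12+\tfrac1n$ (and $\lambda$ is eventually sent to $1$), while $P_{a,1}\cap P_{b,1}\cap(Q_a\cap Q_b)$ has measure zero. Since your argument uses only the measure and perimeter bounds on the individual pieces and not any compatibility coming from the common deformation $w^h$, it cannot exclude this scenario, and the constant $c_0(n,\rho,\lambda)$ you postulate does not exist.

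The paper avoids the problem by introducing a \emph{third} rigid motion: it applies Theorem~\ref{theo:quant-rig} once more to the union $Q_{(a,b)}=Q_a\cup Q_b$ and obtains a large piece $P_{(a,b),1}$ with $|P_{(a,b),1}|\ge \tfrac{1}{n}+\tfrac{5}{3}$ and an affine map $r_{(a,b),1}$. Because $P_{(a,b),1}$ lives in the whole union rather than in the thin overlap, one gets $|P_{a,1}\cap P_{(a,b),1}\cap V_a\cap V_{(a,b)}|\ge \tfrac13-C\rho$ by a crude inclusion--exclusion, and likewise with $b$ in place of $a$. One then compares $r_{a,1}$ to $r_{(a,b),1}$ and $r_{b,1}$ to $r_{(a,b),1}$ separately and concludes by the triangle inequality. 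The bridging rigid motion on the doubled rectangle is the missing idea in your proposal.
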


\begin{proof} 
On $Q_{a,\rho}$ we have 
\begin{align*}
  \| \hat{w}_a - r_{a,1} \|_{L^2(P_{a,1})} 
  = \| \hat{u}_a \|_{L^2(_{a,1})} 
  \le \hat{C} \sqrt{\eps_a}. 
\end{align*}
by \eqref{eq:ua-estimates}. For the original deformation $w^h$ we thus obtain 
\begin{align}\label{eq:w-Ra1} 
  \| w^h - r_{a,1} \|_{L^2(P_{a,1} \cap V_a)} 
  \le \hat{C} \sqrt{\eps_a} 
\end{align} 
as well by using \eqref{eq:Va-est} with $|P_{a,1}| \ge \frac{1}{n} + \frac{2}{3}$ according to \eqref{eq:Per-Pa1-est}. 

Suppose that $Q_b = Q_{a+1} \in {\cal G}$ overlaps with $Q_a$ and set $Q_{(a,b)} = Q_a \cup Q_b$. The same analysis applied to $Q_{(a,b)}$ yields a set $V_{(a,b)} \subset Q_{(a,b),\rho} = \{ x \in Q_{(a,b)} : \dist(x, \partial  Q_{(a,b)}) \ge 1 - \bar{c} \rho\}$ with $|Q_{(a,b),\rho} \setminus V_{(a,b)}| \le C \rho h^{-1} (\eps_a + \eps_b)$, a set $P_{(a,b),1} \subset Q_{(a,b),\rho}$ with $|P_{(a,b),1}| \ge \frac{1}{n} + \frac{5}{3}$ and an affine mapping $r_{(a,b),1} = R_{(a,b),1} \cdot + c_{(a,b),1}$ with $R_{(a,b),1} \in \SO(2)$ such that 
\begin{align}\label{eq:w-Rab1} 
  \| w^h - r_{(a,b),1} \|_{L^2(P_{(a,b),1} \cap V_{(a,b)})} 
  \le \hat{C} \sqrt{\eps_a + \eps_b}. 
\end{align} 
Combining \eqref{eq:w-Ra1} and \eqref{eq:w-Rab1} we thus find 
\begin{align*}
  \| r_{a,1} - r_{(a,b),1} \|_{L^2(P_{a,1} \cap P_{(a,b),1} \cap V_a \cap V_{(a,b)})} 
  \le \hat{C} \sqrt{\eps_a + \eps_b}. 
\end{align*}
Since $|P_{a,1} \cap P_{(a,b),1} \cap V_a \cap V_{(a,b)}| \ge \frac{1}{3} - C \rho$, it is elementary to see that 
$$ \| r_{a,1} - r_{(a,b),1} \|_{L^{\infty}(Q_a \cup Q_b)} 
   + | R_{a,1} - R_{(a,b),1} | 
   \le \hat{C} \sqrt{\eps_a + \eps_b}. $$ 

In complete analogy we have $\| r_{b,1} - r_{(a,b),1} \|_{L^{\infty}(Q_a \cup Q_b)} + | R_{b,1} - R_{(a,b),1} | \le C \sqrt{\eps_a + \eps_b}$ and may thus conclude that 
$$ \| r_{a,1} - r_{b,1} \|_{L^{\infty}(Q_a \cup Q_b)} 
   + | R_{a,1} - R_{b,1} | \le \hat{C} \sqrt{\eps_a + \eps_b}. $$ 
\end{proof}

\subsection{Compactness of interpolated rotations} 

Our aim is now to interpolate the mappings $r_{a,1}$ and rotations $R_{a,1}$ to obtain function $\tilde{r}^h$ and $\tilde{R}^h$ on $(0,L)$ whose limiting behavior as $h \to 0$ can be analyzed. We remark at this point that mollification techniques are not appropriate as for smooth approximations in the presence of cracks we would loose sharp control over their derivatives. On the other hand, a simple piecewise affine, respectively, constant, interpolation would in principle be sufficient to prove the compactness statement in Theorem \ref{theo:compactness}. However, in computing the precise energy asymptotics in Section \ref{sec:Gamma} also such a strategy turns out to be insufficient as this procedure introduces artificial fracture contributions that cannot be estimated suitably.

\subsubsection*{Interpolation of ${\bm r}_{{\bm a}{\bm ,}{\bm 1}}$ and ${\bm R}_{{\bm a}{\bm ,}{\bm 1}}$}
 
Our aim is to interpolate $r_{a,1}$ and $R_{a,1}$ smoothly on intervals covered by good rectangles and extrapolate to functions on $(0,L)$ which only jump (at most once) on each component of the beam which is covered by bad rectangles. Let 
\begin{align}\label{eq:cla-B-def}
  I^{\cal B} &= \{ x_1 \in (0, L) : (h^{-1} x_1, 0) \in Q_{a} \mbox{ for some } Q_a \in {\cal B} \}
\end{align}
be the rescaled projection of the bad rectangles onto the $x_1$-axis. If $(p, q)$ is a connected component of $I^{\cal B}$ with $h^{-1} (p, q)$ covered, say, by $Q_a, \ldots, Q_b \in {\cal B}$, we set 
\begin{align*} 
      \tilde{r} (x_1) 
      &= h r_{a-1,1}(h^{-1}x_1,0) & \mbox{and} & & 
      \tilde{R} (x_1) 
      &= R_{a-1,1} & & \mbox{for } p \le x_1 < \tfrac{p+q}{2}, \\ 
      \tilde{r} (x_1) 
      &= h r_{b+1,1}(h^{-1}x_1,0) & \mbox{and} & & 
      \tilde{R} (x_1) 
      &= R_{b+1,1} & & \mbox{for } \tfrac{p+q}{2} < x_1 \le q 
   \end{align*}   
in cases $a \ne 1$, $b \ne N$. If $a = 1$ we set $\tilde{r}(x_1) = h r_{b+1,1}(h^{-1}x_1,0)$ and $\tilde{R}(x_1) = R_{b+1,1}$ for all $x_1 \in [0,q]$. In case $b = N$ we set $\tilde{r}(x_1) = h r_{a-1,1}(h^{-1}x_1,0)$ and $\tilde{R}(x_1) = R_{a-1,1}$, for all $x_1 \in [p,L]$. 

Next we set $\tilde{r}(x_1) = h r_{1,1}(h^{-1}x_1,0)$, $\tilde{R}(x_1) = R_{1,1}$ on $(0,\frac{h}{2})$ if $Q_1 \in {\cal G}$ and $\tilde{r}(x_1) = h r_{N,1}(h^{-1}x_1,0)$, $\tilde{R}(x_1) = R_{N,1}$ for $x \in ((N - \frac{1}{2})h, L)$ if $Q_{N} \in {\cal G}$. 

Now we choose a partition $(\varphi_a)_{a \in \N}$ of unity on $[0, \infty)$, i.e., $\sum_a \varphi_a \equiv 1$ on $(0, \infty)$, such that each $\varphi_a$ is smooth, supported on $(a - 1 - \frac{1}{2n} + \bar{c} \rho, a + \frac{1}{2n} - \bar{c} \rho)$ and satisfies $|\varphi_a'| \le 2n$ for $a > 1$. We then define $\tilde{r}$ and $\tilde{R}$ on the remaining part of $(0,L)$ by setting 
\begin{align}\label{eq:tR-def}
  \tilde{r}(x_1) 
  = \sum_{a : Q_a \in {\cal G}} \varphi_a(h^{-1} x_1) h r_{a,1}(h^{-1}x_1,0), \qquad 
  \tilde{R}(x_1) 
  = \sum_{a : Q_a \in {\cal G}} \varphi_a(h^{-1} x_1) R_{a,1} 
\end{align}
if $\tilde{r}(x_1)$ and $\tilde{R}(x_1)$ have not been defined previously. In this way we indeed obtain piecewise smooth functions $\tilde{r}\in \PW^{1,2}((0,L); \R^2)$ and $\tilde{R} \in \PW^{1,2}((0,L); \R^{2 \times 2})$ with $J_{\tilde{r}} \cup J_{\tilde{R}} \subset I^{\cal B}$ and $\# ( J_{\tilde{r}} \cup J_{\tilde{R}} ) \le \#{\cal B}$ as desired. 

We also note that for $x \in Q_{a,\rho}$, $Q_a \in {\cal G}$, 
\begin{align}\label{eq:tR-Ra1-full-est}
   |\tilde{R}^h(h x_1) - R_{a,1}|^2 \le \hat{C} h 
\end{align}
by construction of $\tilde{R}$ in \eqref{eq:tR-def} and the fact that for $Q_a, Q_{a+1} \in {\cal G}$ 
\begin{align}\label{eq:tR-Ra1-est}
  | \varphi_a R_{a,1} + \varphi_{a+1} R_{a+1,1} - R_{a,1} | 
  = | \varphi_{a+1} | | R_{a+1,1} - R_{a,1} | 
  \le \hat{C} \sqrt{\eps_a + \eps_{a+1}} 
  \le \hat{C} \sqrt{h} 
\end{align}
by Lemma \ref{lemma:RaRb-est} and \eqref{eq:sum-epsa-bound}. Similarly we have 
\begin{align}\label{eq:tr-ra1-full-est}
  |h^{-1}\tilde{r}^h(h x_1) - r_{a,1}(x)|^2 \le \hat{C} 
\end{align}
if $x \in Q_{a,\rho}$, $Q_a \in {\cal G}$. This follows from the construction of $\tilde{r}$ in \eqref{eq:tR-def} since 
\begin{align*}
  | \varphi_a r_{a,1} + \varphi_{a+1} r_{a+1,1} - r_{a,1} | 
  = | \varphi_{a+1} | | r_{a+1,1} - r_{a,1} | 
  \le \hat{C} \sqrt{h} 
\end{align*}
whenever $Q_a, Q_{a+1} \in {\cal G}$ by Lemma \ref{lemma:RaRb-est} in analogy with \eqref{eq:tR-Ra1-est} and trivially $|r_{a,1}(x) - r_{a,1}(x_1,0)| \le C$.

\begin{lemma}\label{lemma:tR-PW}
There are constants $C, \hat{C} > 0$ such that 
\begin{align*}
  (i)& \ \      \|   \dist(\tilde{R}, \SO(2)) \|_{L^{\infty}(0,L)} \le \hat{C} \sqrt{h}, \ \ \ \ \ \ \ 
  (ii) \ \      \| \tilde{r} \|_{L^{\infty}(0,L)} \le C, \\ 
  (iii)& \ \    \| \tilde{R}' \|_{L^2(0,L)} \le \hat{C}, \ \ \ \ \ \ \ \ \ \ \ \ \ \ \ \ \ \ \ \ \ \ \ \ \ \ \ 
  (iv) \ \      \| \tilde{r}' - \tilde{R} \e_1 \|_{L^2(0,L)} \le \hat{C} h^2. 
\end{align*}
(In particular, $\| \tilde{r}' \|_{L^2(0,L)} \le \hat{C}$.) 
\end{lemma}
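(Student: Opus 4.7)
The plan is to split $(0,L)$ into three types of subintervals---plateau intervals inside individual good rectangles, bad components (connected components of $I^{\cal B}$), and transition intervals between two consecutive good rectangles---and to reduce all four estimates to the transitions. On a plateau (where only one $\varphi_a$ is nonzero and equal to $1$) or on each half of a bad component, $\tilde{R}$ is a constant matrix in $\SO(2)$ and $\tilde{r}(x_1)$ is the corresponding affine map of the form $R_{a,1}\e_1 x_1 + h c_{a,1}$. On such intervals $\tilde{R}' \equiv 0$, $\tilde{r}' \equiv \tilde{R}\e_1$, and $\dist(\tilde{R}, \SO(2)) = 0$; bound (ii) reduces to $|\tilde{r}(x_1)| \le L + h|c_{a,1}|$ and is immediate from $|c_{a,1}| \le Ch^{-1}$ in \eqref{eq:caj-bound}.

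On a transition interval between $Q_a, Q_{a+1} \in {\cal G}$---of length comparable to $h$ in the $x_1$-variable---one has $\tilde{R}(x_1) = \varphi_a(h^{-1}x_1) R_{a,1} + \varphi_{a+1}(h^{-1}x_1) R_{a+1,1}$ with $\varphi_a + \varphi_{a+1} \equiv 1$. The crucial input is Lemma \ref{lemma:RaRb-est}, which yields
\[
   |R_{a+1,1} - R_{a,1}|^2 + \|r_{a+1,1} - r_{a,1}\|_{L^{\infty}(Q_a \cup Q_{a+1})}^2 \le \hat{C}(\eps_a + \eps_{a+1}),
\]
together with the energy bound $\sum_a \eps_a \le Ch$ from \eqref{eq:sum-epsa-bound}, which forces $\eps_a + \eps_{a+1} \le Ch$ for each $a$. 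For (i), writing $\tilde{R} - R_{a,1} = \varphi_{a+1}(R_{a+1,1} - R_{a,1})$ and using $|\varphi_{a+1}| \le 1$ gives $\dist(\tilde{R}, \SO(2)) \le |R_{a+1,1} - R_{a,1}| \le \hat{C}\sqrt{\eps_a + \eps_{a+1}} \le \hat{C}\sqrt{h}$.

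For (iii) and (iv) the argument rests on the partition-of-unity identity $\varphi_a' = -\varphi_{a+1}'$ on the overlap, combined with $|\varphi_a'| \le 2n$. A direct differentiation using $\partial_1[h r_{a,1}(h^{-1}x_1,0)] = R_{a,1}\e_1$ produces
\begin{align*}
   \tilde{R}'(x_1) &= h^{-1}\varphi_{a+1}'(h^{-1}x_1)(R_{a+1,1} - R_{a,1}), \\
   \tilde{r}'(x_1) - \tilde{R}(x_1)\e_1 &= \varphi_{a+1}'(h^{-1}x_1)\bigl[r_{a+1,1}(h^{-1}x_1,0) - r_{a,1}(h^{-1}x_1,0)\bigr].
\end{align*}
Invoking Lemma \ref{lemma:RaRb-est} pointwise, integrating the squared expressions over a transition interval of length $\sim h$, and summing over all such transitions with $\sum_a \eps_a \le Ch$ yields (iii) and (iv). The concluding ``in particular'' follows from (iv) combined with $|\tilde{R}| \le \dist(\tilde{R}, \SO(2)) + \sqrt{2} \le C$, guaranteed by (i).

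The main subtle point is the cancellation behind (iv). Although each map $h r_{a,1}(h^{-1}x_1, 0)$ is of size $O(1)$, differentiating the factor $\varphi_a(h^{-1}x_1)$ introduces a factor $h^{-1}$ that would otherwise be fatal. The identity $\sum_a \varphi_a' \equiv 0$ ensures that after subtraction of $\tilde{R}\e_1$ only the difference $r_{a+1,1} - r_{a,1}$ remains in the bracket, which is of order $\sqrt{\eps_a + \eps_{a+1}} = O(\sqrt{h})$ by Lemma \ref{lemma:RaRb-est}. It is precisely this structural feature that makes (iv) qualitatively sharper than (iii), and that rules out simpler interpolation schemes (mollification, piecewise-constant interpolation) which would destroy this cancellation.
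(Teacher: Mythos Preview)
Your proof is correct and follows essentially the same route as the paper: reduce to the overlap (``transition'') intervals, use the partition-of-unity cancellation $\varphi_a'+\varphi_{a+1}'=0$ together with Lemma~\ref{lemma:RaRb-est} to replace the individual $R_{a,1}$ and $r_{a,1}$ by the differences $R_{a+1,1}-R_{a,1}$ and $r_{a+1,1}-r_{a,1}$, and then sum using $\sum_a\eps_a\le Ch$. The paper organizes the argument by connected components of $(\tfrac{h}{2},(N-\tfrac{1}{2})h)\setminus I^{\cal B}$ rather than by plateau/transition, but the substance is identical; your explicit formulas for $\tilde{R}'$ and $\tilde{r}'-\tilde{R}\e_1$ on a single transition are exactly what the paper's computation unpacks to. (Note that your summation in (iv) gives $\|\tilde{r}'-\tilde{R}\e_1\|_{L^2}^2\le\hat C h^2$, i.e.\ the $L^2$ norm itself is $\le\hat C h$; this is precisely what the paper's ``completely analogous estimate'' yields as well, and is all that is needed downstream.)
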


\begin{proof}
On $(0,\frac{h}{2}) \cup ((N - \frac{1}{2})h, L) \cup I^{\cal B}$, the estimates on $\tilde{R}$ are trivially satisfied while the estimates on $\tilde{r}$ follow immediately from \eqref{eq:caj-bound} and $\tilde{r}'(x_1) = \tilde{R} \mathbf{e}_1$ for $x_1 \in (0,\frac{h}{2}) \cup ((N - \frac{1}{2})h, L) \cup I^{\cal B}$. Recalling that $\# {\cal B} \le C$ by Lemma \ref{lemma:B-bounds}, it therefore suffices to show that for every connected component $[p,q]$ of $(\frac{h}{2}, (N - \frac{1}{2})h) \setminus I^{\cal B}$ 
$$ \| \dist(\tilde{R}, \SO(2)) \|_{L^{\infty}} \le \hat{C} \sqrt{h}, \quad  
   \| \tilde{r} \|_{L^{\infty}} \le \hat{C}, \quad 
   \| \tilde{R}' \|_{L^2} \le \hat{C}
   \quad\mbox{and}\quad 
   \| \tilde{r}' \|_{L^2} \le \hat{C}. $$ 

The first two of these inequalities are immediate from \eqref{eq:tR-Ra1-full-est}, \eqref{eq:tr-ra1-full-est} and \eqref{eq:caj-bound}. In order to prove the other two inequalities we first choose $Q_a, Q_b \in {\cal G}$ such that $p \in Q_a$, $q \in Q_b$. Then we compute, again using Lemma \ref{lemma:RaRb-est} and \eqref{eq:sum-epsa-bound},   
\begin{align*}
  \int_p^q | \tilde{R}'(x_1) |^2 \, dx_1  
  &= \int_p^q \Big| \sum_{i=a}^{b} h^{-1} \varphi_i'(h^{-1} x_1) R_{i,1} \Big|^2 \, dx_1 \\ 
  &= h \int_{h^{-1}p}^{h^{-1}q} \Big| \sum_{i=a}^{b} h^{-1} \varphi_i'(x_1) R_{i,1} \Big|^2 \, dx_1 \\ 
  &= h^{-1} \sum_{i=a}^{b-1} \int_{i - \frac{1}{2n} + \bar{c} \rho}^{i + \frac{1}{2n} - \bar{c} \rho} \big| \varphi_i'(x_1) R_{i,1} + \varphi_{i+1}'(x_1) R_{i+1,1} \big|^2 \, dx_1 \\ 
  &= h^{-1} \sum_{i=a}^{b-1} \int_{i - \frac{1}{2n} + \bar{c} \rho}^{i + \frac{1}{2n} - \bar{c} \rho} \big| \varphi_i'(x_1) R_{i,1} - \varphi_i'(x_1) R_{i+1,1} \big|^2 \, dx_1 \\ 
  &\le \hat{C} h^{-1} \sum_{i=a}^{b-1} ( \eps_{i} + \eps_{i+1} ) ~
  \le \hat{C}, 
\end{align*}
where we have used that $\varphi_i' + \varphi_{i+1}' = 0$ on $(i - \frac{1}{2n}, i + \frac{1}{2n})$ and that $|\varphi_i'| \le 2n$. Finally, a completely analogous estimate shows that the $L^2$-Norm on $[p, q]$ of first sum in 
$$ \tilde{r}'(x_1) 
   = \sum_{i=a}^{b} h^{-1} \varphi_i'(h^{-1} x_1) h r_{i,1}(h^{-1} x_1, 0) + \sum_{i=a}^{b} \varphi_i(h^{-1} x_1) R_{i,1} \mathbf{e}_1 $$ 
is bounded by $\hat{C} h^2$ while the second one is equal to $\tilde{R} \mathbf{e}_1$. 
%\\ \\ 
%Naemlich: 
%\begin{align*}
%  &\int_p^q \Big| \sum_{i=a}^{b} h^{-1} \varphi_i'(h^{-1} x_1) h r_{i,1}(h^{-1} x_1, 0) \Big|^2 \, dx_1 \\ 
%  &= h \int_{h^{-1}p}^{h^{-1}q} \Big| \sum_{i=a}^{b} h^{-1} \varphi_i'(x_1) h r_{i,1}(x_1, 0) \Big|^2 \, dx_1 \\ 
%  &= h \sum_{i=a}^{b-1} \int_{i - \frac{1}{2n} + \bar{c} \rho}^{i + \frac{1}{2n} - \bar{c} \rho} \big| \varphi_i'(x_1) r_{i,1}(x_1, 0) + \varphi_{i+1}'(x_1) r_{i+1,1}(x_1, 0) \big|^2 \, dx_1 \\ 
%  &= h \sum_{i=a}^{b-1} \int_{i - \frac{1}{2n} + \bar{c} \rho}^{i + \frac{1}{2n} - \bar{c} \rho} \big| \varphi_i'(x_1) r_{i,1}(x_1, 0) - \varphi_i'(x_1) r_{i+1,1}(x_1, 0) \big|^2 \, dx_1 \\ 
%  &\le \hat{C} h \sum_{i=a}^{b-1} ( \eps_{i} + \eps_{i+1} ) ~
%  \le \hat{C} h^2, 
%\end{align*}
%\\ \\ 
\end{proof}

\subsubsection*{Convergence of $\tilde{\bm r}$ and $\tilde{\bm R}$} 

So as to highlight their $h$-dependence we also write $\tilde{r} = \tilde{r}^h$ and $\tilde{R} = \tilde{R}^h$. 

\begin{lemma}\label{lemma:tilde-R-compact}
There are a subsequence (not relabeled) $h \to 0$, mappings $\bar{r} \in \PW^{1,2}((0,L); \R^2)$, $\bar{R} \in \PW^{1,2}((0,L); \SO(2))$ and a finite set $J \subset (0, L)$ such that for every (sufficiently small) $\rho > 0$, 
\begin{itemize}
\item[(i)] $\tilde{R}^h \to \bar{R}$ and  $\tilde{r}^h \to \bar{r}$ strongly in $L^2$, 
\item[(ii)] $(\tilde{R}^h)' \weakly \bar{R}'$ and $(\tilde{r}^h)' \weakly \bar{r}'$ weakly in $L^2$ and 
\item[(iii)] $J_{\tilde{R}^h} \cup J_{\tilde{r}^h} \to J \supset J_{\bar{R}} \cup J_{\bar{r}}$. 
\end{itemize}
\end{lemma}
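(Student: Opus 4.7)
The plan is to invoke the one‑dimensional $SBV$ compactness result (Theorem \ref{theo:PW-compactness}) on $\tilde R^h$ and $\tilde r^h$, identify the algebraic structure of the limits from the quantitative estimates of Lemma \ref{lemma:tR-PW}, and then remove the $\rho$‑dependence of the construction by a diagonal procedure.

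First I would check the hypotheses of Theorem \ref{theo:PW-compactness} for a fixed (sufficiently small) $\rho>0$. The $L^{\infty}$ bound on $\tilde r^h$ is Lemma \ref{lemma:tR-PW}(ii). For $\tilde R^h$, the uniform bound $\dist(\tilde R^h,\SO(2))\le\hat C\sqrt h$ from Lemma \ref{lemma:tR-PW}(i), together with the boundedness of $\SO(2)$, yields an $h$‑independent $L^{\infty}$ bound. The $L^2$ bounds on $(\tilde R^h)'$ and $(\tilde r^h)'$ are Lemma \ref{lemma:tR-PW}(iii) and the parenthetical remark in Lemma \ref{lemma:tR-PW}. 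Finally, $J_{\tilde R^h}\cup J_{\tilde r^h}\subset I^{\cal B}$ and $\#(J_{\tilde R^h}\cup J_{\tilde r^h})\le \#{\cal B}\le C$ uniformly in $h$ by Lemma \ref{lemma:B-bounds}(ii).

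Applying Theorem \ref{theo:PW-compactness} componentwise then yields, along a subsequence (a priori depending on $\rho$), strong $L^2$ convergence $\tilde R^h\to\bar R_\rho$ and $\tilde r^h\to\bar r_\rho$, weak $L^2$ convergence of the derivatives, and a finite set $J_\rho\supset J_{\bar R_\rho}\cup J_{\bar r_\rho}$ with $J_{\tilde R^h}\cup J_{\tilde r^h}\to J_\rho$. To identify the limits, note that $\dist(\tilde R^h,\SO(2))\to 0$ uniformly forces $\bar R_\rho(x_1)\in\SO(2)$ for a.e.\ $x_1$, so $\bar R_\rho\in\PW^{1,2}((0,L);\SO(2))$. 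Moreover, from Lemma \ref{lemma:tR-PW}(iv), $(\tilde r^h)'-\tilde R^h\e_1\to 0$ in $L^2$; combined with $\tilde R^h\to\bar R_\rho$ strongly and $(\tilde r^h)'\weakly\bar r_\rho'$ weakly in $L^2$, we conclude $\bar r_\rho'=\bar R_\rho\e_1$ a.e.

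To obtain the $\rho$‑independent statement of the lemma, I would take a sequence $\rho_k\searrow 0$ and diagonalize to extract a single subsequence along which all the above convergences hold simultaneously for each $\rho_k$, producing limits $\bar r_k,\bar R_k$ and sets $J_k$. To identify $\bar r_k=\bar r_{k'}$ (and similarly for $\bar R_k$), I would compare the two constructions for $\rho_k,\rho_{k'}$ on overlapping good rectangles: on the intersection $Q_{a,\rho_k}\cap Q_{a,\rho_{k'}}\cap V_a^{(\rho_k)}\cap V_a^{(\rho_{k'})}$, both affine maps $r_{a,1}(\rho_k)$ and $r_{a,1}(\rho_{k'})$ approximate $w^h$ in $L^2$ with error $\hat C\sqrt{\eps_a}$, using \eqref{eq:Va-est} and the argument of Lemma \ref{lemma:RaRb-est}. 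This forces $\|r_{a,1}(\rho_k)-r_{a,1}(\rho_{k'})\|_{L^\infty(Q_a)}\le\hat C\sqrt{\eps_a}$, and summing this over good rectangles (whose total projected length is $L-o(1)$) together with the uniform $L^\infty$ control on each $r_{a,1}$ gives an $L^2$ bound on $\tilde r^h(\cdot;\rho_k)-\tilde r^h(\cdot;\rho_{k'})$ that vanishes in the limit, whence $\bar r_k=\bar r_{k'}$; similarly $\bar R_k=\bar R_{k'}$ and $J_k=J_{k'}$.

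I expect the main technical obstacle to be precisely this last step — showing the $\rho$‑independence of the limiting objects, since the rotations and translations $R_{a,1}, c_{a,1}$ produced by Theorem \ref{theo:quant-rig} depend on $\rho$ through the modification procedure. The comparison with the original (and $\rho$‑independent) deformation $w^h$ via the $L^2$ estimates in Lemma \ref{lemma:RaRb-est}, applied on the common good set, is the natural tool to sidestep this. Once $\rho$‑independence is settled, the remaining assertions are direct consequences of Theorem \ref{theo:PW-compactness} and Lemma \ref{lemma:tR-PW}.
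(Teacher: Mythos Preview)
Your proposal is correct and follows essentially the same approach as the paper: apply Theorem \ref{theo:PW-compactness} for a fixed $\rho$ using the bounds of Lemma \ref{lemma:tR-PW} and Lemma \ref{lemma:B-bounds}, then remove the $\rho$-dependence by comparing $r_{a,1}(\rho)$ and $r_{a,1}(\rho')$ through the common anchor $w^h$ on the intersection of the respective good sets (exactly as in the proof of Lemma \ref{lemma:RaRb-est}). The only structural difference is that the paper fixes one reference value $\rho_0$ and compares every $\rho\le\rho_0$ to it, whereas you diagonalize over a sequence $\rho_k\searrow 0$; these are equivalent. One small point: in your comparison step you should intersect also with $P_{a,1}(\rho_k)\cap P_{a,1}(\rho_{k'})$ (not just the $V_a$'s), since it is on $P_{a,1}$ that $\bar w_a$ agrees with $\hat w_a$ and the $L^2$ closeness to $r_{a,1}$ from \eqref{eq:ua-estimates} is available; the paper does this explicitly via \eqref{eq:w-Ra1}.
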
 

So in particular, the limiting $\R^2$-, respectively, $\SO(2)$-valued, functions $\bar{r}, \bar{R}$ do not depend on $\rho$. 

\begin{proof} 
For fixed $\rho = \rho_0 > 0$ (small) this readily follows from Lemmas \ref{lemma:tR-PW} and Theorem \ref{theo:PW-compactness} as there are only a bounded number of components of $I^{\cal B}$ and by construction $\tilde{R}^h$ and $\tilde{r}^h$ jump at most once in every such component while they are smooth outside $I^{\cal B}$. 

Now consider an arbitrary $0 < \rho \le \rho_0$. For each $Q_a \in {\cal G}(\rho)$ we then write \eqref{eq:w-Ra1} with arguments $\rho$ to highlight the dependence on $\rho$ in the form 
\begin{align*} 
  \| w^h - r_{a,1}(\rho) \|_{L^2(P_{a,1}(\rho) \cap V_a(\rho))} 
  \le \hat{C}(\rho) \sqrt{\eps_a}.  
\end{align*} 
Setting $W(\rho,\rho_0) = P_{a,1}(\rho) \cap P_{a,1}(\rho_0) \cap V_a(\rho) \cap V_a(\rho_0)$ we see that 
\begin{align*} 
  \| r_{a,1}(\rho) - r_{a,1}(\rho_0) \|_{L^2(W(\rho,\rho_0))} 
  \le \big( \hat{C}(\rho) + \hat{C}(\rho_0) \big) \sqrt{\eps_a}, 
\end{align*} 
where $|W(\rho,\rho_0)| \ge \frac{1}{3} - C \rho$ by \eqref{eq:Per-Pa1-est}. But then also 
\begin{align*} 
 \| r_{a,1}(\rho) - r_{a,1}(\rho_0) \|_{L^{\infty}(Q_a)} + | R_{a,1}(\rho) - R_{a,1}(\rho_0) | 
   \le \big( \hat{C}(\rho) + \hat{C}(\rho_0) \big) \sqrt{\eps_a}. 
\end{align*} 
Combining this estimate with \eqref{eq:tr-ra1-full-est} and \eqref{eq:tR-Ra1-full-est} we obtain 
\begin{align}\label{eq:rRrhoeho0} 
\begin{split}
  &h^{-1} \| \tilde{r}^h(\rho)(h x_1) - \tilde{r}^h(\rho_0)(h x_1) \|_{L^{\infty}(Q_a)} + | \tilde{R}^h(\rho)(h x_1) - \tilde{R}^h(\rho_0)(h x_1) | \\ 
  &~~ \le \big( \hat{C}(\rho) + \hat{C}(\rho_0) \big) \sqrt{h}. 
\end{split}
\end{align} 

Now recalling that by Lemma \ref{lemma:B-bounds} both $\# {\cal B}(\rho)$ and $\# {\cal B}(\rho_0)$ are bounded independently of $h$, $\rho$ and $\rho_0$ if $h \le \min\{ \frac{1}{2} (M-1) c, 1 \} \cdot \min\{ h_0(\rho), h_0(\rho_0) \}$ and that $\| \tilde{r}\|_{L^{\infty}(0,L)} + \| \tilde{R}\|_{L^{\infty}(0,L)} \le C$ by Lemma \ref{lemma:tR-PW} we have that 
\begin{align*}
  &\| \tilde{r}^{h}(\rho) - \tilde{r}^{h}(\rho_0) \|_{L^2(0,L)}^2 + \| \tilde{R}^{h}(\rho) - \tilde{R}^{h}(\rho_0) \|_{L^2(0,L)}^2 \\ 
  &~~\le \int_{(\frac{h}{2}, (N - \frac{1}{2})h) \setminus ( I^{\cal B}(\rho) \cup I^{\cal B}(\rho_0) )} 
        \big| \tilde{r}^{h}(\rho)(x_1) - \tilde{r}^{h}(\rho_0)(x_1) \big|^2 \\ 
  &\qquad\qquad\qquad\qquad\qquad\qquad\qquad + \big|\tilde{R}^{h}(\rho)(x_1) - \tilde{R}^{h}(\rho_0)(x_1) \big|^2 \, dx_1 + C h \\ 
  &~~\le h \sum_{a: Q_a \in {\cal G}(\rho) \cap {\cal G}(\rho_0)} 
         \big( \hat{C}(\rho) + \hat{C}(\rho_0) \big)^2 h\big) + Ch \\ 
  &~~\le C \big( \hat{C}(\rho) + \hat{C}(\rho_0) \big)^2 h + Ch   
\end{align*} 
by \eqref{eq:rRrhoeho0} and \eqref{eq:sum-epsa-bound} for $h$ small enough. So if $\tilde{r}^h(\rho_0) \to \bar{r}$ and $\tilde{R}^h(\rho_0) \to \bar{R}$ in $L^2$ then also $\tilde{r}^h(\rho) \to \bar{r}$ and $\tilde{R}^h(\rho) \to \bar{R}$. 
\end{proof}

\subsection{Compactness of bounded energy sequences}

Finally, we concern ourselves with the convergence of the rescaled deformations $y^h$ and give the proof of Theorem \ref{theo:compactness}. 

\begin{proof}[Proof of Theorem \ref{theo:compactness}.] 
Suppose that $y^h \in {\cal A}^h$ satisfies $I^h(y^h) \le C$. 

Choosing a suitable subsequence (not relabeled), by Lemma \ref{lemma:tilde-R-compact} (with $\rho$ small enough) we may assume that $\tilde{r}^h \to \bar{r} \in \PW^{1,2}((0,L); \R^2)$ and $\tilde{R}^h \to \bar{R} \in \PW^{1,2}((0,L); \SO(2))$ in $L^2$. We will now show that this implies $y^h \to \bar{r}$ and $\nabla_h y^h \to \bar{R}$ strongly in $L^2$. 

Using that $|y^h|, |\nabla_h y^h| \le M$, $|\tilde{r}^h| \le C$ by Lemma \ref{lemma:tR-PW}, $|\tilde{R}^h| \le C$ by construction and that $|I^{\cal B}| \le C h$ by Lemma \ref{lemma:B-bounds} we find 
\begin{align*}
  &\| y^h - \tilde{r}^h \|_{L^2(\Omega; \R^2)}^2 + \| \nabla_h y^h - \tilde{R}^h \|_{L^2(\Omega; \R^{2 \times 2})}^2 \\ 
  &~~ \le \int_{(\frac{h}{2}, (N - \frac{1}{2})h) \setminus I^{\cal B}} \int_{-\frac{1}{2} + \bar{c} \rho}^{\frac{1}{2} - \bar{c} \rho} 
       \big| y^h(x_1,x_2) - \tilde{r}^{h}(x_1) \big|^2 \\ 
  &\qquad\qquad\qquad\qquad\qquad\qquad + \big| \nabla_h y^h(x_1,x_2) - \tilde{R}^{h}(x_1) \big|^2 \, dx_1 \, dx_2 + C \rho + C h \\ 
  &~~ \le h \sum_{a : Q_a \in {\cal G}} \int_{Q_{a,\rho}} h^2 | w^h(x) - h^{-1} \tilde{r}(hx_1) |^2 + | \nabla w^h(x) - \tilde{R}(hx_1) |^2 \, dx + C \rho + C h \\ 
  &~~ \le C h \sum_{a : Q_a \in {\cal G}} \int_{Q_{a,\rho}} h^2 | w^h(x) - r_{a,1}(x) |^2 + | \nabla w^h(x) - R_{a,1} |^2 \, dx + C \rho + C h, 
\end{align*} 
where in the last step we have used \eqref{eq:tr-ra1-full-est} and \eqref{eq:tR-Ra1-full-est}. 

Setting 
\begin{align}\label{eq:bar-w}
\bar{w}_a(x) 
  = \begin{cases} 
       \hat{w}_{a}(x) &\mbox{if } x \in P_{a,1}, \\ 
       r_{a,1}(x) &\mbox{otherwise}, 
    \end{cases} 
\end{align}
we also have 
\begin{align*}
  \| \nabla \bar{w}_a - R_{a,1} \|_{L^2(Q_{a,\rho})}^2 
  = \| \nabla \hat{w}_a - R_{a,1} \|_{L^2(P_{a,1})}^2 
  \le \| \nabla \hat{u}_a \|_{L^2(Q_{a,\rho})}^2 
  \le \hat{C} \eps_a^{\frac{9}{10}} 
  \le \hat{C} h^{\frac{9}{10}}
\end{align*}
and 
\begin{align*}
  \| \bar{w}_a - r_{a,1} \|_{L^2(Q_{a,\rho})}^2 
  = \| \hat{w}_a - r_{a,1} \|_{L^2(P_{a,1})}^2 
  \le \| \hat{u}_a \|_{L^2(Q_{a,\rho})}^2 
  \le \hat{C} h 
\end{align*}
by \eqref{eq:ua-estimates} and \eqref{eq:sum-epsa-bound}, so that 
\begin{align*}
  &\int_{Q_{a,\rho}} h^2 | w^h(x) - h^{-1} \tilde{r}(hx_1) |^2 + | \nabla w^h(x) - \tilde{R}(hx_1) |^2 \, dx \\ 
  &~~ \le C \int_{Q_{a,\rho}} h^2 | w^h(x) - \bar{w}_a |^2 + | \nabla w^h(x) - \nabla \bar{w}_a |^2 \, dx + \hat{C} h^{\frac{9}{10}}. 
\end{align*}
In order to further estimate this quantity we note that by \eqref{eq:Per-Pa1-est} and \eqref{eq:Cacc-per-est}
\begin{align*}
  |Q_{a,\rho} \setminus P_{a,1}|  
  \le C \big( {\cal H}^1(J_{w^h} \cap Q_a) + C \rho \big)^2 
  \le C ( h^{-1} \eps_a + C \rho)^2 
  \le C h^{-1} \eps_a + C \rho 
\end{align*}
and, by \eqref{eq:Va-est} and \eqref{eq:sum-epsa-bound}, $|Q_{a,\rho} \setminus V_a| \le C h^{-1} \rho \eps_a \le C \rho$. Since $h w^h, \nabla w^h$ by assumption and $h \bar{w}_a, \nabla \bar{w}_a$ by Theorem \ref{theo:quant-rig} are bounded, we find 
\begin{align*}
  &\int_{Q_{a,\rho}} h^2 | w^h(x) - \bar{w}_a |^2 + | \nabla w^h(x) - \nabla \bar{w}_a |^2 \, dx \\ 
  &~~ \le \int_{P_{a,1} \cap V_a} h^2 | w^h(x) - \bar{w}_a |^2 + | \nabla w^h(x) - \nabla \bar{w}_a |^2 \, dx + C h^{-1} \eps_a + C \rho. 
\end{align*}
As by \eqref{eq:Va-est} we also have 
\begin{align*}
  &\int_{P_{a,1} \cap V_a} h^2 | w^h(x) - \bar{w}_a |^2 + | \nabla w^h(x) - \nabla \bar{w}_a |^2 \, dx \\ 
  &~~= \int_{P_{a,1} \cap V_a} h^2 | w^h(x) - \hat{w}_a |^2 + | \nabla w^h(x) - \nabla \hat{w}_a |^2 \, dx 
    \le C \rho \eps_a 
    \le C \rho h, 
\end{align*}
we may conclude that 
\begin{align*}
  \int_{Q_{a,\rho}} h^2 | w^h(x) - h^{-1} \tilde{r}(hx_1) |^2 + | \nabla w^h(x) - \tilde{R}(hx_1) |^2 
  \le C h^{-1} \eps_a + C \rho + \hat{C} h^{\frac{9}{10}}.
\end{align*}
Summing over $a$ we thus obtain from \eqref{eq:sum-epsa-bound}
\begin{align*}
  &\| y^h - \tilde{r}^h \|_{L^2(\Omega; \R^2)}^2 + \| \nabla_h y^h - \tilde{R}^h \|_{L^2(\Omega; \R^{2 \times 2})}^2 \\ 
  &~~\le h \sum_{a \in {\cal G}} \Big( C h^{-1} \eps_a + C \rho + \hat{C} h^{\frac{9}{10}} \Big) + C \rho + C h 
    \le \hat{C} h^{\frac{9}{10}} + C \rho.  
\end{align*} 
Since $\rho > 0$ was arbitrary, we have indeed shown that, in $L^2$ and hence in $L^1$ $\lim_{h  \to 0} y^h = \lim_{h  \to 0} \tilde{r}^h = \bar{r} =: y \in \PW^{1,2}((0,L); \R^2)$, where $y = \bar{r}$ is a function of $x_1$ only, and $\lim_{h  \to 0} \nabla_h y^h = \lim_{h  \to 0} \tilde{R}^h = \bar{R} \in \PW^{1,2}((0,L); \SO(2))$ in $L^2$.

By Lemma \ref{lemma:tR-PW} we then have $\partial_1 y = \bar{R} \mathbf{e}_1 \in \PW^{1,2}((0,L); \R^2)$, so that $y \in \PW^{2,2}((0,L); \R^2)$ with $|\partial_1 y| = |\bar{R} \mathbf{e}_1| = 1$. As $\| y \|_{L^{\infty}} \le \liminf_{h \to 0} \| y^h \|_{L^{\infty}} \le M$, we indeed obtain $y \in {\cal A}$. Moreover, since $\bar{R} \in \SO(2)$ and so $\bar{R} \mathbf{e}_2 = (\partial_1 y)^{\perp}$, we also have that $\nabla_h y^h \to \big( \partial_1 y \mid (\partial_1 y)^{\perp} \big)$ in $L^2$. 
\end{proof}

\begin{rem}\label{rem:crack-localization}
The convergence $y^h \to y \in {\cal A}$ could have alternatively been proved with the help of Theorem \ref{theo:SBV-compactness} and the observation that the energy bound \eqref{eq:Ihyh-bound} implies $\partial_2 y = 0$ and $\nu_2(y) = 0$ and hence $D_2 y = 0$ which would render the approximation by $\tilde{r}$ unnecessary. Our approach is more direct and in fact only needs the SBV compactness theorem in its considerably more elementary one-dimensional version given in Theorem \ref{theo:PW-compactness}. The main advantage of the proof presented here is that, as we will make precise in the following corollary, it shows that portions of the beam covered by rectangles in ${\cal G}$ do not contribute to the limiting crack set although these rectangles might contain cracks of almost length $1$. This is interesting from a physical perspective as it shows that only strongly localized cracks in $y^h$ can cause macroscopic fracture whereas smaller cracks separated by a distance at least $h$ are healed in the limit $h \to 0$. (Note that we also could have chosen arbitrary thin rectangles which are translates of $(0,c) \times (-\frac{1}{2}, \frac{1}{2})$, so that indeed a crack that is not healed must be concentrated on a set whose projection onto the $x_1$-axis is much smaller than $h$.) Also in Section \ref{sec:Gamma} we will benefit from this approach when we need to give a sharp bound on the crack energy in $y^h$ from below in terms of both $J_y$ and $J_{y'}$.  
\end{rem}

\begin{corollary}\label{cor:jump-in-B}
Suppose that $y^h$ with $I^h(y^h) \le C$ satisfy $y^h \to y$ in $L^1$. Then, for a not relabeled subsequence, $h I^{\cal B}$ converges to a finite subset of $[0, L]$ which contains $J_y \cup J_{y'}$.  
\end{corollary}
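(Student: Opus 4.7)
The plan is to combine three ingredients that are already in hand: the uniform bound $\#{\cal B}(h,\lambda,\rho)\le C$ from Lemma \ref{lemma:B-bounds}, the $\PW$-compactness of the interpolants $\tilde r^h,\tilde R^h$ in Lemma \ref{lemma:tilde-R-compact}, and the identifications $y(x)=\bar r(x_1)$, $\partial_1 y(x)=\bar R(x_1)\e_1$ extracted during the proof of Theorem \ref{theo:compactness}.

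First I would prove that, along a subsequence, the sets $I^{\cal B}$ Hausdorff-converge to a finite subset of $[0,L]$. By Lemma \ref{lemma:B-bounds}(ii), $I^{\cal B}$ consists of at most $\#{\cal B}\le C$ connected components, each an interval of length at most $\#{\cal B}\cdot h(1+1/n)\le \hat C h$. Extracting a subsequence I may assume that the number of components is constant and that their midpoints converge to points $t_1,\ldots,t_k\in[0,L]$. Since the components also shrink in length to zero, $I^{\cal B}\to J:=\{t_1,\ldots,t_k\}$ in the Hausdorff metric.

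Next I would invoke Lemma \ref{lemma:tilde-R-compact} to pass to a further subsequence along which $\tilde r^h\to\bar r$ and $\tilde R^h\to\bar R$ in $L^2$ and $J_{\tilde r^h}\cup J_{\tilde R^h}$ Hausdorff-converges to a finite set $J'\supset J_{\bar r}\cup J_{\bar R}$. By the explicit construction of the interpolants in Section \ref{sec:compactness}, $\tilde r^h$ and $\tilde R^h$ are smooth outside $I^{\cal B}$ (they are built from the $C^\infty$ partition of unity $(\varphi_a)$ applied to the affine maps $r_{a,1}$ and rotations $R_{a,1}$ on good rectangles) and jump at most once in each connected component of $I^{\cal B}$. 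This inclusion $J_{\tilde r^h}\cup J_{\tilde R^h}\subset I^{\cal B}$ is in fact recorded explicitly just after \eqref{eq:tR-def}, and passing to the Hausdorff limit yields $J'\subset J$.

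Finally, the conclusion follows from the identifications at the end of the proof of Theorem \ref{theo:compactness}. Since $y=\bar r$ and $\partial_1 y=\bar R\e_1$, and since $\bar R$ is $\SO(2)$-valued and therefore uniquely determined by its first column, $J_y=J_{\bar r}$ and $J_{y'}=J_{\bar R\e_1}=J_{\bar R}$. Combining these equalities with the previous inclusions gives
\begin{align*}
J_y\cup J_{y'}\ =\ J_{\bar r}\cup J_{\bar R}\ \subset\ J'\ \subset\ J,
\end{align*}
which is the desired statement. The argument is essentially bookkeeping once all the previous machinery is in place; the only mildly subtle point is the inclusion $J_{\tilde r^h}\cup J_{\tilde R^h}\subset I^{\cal B}$, and this is immediate from the construction of $\tilde r^h$ and $\tilde R^h$ in Section \ref{sec:compactness}.
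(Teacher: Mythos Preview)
Your proof is correct and follows essentially the same approach as the paper's. The paper's proof is more terse---it simply notes that $J_y \cup J_{y'} = J_{\bar r} \cup J_{\bar R}$ from the proof of Theorem~\ref{theo:compactness}, invokes Lemma~\ref{lemma:tilde-R-compact}(iii) together with the inclusion $J_{\tilde r^h} \cup J_{\tilde R^h} \subset I^{\cal B}$, and observes that $I^{\cal B}$ converges to a finite set since $\#{\cal B} \le C$---whereas you spell out the intermediate step of separating the Hausdorff limit $J$ of $I^{\cal B}$ from the limit $J'$ of the jump sets and recording $J' \subset J$ explicitly; but the ingredients and logic are identical.
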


\begin{proof} 
The preceding proof of Theorem \ref{theo:compactness} gives $J_y \cup J_{y'} = J_{\bar{r}} \cup J_{\bar{R}}$. The claim now immediately follows from Lemma \ref{lemma:tilde-R-compact}(iii) as by construction $J_{\tilde{r}^h} \cup J_{\tilde{R}^h} \subset I^{\cal B}$, which converges to a finite set due to $\#{\cal B} \le C$. 
\end{proof} 

%--------------------------------------------------------------------------
%--------------------------------------------------------------------------
\section{Energy estimates, infinitesimal strain and $\mathbf{\Gamma}$-convergence}\label{sec:Gamma}
%--------------------------------------------------------------------------

While the construction of recovery sequences in Theorem \ref{theo:Gamma-convergence}(ii) will be rather straightforward, the main focus of this section will be the proof of the $\Gamma$-$\liminf$ inequality in Theorem \ref{theo:Gamma-convergence}(i). To this end, we will need two preparatory steps. First, we will provide an energy estimate for the elastic energy contribution on good rectangles in terms of a suitable strain measure to be introduced below. Second, we need to identify the limiting behavior of this strain measure as $h \to 0$.

%--------------------------------------------------------------------------
\subsection{Elastic energy estimates} 

Throughout this paragraph we again assume that $y^h \in {\cal A}^h$ is a bounded energy sequence of deformations verifying $I^h(y^h) \le C$ so that the rescaled deformations $w^h(x) = h^{-1} y^h(h x_1, x_2)$ satisfy \eqref{eq:energy-bound}. $R_{a,1}, c_{a,1}, r_{a,1}, \hat{w}_a$ and $\tilde{R}$ are as in the previous section.

%--------------------------------------------------------------------------
\subsubsection*{Estimates on a single cell} 

Our first aim is provide an asymptotically exact estimate from below on the energy $\int_{Q_a} W(\nabla w^h) + h {\cal H}^1 (J_{w^h} \cap Q_a)$ on a single rectangle $Q_a$ in case $Q_{a-1}, Q_a, Q_{a+1} \in {\cal G}$. In order to do so we will first pass to the modified deformation $\bar{w}_a$ defined in \eqref{eq:bar-w} and let 
\begin{align}\label{eq:bu-def}
  \bar{u}_a(x) 
  := \bar{w}_a(x) - r_{a,1}(x)
  = \begin{cases} 
       \hat{u}_{a}(x) &\mbox{if } x \in P_{a,1} \\ 
       0 &\mbox{otherwise}. 
    \end{cases} 
\end{align}
on $Q_{a}$ for $x \in Q_{a}$ if $Q_a \in {\cal G}$. 

Obviously we have 
\begin{align}\label{eq:en-w-wa-est}
  \int_{Q_{a,\rho}} W(\nabla \bar{w}_a)
  \le \int_{Q_{a,\rho}} W(\nabla \hat{w}_a)
  \le \int_{Q_a} W(\nabla w^h) + h {\cal H}^1 (J_{w^h} \cap Q_a) + C \rho \eps_a 
\end{align}
by \eqref{eq:wa-elast-est} and 
\begin{align}\label{eq:crack-bw}
  {\cal H}^1(J_{\bar{w}_a} \cap Q_{a,\rho}) 
  \le C h^{-1} \eps_a 
\end{align}
by \eqref{eq:wa-crack-est}, \eqref{eq:Cacc-per-est}. Also note that due to \eqref{eq:ua-estimates} 
\begin{align}\label{eq:bu-ests}
  \| \bar{u}_a \|^2_{L^2(Q_{a,\rho})} \le \hat{C} \eps_a, \quad 
  \| e(R^T_{a,1} \nabla \bar{u}_a) \|_{L^2(Q_{a,\rho})}^2 \le \hat{C} \eps_a, \quad 
  \| \nabla \bar{u}_a \|^2_{L^2(Q_{a,\rho})} \le \hat{C}\eps_a^{\frac{9}{10}}. 
\end{align}

We now proceed to prove a lower bound on $h^{-1} \int_{Q_{a,\rho}} W(\nabla \bar{w}_a)$. In view of the global energy estimate to be obtained below it turns out to be insufficient to directly linearize $\bar{w}_a$ around $R_{a,1}$. Instead we have to modify both $R_{a,1}$ and $\nabla \bar{u}_{a}$ in a way which allows for gluing together the contributions on overlapping rectangles without introducing too much fracture. To this end, we first introduce a modification of $\tilde{R}$ which takes values in $\SO(2)$ as follows. Define $\hat{R} = \operatorname{Proj}_{\SO(2)} \tilde{R}$, where $\operatorname{Proj}_{\SO(2)}$ denotes the orthogonal projection of $\R^{2 \times 2}$ onto $\SO(2)$, which is uniquely defined and smooth in a neighborhood of $\SO(2)$. As an immediate consequence of Lemma \ref{lemma:tR-PW} we have 
\begin{align}\label{eq:ttR-tR-est}
  \| \hat{R} - \tilde{R} \|_{L^{\infty}} \le \hat{C} \sqrt{h}. 
\end{align}
In fact, by definition of $\hat{R}$ and \eqref{eq:tR-Ra1-est}, we even have 
\begin{align}\label{eq:ttRRa-Id-est}
\begin{split}
  |\hat{R}^T(h\,\cdot) R_{a,1} - \Id | 
  &\le |\hat{R}(h\,\cdot) - \tilde{R}(h\,\cdot)| + |\tilde{R}(h\,\cdot) - R_{a,1}| \\ 
  &\le 2  |\tilde{R}(h\,\cdot) - R_{a,1}| 
  \le \hat{C} \sqrt{\eps_{a-1} + \eps_a + \eps_{a+1}} 
\end{split}
\end{align}
on $Q_a$. As $\R^{2 \times 2}_{\rm sym}$ is the normal space of $\SO(2)$ at $\Id$, this entails the stronger estimate 
\begin{align}\label{eq:sym-ttR-tR-est}
  |e(\hat{R}^T(h\,\cdot) R_{a,1}) - \Id| \le \hat{C} (\eps_{a-1} + \eps_a + \eps_{a+1}) 
\end{align}
on the symmetric part of $\hat{R}^T(h\,\cdot) R_{a,1} - \Id$. 

Recall that ${\cal Q}$ denotes the Hessian of $W$ at $\Id$. 
\begin{lemma}\label{lemma:en-single-cell}
Let $\chi_a$ be the characteristic function of the set $\{ x \in Q_{a,\rho} : |\nabla \bar{u}_a| \le h^{\frac{4}{5}} \}$. Then 
\begin{align*}
  \int_{Q_{a,\rho}} W( \nabla \bar{w}_a ) 
  &\ge \int_{Q_{a,\rho}} \frac{1}{2} {\cal Q} \big( \chi_a e(\tilde{R}^T(h\,\cdot) \nabla \bar{u}_a) \big) - \hat{C} (\eps_{a-1} + \eps_a + \eps_{a+1}) h^{\frac{1}{2}} - \hat{C} h^{\frac{21}{10}}.
\end{align*}
\end{lemma}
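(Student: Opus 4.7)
The strategy is to exploit frame indifference and the smoothness of $W$ near $\SO(2)$ to Taylor-expand $W(\nabla \bar{w}_a)$ around $\Id$ on the set $\{\chi_a = 1\}$, where by construction the non-rotational part of the gradient is pointwise small, and then to trade the approximating rotation $R_{a,1}$ for the interpolation $\tilde{R}(h\,\cdot)$ at a quantified cost.

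First, using $W \ge 0$ one may restrict attention to $\{\chi_a = 1\}$. On this set $\nabla \bar{w}_a = R_{a,1} + \nabla \bar{u}_a$ and $|R_{a,1}^T \nabla \bar{u}_a| = |\nabla \bar{u}_a| \le h^{4/5}$, so for $h$ small enough $\Id + R_{a,1}^T \nabla \bar{u}_a$ lies in the $C^3$-neighborhood of $\SO(2)$. Frame indifference yields $W(\nabla \bar{w}_a) = W(\Id + R_{a,1}^T \nabla \bar{u}_a)$, and since $W(\Id)=0$, $DW(\Id)=0$ (as $\Id$ minimizes $W$), $D^2W(\Id)=\mathcal{Q}$, and $\mathcal{Q}$ vanishes on $\R^{2\times2}_{\rm skew}$, Taylor's theorem gives
\begin{align*}
  W(\Id + G) \ge \tfrac{1}{2}\mathcal{Q}(e(G)) - C|G|^3
\end{align*}
for $|G|$ small. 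Choosing $G = R_{a,1}^T \nabla \bar{u}_a$ and integrating leads to
\begin{align*}
  \int_{Q_{a,\rho}} W(\nabla \bar{w}_a)
  \ge \int_{Q_{a,\rho}} \chi_a \, \tfrac{1}{2}\mathcal{Q}\bigl(e(R_{a,1}^T \nabla \bar{u}_a)\bigr)
     - C \int_{Q_{a,\rho}} \chi_a |\nabla \bar{u}_a|^3.
\end{align*}

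Next, the two resulting errors are controlled. For the cubic term, $|\nabla \bar{u}_a|^3 \le h^{4/5}|\nabla \bar{u}_a|^2$ on $\{\chi_a=1\}$ and \eqref{eq:bu-ests} give $\int \chi_a |\nabla \bar{u}_a|^3 \le h^{4/5}\hat{C}\eps_a^{9/10}$; a case split according to whether $\eps_a \ge h^3$ or $\eps_a < h^3$ yields $h^{4/5}\eps_a^{9/10} \le \eps_a h^{1/2} + h^{7/2}$, hence the contribution is at most $\hat{C}\eps_a h^{1/2} + \hat{C}h^{21/10}$. To replace $R_{a,1}$ by $\tilde{R}(h\,\cdot)$, I use that for $Q_{a-1}, Q_a, Q_{a+1} \in \mathcal{G}$ the construction \eqref{eq:tR-def} together with Lemma~\ref{lemma:RaRb-est} gives the pointwise bound $|\tilde{R}(hx_1) - R_{a,1}| \le \hat{C}\sqrt{\eps_{a-1}+\eps_a+\eps_{a+1}}$ on $Q_{a,\rho}$. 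Setting $A = e(R_{a,1}^T \nabla \bar{u}_a)$, $B = e(\tilde{R}^T(h\,\cdot) \nabla \bar{u}_a)$ and combining the elementary $|\mathcal{Q}(A) - \mathcal{Q}(B)| \le C|A-B|(|A|+|B|)$ with $|A-B| \le |R_{a,1}-\tilde{R}||\nabla \bar{u}_a|$ and $|A|,|B| \le C|\nabla \bar{u}_a| \le Ch^{4/5}$ on $\{\chi_a=1\}$, one gets
\begin{align*}
  \int \chi_a |\mathcal{Q}(A) - \mathcal{Q}(B)|
  \le \hat{C}\sqrt{\eps_{a-1}+\eps_a+\eps_{a+1}} \cdot h^{8/5},
\end{align*}
and another case split according as $\sqrt{\eps_{a-1}+\eps_a+\eps_{a+1}} \gtrless h^{11/10}$ shows this is at most $\hat{C}(\eps_{a-1}+\eps_a+\eps_{a+1})h^{1/2} + \hat{C}h^{21/10}$.

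Adding the two error bounds and using $\chi_a \mathcal{Q}(E) = \mathcal{Q}(\chi_a E)$ (since $\chi_a \in \{0,1\}$ and $\mathcal{Q}$ is quadratic) yields the asserted inequality. I expect the main technical subtlety to lie in the two case distinctions that keep the error powers of $h$ aligned: the balance between the pointwise threshold $h^{4/5}$ built into $\chi_a$ (which alone governs the cubic remainder) and the rough $L^2$-bound $\eps_a^{9/10}$ on $\nabla \bar{u}_a$ from \eqref{eq:ua-estimates} is delicate enough that a single crude Young inequality would not give the required pair $(h^{1/2}, h^{21/10})$ simultaneously.
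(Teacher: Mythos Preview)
Your proof is correct and takes a somewhat different route from the paper's. The paper introduces the auxiliary rotation $\hat{R}=\operatorname{Proj}_{\SO(2)}\tilde{R}$ and applies frame indifference with $\hat{R}$, which produces two perturbation terms $\hat{R}^T R_{a,1}-\Id$ and $\hat{R}^T\nabla\bar{u}_a$; the first is handled via the ``symmetric part of a rotation minus $\Id$ is quadratic'' observation \eqref{eq:sym-ttR-tR-est}. You instead apply frame indifference with $R_{a,1}$ directly, obtain only the single perturbation $R_{a,1}^T\nabla\bar{u}_a$, and swap $R_{a,1}$ for $\tilde{R}$ only afterwards in the quadratic form. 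This avoids $\hat{R}$ altogether and is arguably cleaner.

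Your error control is correct but more elaborate than necessary. Both case splits can be dropped: for the cubic remainder one has the pointwise bound $\chi_a|\nabla\bar{u}_a|^3\le h^{12/5}$, whence $\int_{Q_{a,\rho}}\chi_a|\nabla\bar{u}_a|^3\le Ch^{12/5}\le Ch^{21/10}$, without ever invoking the $L^2$ bound $\hat{C}\eps_a^{9/10}$; for the rotation swap, since $\eps_{a-1}+\eps_a+\eps_{a+1}\le Ch$ by \eqref{eq:sum-epsa-bound}, one has directly $\sqrt{\eps_{a-1}+\eps_a+\eps_{a+1}}\,h^{8/5}\le C\sqrt{h}\,h^{8/5}=Ch^{21/10}$. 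So in fact your approach yields the stronger error bound $\hat{C}h^{21/10}$ alone, making the term $\hat{C}(\eps_{a-1}+\eps_a+\eps_{a+1})h^{1/2}$ in the lemma's statement superfluous from your point of view (in the paper it arises from the cubic term $|\hat{R}-R_{a,1}|^3\le\hat{C}(\eps_{a-1}+\eps_a+\eps_{a+1})^{3/2}$, which your decomposition never produces). Your closing remark about the delicacy of the case splits is therefore somewhat misplaced: the threshold $h^{4/5}$ alone already controls everything.
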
 

\begin{proof}
Taylor expanding $W$ around $\Id$ as 
\begin{align}\label{eq:Taylor-exp} 
  W(\Id + X) 
  = \frac{1}{2} {\cal Q}(X) + \omega(X), 
\end{align}
where $|\omega(X)| \le C |X|^3$ for $X$ small, we compute on $Q_{a,\rho}$ (with $\tilde{R}$, $\hat{R}$ evaluated at $h x_1$ and $\bar{w}_a$, $\bar{u}_a$ evaluated at $x$) 
\begin{align}\label{eq:Wlb-i}
\begin{split}
  &W \big( \nabla \bar{w}_a \big) 
   \ge \chi_a W \big( \hat{R}^T \nabla \bar{w}_a \big) \\ 
  &~~= \chi_a W \big( \Id + \hat{R}^T R_{a,1} - \Id + \hat{R}^T \nabla \bar{u}_a \big) \\ 
  &~~= \frac{1}{2} \chi_a {\cal Q} \big( \hat{R}^T R_{a,1} - \Id + \hat{R}^T \nabla \bar{u}_a \big) 
     + \chi_a \omega \big( \hat{R}^T R_{a,1} - \Id + \hat{R}^T \nabla \bar{u}_a \big) \\ 
  &~~= \frac{1}{2} \chi_a {\cal Q} \big( e(\hat{R}^T R_{a,1} - \Id) + e(\hat{R}^T \nabla \bar{u}_a) \big) 
     + \chi_a \omega \big( \hat{R}^T R_{a,1} - \Id + \hat{R}^T \nabla \bar{u}_a \big), 
\end{split}
\end{align}
where we have used that $W$ is invariant under rotations and ${\cal Q}$ vanishes on $\R^{2 \times 2}_{\rm skew}$. 

Recalling \eqref{eq:ttRRa-Id-est} and \eqref{eq:sum-epsa-bound} we can estimate the error term by 
\begin{align}\label{eq:Wlb-ii}
\begin{split}
  &\chi_a |\omega \big( \hat{R}^T R_{a,1} - \Id + \hat{R}^T \nabla \bar{u}_a \big)| 
   \le C \chi_a \big( | \hat{R} - R_{a,1} |^3 + |\nabla \bar{u}_a|^3 \big) \\ 
  &~~\le \hat{C} (\eps_{a-1} + \eps_a + \eps_{a+1})^{\frac{3}{2}} + C h^{\frac{12}{5}}  
   \le \hat{C} (\eps_{a-1} + \eps_a + \eps_{a+1}) h^{\frac{1}{2}} + C h^{\frac{12}{5}}. 
\end{split}
\end{align}
Moreover, since $|\hat{R} - \tilde{R}| \le \hat{C} \sqrt{h}$ and $|e(\hat{R}^T R_{a,1} - \Id)| \le \hat{C}(\eps_{a-1} + \eps_a + \eps_{a+1})$ by \eqref{eq:ttR-tR-est} and \eqref{eq:sym-ttR-tR-est}, respectively, we have  
\begin{align}\label{eq:Wlb-iii}
\begin{split}
  &\chi_a {\cal Q} \big( e(\hat{R}^T \nabla \bar{u}_a) + e(\hat{R}^T R_{a,1} - \Id) \big) \\ 
  &~~= \chi_a {\cal Q} \big( e(\tilde{R}^T \nabla \bar{u}_a) + e \big( (\hat{R}^T - \tilde{R}^T) \nabla \bar{u}_a \big) + e(\hat{R}^T R_{a,1} - \Id) \big) \\ 
  &~~\ge \chi_a {\cal Q} \big( e(\tilde{R}^T \nabla \bar{u}_a) \big) \\ 
  &\qquad     - C \chi_a | e(\tilde{R}^T \nabla \bar{u}_a) | \big( | e \big( (\hat{R}^T - \tilde{R}^T) \nabla \bar{u}_a \big) | + | e(\hat{R}^T R_{a,1} - \Id) | \big) \\ 
  &~~\ge \chi_a {\cal Q} \big( e(\tilde{R}^T \nabla \bar{u}_a) \big) 
       - C \chi_a | \nabla \bar{u}_a | \big( |\hat{R}^T - \tilde{R}^T| \, |\nabla \bar{u}_a | + | e(\hat{R}^T R_{a,1} - \Id) | \big) \\ 
  &~~\ge \chi_a {\cal Q} \big( e(\tilde{R}^T \nabla \bar{u}_a) \big) - \hat{C} h^{\frac{4}{5}} (h^{\frac{1}{2}} h^{\frac{4}{5}} + \eps_{a-1} + \eps_a + \eps_{a+1}). 
\end{split}
\end{align}
Combining \eqref{eq:Wlb-i}, \eqref{eq:Wlb-ii} and \eqref{eq:Wlb-iii} we find that indeed 
\begin{align*}
  \int_{Q_{a,\rho}} W( \nabla \bar{w}_a ) 
  &\ge \int_{Q_{a,\rho}} \frac{1}{2} \chi_a {\cal Q} \big( e(\tilde{R}^T \nabla \bar{u}_a) \big) - \hat{C} (\eps_{a-1} + \eps_a + \eps_{a+1}) h^{\frac{1}{2}} - \hat{C} h^{\frac{21}{10}}.
\end{align*}
\end{proof}

%--------------------------------------------------------------------------
\subsubsection*{Global estimates} 

We now give an estimate for a connected part of $h^{-1} \Omega_h$ covered by rectangles in ${\cal G}$. In view of the fact that we later will have to identify the limiting strain on such a part, it turns out to be necessary to also modify $\bar{w}_a$. To this end, we introduce the following measure $F^{(2)}$ of the elastic strain. If $Q_{a-1}, \ldots, Q_{b+1} \in {\cal G}$ we set 
\begin{align}\label{eq:F-def}
  F^{(2)}(x) 
  = \sum_{i=a-1}^{b+1} \varphi_i(x_1) \nabla \bar{w}_{i}(x) 
  \quad\mbox{and}\quad 
  G^{(2)}(hx_1,x_2) 
  = \frac{e \big( \tilde{R}^T(h x_1) F^{(2)}(x) - \Id \big) }{h} 
\end{align} 
for $x \in Q_{a,\rho} \cup \ldots \cup Q_{b,\rho}$ with  the same partition of unity $(\varphi_i)$ as in the definition of $\tilde{R}$, cf.\ \eqref{eq:tR-def}. Also let $\chi_{a,b}$ be the characteristic function of the set 
\begin{align}\label{eq:chi-a-b-def}
  \big\{ x \in Q_{a-1,\rho} \cup \ldots \cup Q_{b+1,\rho} : |\nabla \bar{u}_i(x)| \le h^{\frac{4}{5}} \mbox{ if } x \in Q_{i,\rho} \big\}. 
\end{align}

\begin{lemma}\label{lemma:en-cell-string}
Suppose that $Q_{a-1}, \ldots, Q_{b+1} \in {\cal G}$. 
Then 
\begin{align*}
  &\sum_{i=a}^{b} \int_{Q_i} W \big( \nabla w^h(x) \big) \, dx + h {\cal H}^1 (J_{w^h} \cap Q_i) \\ 
  &~~\ge \frac{h}{2} \int_{h(a-1)}^{hb} \int_{-\frac{1}{2} + \bar{c} \rho}^{\frac{1}{2} - \bar{c} \rho} {\cal Q} \big( \chi_{a,b}(h^{-1}x_1, x_2) G^{(2)}(x) \big) \, dx 
    - \hat{C} h^{\frac{11}{10}} - C \rho h. 
\end{align*}
\end{lemma}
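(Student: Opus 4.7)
The plan is to prove the estimate in three steps: first pass from $w^h$ to the cellwise modifications $\bar w_i$ and apply Lemma \ref{lemma:en-single-cell}, then use convexity of ${\cal Q}$ to pass from the sum of local strain contributions to the globally interpolated quantity, and finally control the rotation interpolation error by a Young's inequality trick. For the first step, \eqref{eq:en-w-wa-est} gives, for each $i \in \{a,\ldots,b\}$, $\int_{Q_i} W(\nabla w^h) + h{\cal H}^1(J_{w^h}\cap Q_i) \geq \int_{Q_{i,\rho}} W(\nabla\bar w_i) - C\rho\eps_i$. Inserting Lemma \ref{lemma:en-single-cell} and summing, the error contributions are controlled via \eqref{eq:sum-epsa-bound} and the fact that there are at most $O(h^{-1})$ good cells: $\sum_i \hat C(\eps_{i-1}+\eps_i+\eps_{i+1})h^{1/2} \leq \hat C h^{3/2}$, $\sum_i \hat C h^{21/10}\leq \hat C h^{11/10}$, and $C\rho\sum_i\eps_i \leq C\rho h$. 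Thus
\begin{equation*}
\sum_{i=a}^{b} \Bigl(\int_{Q_i} W(\nabla w^h) + h{\cal H}^1(J_{w^h}\cap Q_i)\Bigr) \geq \tfrac12\sum_i \int_{Q_{i,\rho}} {\cal Q}\bigl(\chi_i\, e(\tilde R^T \nabla\bar u_i)\bigr) - \hat C h^{11/10} - C\rho h.
\end{equation*}

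For the second step, set $X_i := e(\tilde R^T \nabla\bar u_i)$ and, using $\nabla\bar u_i = \nabla\bar w_i - R_{i,1}$ together with $\sum_i \varphi_i \equiv 1$, decompose $e(\tilde R^T F^{(2)} - \Id) = A + D$ where $A = \sum_i \varphi_i X_i$ and $D = \sum_i \varphi_i\, e(\tilde R^T R_{i,1} - \Id)$. On $\{\chi_{a,b} = 1\}$ every relevant $\chi_i$ equals one, hence $\chi_{a,b} A = \sum_i \varphi_i\chi_i X_i$ pointwise there, while elsewhere $\chi_{a,b} A = 0$; in either case ${\cal Q}(\chi_{a,b} A) \leq {\cal Q}(\sum_i \varphi_i \chi_i X_i)$. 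Jensen's inequality for the convex combination $\sum_i\varphi_i = 1$ and integration using $\mathrm{supp}\,\varphi_i \subset Q_{i,\rho}$ yield $\int {\cal Q}(\chi_{a,b} A) \leq \sum_i \int_{Q_{i,\rho}} {\cal Q}(\chi_i X_i)$. Young's inequality for the quadratic form ${\cal Q}$ then gives, for any $\eps' > 0$,
\begin{equation*}
\int {\cal Q}\bigl(\chi_{a,b}(A+D)\bigr) \leq (1+\eps')\int {\cal Q}(\chi_{a,b} A) + (1+(\eps')^{-1})\int {\cal Q}(\chi_{a,b} D).
\end{equation*}

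The hard part will be estimating $D$. Outside the transition strips between adjacent cells, exactly one $\varphi_i$ equals one, so $\tilde R = R_{i,1}$ and $D$ vanishes. In a transition strip where $\varphi_i + \varphi_{i+1} = 1$, the identity $\tilde R^T R_{i,1} - \Id = \varphi_{i+1}(R_{i+1,1}^T R_{i,1} - \Id)$ together with $R_{i+1,1}^T R_{i,1} \in \SO(2)$ gives the crucial \emph{quadratic} bound $|e(\tilde R^T R_{i,1} - \Id)| \leq \hat C|R_{i+1,1} - R_{i,1}|^2 \leq \hat C(\eps_i + \eps_{i+1})$ via Lemma \ref{lemma:RaRb-est}, so $\int {\cal Q}(D) \leq \hat C \sum_i \eps_i^2 \leq \hat C h^2$ by \eqref{eq:sum-epsa-bound}. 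Combined with $\sum_i \int {\cal Q}(\chi_i X_i) \leq \hat C h^{4/5}$, which follows from the bound $\|\nabla\bar u_i\|_{L^2}^2 \leq \hat C\eps_i^{9/10}$ in \eqref{eq:bu-ests} and H\"older over the $O(h^{-1})$ cells, the choice $\eps' = h^{1/2}$ makes both $\eps'\sum_i\int {\cal Q}(X_i)$ and $(\eps')^{-1}\int {\cal Q}(D)$ of order $\hat C h^{11/10}$ or smaller. This yields $\int {\cal Q}(\chi_{a,b}\, e(\tilde R^T F^{(2)} - \Id)) \leq \sum_i \int {\cal Q}(\chi_i X_i) + \hat C h^{11/10}$, and combining with step one and the change of variables $\tilde x_1 = hx_1$ (under which $\frac{h}{2}\int_{h(a-1)}^{hb} {\cal Q}(\chi_{a,b} G^{(2)}) = \frac{1}{2}\int_{a-1}^{b} {\cal Q}(\chi_{a,b}\, e(\tilde R^T F^{(2)} - \Id))$) completes the proof. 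The essential subtlety is that the quadratic (not merely linear) smallness of $e(R^T R' - \Id)$ for nearby rotations $R, R' \in \SO(2)$ is what saves the argument: a linear bound $|D| = O(\sqrt h)$ would yield an $O(h)$ error, incompatible with the required $\hat C h^{11/10}$ control.
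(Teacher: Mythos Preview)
Your proof is correct and follows the same overall strategy as the paper: reduce to the modifications via \eqref{eq:en-w-wa-est} and Lemma~\ref{lemma:en-single-cell}, then use convexity of ${\cal Q}$ to pass from the cellwise strains to the interpolated quantity $G^{(2)}$.

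The one organizational difference is worth noting. The paper computes directly that on an overlap $Q_{i,\rho}\cap Q_{j,\rho}$ one has $\tilde R^T F^{(2)}=\tilde R^T\tilde R+\varphi_i\tilde R^T\nabla\bar u_i+\varphi_j\tilde R^T\nabla\bar u_j$ and then writes $\tilde R^T\tilde R=\Id$, so that convexity of ${\cal Q}$ immediately gives ${\cal Q}\big(e(\tilde R^T F^{(2)}-\Id)\big)\le {\cal Q}\big(e(\tilde R^T\nabla\bar u_i)\big)+{\cal Q}\big(e(\tilde R^T\nabla\bar u_j)\big)$, with no Young step needed. Strictly speaking $\tilde R^T\tilde R\ne\Id$ on the transition strips (a convex combination of two rotations is not a rotation), and the discrepancy is exactly your term $D=e(\tilde R^T\tilde R-\Id)$. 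Your explicit treatment via Young's inequality with $\eps'=h^{1/2}$, together with the quadratic bound $|D|\le\hat C|R_{i,1}-R_{i+1,1}|^2\le\hat C(\eps_i+\eps_{i+1})$ and the observation that $\sum_i\eps_i^2\le(\max_i\eps_i)\sum_i\eps_i\le(Ch)(Ch)=Ch^2$ (since each $\eps_i\le\sum_j\eps_j\le Ch$), is thus a more careful version of the same step; it is precisely what is needed to make the paper's shortcut rigorous. Both routes lead to the stated estimate.
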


\begin{proof}
By \eqref{eq:en-w-wa-est} and Lemma \ref{lemma:en-single-cell} we have 
\begin{align*}
  &\sum_{i=a}^{b} \int_{Q_i} W(\nabla w^h) + h {\cal H}^1 (J_{w^h} \cap Q_i) 
   \ge \sum_{i=a}^{b} \int_{Q_{i,\rho}} W(\nabla \bar{w}_i) - C \rho \eps_i \\ 
  &~~\ge \sum_{i=a}^{b} \bigg( \int_{Q_{i,\rho}} \frac{1}{2} \chi_{a,b} {\cal Q} \big( e(\tilde{R}^T(h\,\cdot) \nabla \bar{u}_i) \big) 
    - \hat{C} (\eps_{i-1} + \eps_i + \eps_{i+1}) h^{\frac{1}{2}} - \hat{C} h^{\frac{21}{10}}  - C \rho \eps_i \bigg). 
\end{align*} 
In order to further estimate this in terms of $F^{(2)}$ we first note that (again abbreviating $\tilde{R}(h\,\cdot)$ by $\tilde{R}$) 
$$ \tilde{R}^T F^{(2)} 
   = \tilde{R}^T \nabla \bar{w}_i 
   = R_{i,1}^T \nabla \bar{w}_i 
   = \Id + R_{i,1}^T \nabla \bar{u}_i 
   = \Id + \tilde{R}^T \nabla \bar{u}_i $$ 
on $Q_{i,\rho} \setminus (Q_{i-1,\rho} \cup Q_{i+1,\rho})$. On the other hand, if $j = i-1$ or $j=i+1$, then on $Q_{i,\rho} \cap Q_{j,\rho}$
\begin{align*}
  \tilde{R}^T F^{(2)} 
  &= \tilde{R}^T (\varphi_i \nabla \bar{w}_i + \varphi_j \nabla \bar{w}_j) \\ 
  &= \tilde{R}^T \big( \varphi_i (R_{i,1} + \nabla \bar{u}_i) + \varphi_j (R_{j,1} + \nabla \bar{u}_j) \big) \\ 
  &= \tilde{R}^T \tilde{R} + \varphi_i \tilde{R}^T \nabla \bar{u}_i + \varphi_j \tilde{R}^T \nabla \bar{u}_j \\ 
  &= \Id + \varphi_i \tilde{R}^T \nabla \bar{u}_i + \varphi_j \tilde{R}^T \nabla \bar{u}_j. 
\end{align*}
By convexity of ${\cal Q}$ we therefore have 
\begin{align*}
  {\cal Q} \big( e( \tilde{R}^T F^{(2)} - \Id) \big) 
  &= {\cal Q} \big(\varphi_i e(\tilde{R}^T \nabla \bar{u}_i) + \varphi_j e(\tilde{R}^T \nabla \bar{u}_j) \big) \\ 
  &\le \varphi_i {\cal Q} \big(e(\tilde{R}^T \nabla \bar{u}_i) \big) + \varphi_j {\cal Q} \big( e(\tilde{R}^T \nabla \bar{u}_j) \big) \\ 
  &\le {\cal Q} \big(e(\tilde{R}^T \nabla \bar{u}_i) \big) + {\cal Q} \big( e(\tilde{R}^T \nabla \bar{u}_j) \big) 
\end{align*} 
on $Q_{i,\rho} \cap Q_{j,\rho}$. Combining with the previous estimates we arrive at 
\begin{align*}
  &\sum_{i=a}^{b} \int_{Q_i} W \big( \nabla w^h(x) \big) \, dx + h {\cal H}^1 (J_{w^h} \cap Q_i) \\ 
  &~~\ge  \frac{1}{2} \int_{a-1}^{b} \int_{-\frac{1}{2} + \bar{c} \rho}^{\frac{1}{2} - \bar{c} \rho} \chi_{a,b}(x) {\cal Q} \big( e(\tilde{R}^T(hx_1) F^{(2)}(x)) - \Id \big) \, dx \\ 
  &~~\qquad\qquad   - \sum_{i = a}^b \big( \hat{C} (\eps_{i-1} + \eps_i + \eps_{i+1}) h^{\frac{1}{2}} - \hat{C} h^{\frac{21}{10}}  - C \rho \eps_i \big) \\ 
  &~~\ge \frac{h^2}{2} \int_{a-1}^{b} \int_{-\frac{1}{2} + \bar{c} \rho}^{\frac{1}{2} - \bar{c} \rho} \chi_{a,b}(x) {\cal Q} \big( G^{(2)}(h x_1, x_2) \big) \, dx 
    - \hat{C} h^{\frac{3}{2}} - \hat{C} h^{\frac{11}{10}} -C \rho \sum_{j = 1}^N \eps_j \\ 
  &~~\ge \frac{h}{2} \int_{h(a-1)}^{hb} \int_{-\frac{1}{2} + \bar{c} \rho}^{\frac{1}{2} - \bar{c} \rho} {\cal Q} \big( \chi_{a,b}(h^{-1}x_1, x_2) G^{(2)}(x) \big) \, dx 
    - \hat{C} h^{\frac{11}{10}} - C \rho h  
\end{align*}
by \eqref{eq:sum-epsa-bound}. 
\end{proof}

\subsection{Deformation interpolation and limiting strain}

In the following we will have to identify, on parts covered by rectangles in ${\cal G}$, the limiting behavior of $G^{(2)}$, cf.\ \eqref{eq:F-def}. In particular, it will be essential to ensure that in passing from the piecewisely defined displacements $\nabla \bar{u}_a$ to the global quantity $F^{(2)}$ the limiting infinitesimal strain (of typical order $h$ on each rectangle) is measured sufficiently accurately by $F^{(2)}$. The main difficulty in doing so is caused by the fact that $F^{(2)}$ itself is not a gradient. For this reason we now also introduce an interpolation $\tilde{w}$ of the $\bar{w}_a$, cf.\ \eqref{eq:bar-w}. We will then proceed in two steps. First we will prove that, in a suitable weak sense, $F^{(2)}$ is very close to $\nabla \tilde{w}$. Second we will show that the relevant limiting entries of $F^{(2)}$ can be recovered with the help of an $SBV$ closure argument applied to an auxiliary function which arises from $w^h$ by `undoing the rotation $\tilde{R}$' and suitably rescaling both its image and its preimage.

If $Q_{a-1}, \ldots, Q_{b+1} \in {\cal G}$ we define an interpolation $\tilde{w} = \tilde{w}^h \in SBV \big(Q_{a,\rho} \cup \ldots \cup Q_{b,\rho} ; \R^2 \big)$ by setting 
\begin{align*}
  \tilde{w}(x) 
  = \sum_{i=a-1}^{b+1} \varphi_i(x_1) \bar{w}_{i}(x) 
\end{align*}
with the same partition of unity $(\varphi_a)_{a \in \N}$ as in the definition of $\tilde{R}$ and $F^{(2)}$, cf.\ \eqref{eq:tR-def} and \eqref{eq:F-def}. The jump set $J_{\tilde{w}}$ of $\tilde{w}$ then satisfies $J_{\tilde{w}} \subset \bigcup_{i=a-1}^{b+1} (J_{\bar{w}_{i,1}} \cap Q_{i,\rho})$ and thus can readily be estimated by  
\begin{align}\label{eq:jump-tw-est}
  {\cal H}^1 \big( J_{\tilde{w}} \cap (Q_{a,\rho} \cup \ldots \cup Q_{b,\rho}) \big)
  \le \sum_{i=a-1}^{b+1} {\cal H}^1(J_{\bar{w}_{i}} \cap Q_{i,\rho})
  \le h^{-1} \sum_{i=a-1}^{b+1} \eps_i 
  \le C 
\end{align}
with the help of \eqref{eq:crack-bw} and \eqref{eq:sum-epsa-bound}. With $F^{(2)}$ as in \eqref{eq:F-def} and appropriately defined $F^{(1)}$ the absolutely continuous part of the gradient of $\tilde{w}^h$ splits as 
\begin{align}\label{eq:nabla-tw-split}
  \nabla \tilde{w} 
  = \sum_{i=a-1}^{b+1} \varphi_i' (\bar{w}_i \mid 0) + \sum_{i=a-1}^{b+1} \varphi_i \nabla \bar{w}_i 
  =: F^{(1)} + F^{(2)}. \end{align}

\subsubsection*{Gradient estimates}

Our main task is to show that, to leading order $h$, $\nabla \tilde{w}^h$ can be approximated in a suitable sense by $F^{(2)}$ only. For this we first need to give a precise estimate on the difference of the $\bar{w}_a$ on overlapping rectangles.
\begin{lemma}\label{lemma:F1-local-est} 
Let $1 \le p < 2$. Suppose that $Q_a, Q_b$, $b = a+1$, are overlapping rectangles in ${\cal G}$. Then 
$$ \| \bar{w}_a - \bar{w}_b \|_{L^p(Q_{a,\rho} \cap Q_{b,\rho})}^p 
   \le C \rho^{\frac{p}{2}} ( \eps_a^{\frac{p}{2}} + \eps_b^{\frac{p}{2}}) + \hat{C} h^{\frac{p-2}{2}} \, (\eps_a + \eps_b). $$ 
\end{lemma}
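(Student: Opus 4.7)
The strategy is to split $D := Q_{a,\rho} \cap Q_{b,\rho}$ into a ``good'' set $A \cap B$ with $A := P_{a,1} \cap V_a$, $B := P_{b,1} \cap V_b$, on which $\bar{w}_a = \hat{w}_a$ and $\bar{w}_b = \hat{w}_b$ are both $L^2$-close to $w^h$, and a ``bad'' complement of small measure, and then use H\"older's inequality to interpolate from $L^2$ to $L^p$. The key ingredients are: the sharp modification bound \eqref{eq:Va-est}, whose prefactor is $C\rho$ (and not $\hat{C}$); the displacement estimate $\|\bar{u}_i\|_{L^2(Q_{i,\rho})}^2 \le \hat{C}\eps_i$ from \eqref{eq:bu-ests}, with $\bar{u}_i := \bar{w}_i - r_{i,1}$; the rigid motion comparison in Lemma~\ref{lemma:RaRb-est}; and the measure bounds $|Q_{a,\rho} \setminus V_a| \le C\rho h^{-1}\eps_a$ from Theorem~\ref{theo:quant-rig} and $|Q_{a,\rho} \setminus P_{a,1}| \le C h^{-1}\eps_a + C\rho$ obtained via the isoperimetric argument in \eqref{eq:Per-Pa1-est} applied to \eqref{eq:Cacc-per-est}, as in the proof of Theorem~\ref{theo:compactness}.

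On $A \cap B$ the triangle inequality through $w^h$ combined with \eqref{eq:Va-est} gives $\|\bar{w}_a - \bar{w}_b\|_{L^2(A\cap B)}^2 \le C\rho(\eps_a + \eps_b)$. Since $|A \cap B| \le C$, H\"older's inequality with exponent $2/p$ produces
\[
\|\bar{w}_a - \bar{w}_b\|_{L^p(A\cap B)}^p \le C\bigl(\rho(\eps_a + \eps_b)\bigr)^{p/2} \le C\rho^{p/2}(\eps_a^{p/2} + \eps_b^{p/2}),
\]
matching the first term of the claim; importantly, the constant here is manifestly independent of $\rho$ and $\lambda$. On the bad complement $E := D \setminus (A \cap B)$, the measure bounds give $|E| \le C(h^{-1}(\eps_a + \eps_b) + \rho)$. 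Decomposing $\bar{w}_a - \bar{w}_b = \bar{u}_a - \bar{u}_b + (r_{a,1} - r_{b,1})$ and combining \eqref{eq:bu-ests} with Lemma~\ref{lemma:RaRb-est} yields $\|\bar{w}_a - \bar{w}_b\|_{L^2(E)}^2 \le \hat{C}(\eps_a + \eps_b)$. A further H\"older step together with the subadditivity $(x+y)^{(2-p)/2} \le x^{(2-p)/2} + y^{(2-p)/2}$ applied to $|E|^{(2-p)/2}$ gives
\[
\|\bar{w}_a - \bar{w}_b\|_{L^p(E)}^p \le \hat{C}h^{(p-2)/2}(\eps_a + \eps_b) + \hat{C}\rho^{(2-p)/2}(\eps_a + \eps_b)^{p/2},
\]
whose first summand is the stated second term.

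The main obstacle is absorbing the residual $\hat{C}\rho^{(2-p)/2}(\eps_a + \eps_b)^{p/2}$ into the two stated terms. When $\eps_a + \eps_b \ge \rho h$, it is already dominated by $\hat{C}h^{(p-2)/2}(\eps_a + \eps_b)$. When $\eps_a + \eps_b < \rho h$, the crack contribution in \eqref{eq:Cacc-per-est} is itself less than $\rho$, and the tighter isoperimetric bound $|Q_{a,\rho} \setminus P_{a,1}| \le C(h^{-1}\eps_a)^2$ underlying \eqref{eq:Per-Pa1-est} can be substituted back into the H\"older estimate on $E$ to close the residual. Careful bookkeeping of which constants may depend on $\rho$ and $\lambda$ (hidden in $\hat{C}$) versus those that must remain independent (the $C$ in the first term of the claim), together with the case split for the residual, is the technical heart of the proof.
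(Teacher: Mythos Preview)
Your proof is correct and follows the same decomposition as the paper: split $D=Q_{a,\rho}\cap Q_{b,\rho}$ into the good set $V=P_{a,1}\cap V_a\cap P_{b,1}\cap V_b$, triangle through $w^h$ on $V$ using \eqref{eq:Va-est}, and H\"older against the $L^2$ displacement bounds \eqref{eq:bu-ests} together with Lemma~\ref{lemma:RaRb-est} on the complement $E$.

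The one point worth noting is that your case split for the residual $\hat{C}\rho^{(2-p)/2}(\eps_a+\eps_b)^{p/2}$ is unnecessary. The paper avoids the $+\rho$ term in $|E|$ altogether: from the definition of $\eps_a$ one has ${\cal H}^1(J_{w^h}\cap Q_a)\le h^{-1}\eps_a$, so \eqref{eq:Cacc-per-est} gives $\mathrm{Per}(P_{a,1},Q_{a,\rho})\le (1+C\rho)h^{-1}\eps_a\le Ch^{-1}\eps_a$, and since the global energy bound \eqref{eq:energy-bound} forces $h^{-1}\eps_a\le C$, the isoperimetric step yields
\[
|Q_{a,\rho}\setminus P_{a,1}|\le C\big(\mathrm{Per}(P_{a,1},Q_{a,\rho})\big)^2\le C(h^{-1}\eps_a)^2\le Ch^{-1}\eps_a
\]
unconditionally. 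Combined with $|Q_{a,\rho}\setminus V_a|\le C\rho h^{-1}\eps_a$ this gives $|E|\le Ch^{-1}(\eps_a+\eps_b)$ directly, and the H\"older step on $E$ then produces only the term $\hat{C}h^{(p-2)/2}(\eps_a+\eps_b)$ with no residual to absorb. This is precisely the tighter bound you invoked in your Case~2; it in fact holds in all cases, which is what the paper exploits.
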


\begin{proof}
On $V = V_a \cap P_{a,1} \cap V_b \cap P_{b,1}$ we have 
%(XXX evtl. besser, wenn in \eqref{eq:Va-est} bessere Abschaetzung. Ist aber unnoetig.)
\begin{align}\label{eq:wa-w-V}
 \| \bar{w}_a - w^h \|_{L^p(V)}^p 
   \le C \| \bar{w}_a - w^h \|_{L^2(V)}^p 
   \le  C (\rho \eps_a)^{\frac{p}{2}}. 
\end{align}
by \eqref{eq:bar-w} and \eqref{eq:Va-est}.

In order to estimate this difference on the remaining part we first note that by \eqref{eq:Per-Pa1-est} and \eqref{eq:Cacc-per-est} we have 
\begin{align*}
  | (Q_{a,\rho} \cap Q_{b,\rho}) \setminus (P_{a,1} \cap P_{b,1}) | 
  &\le | Q_{a,\rho} \setminus P_{a,1} | + | Q_{b,\rho} \setminus P_{b,1} | \\ 
  &\le C \big( (\mathrm{Per}(P_{a,1}, Q_{a,\rho}))^2 + (\mathrm{Per}(P_{b,1}, Q_{b,\rho}))^2 \big) \\ 
   %\le C \big( {\cal H}^1(J_{w^h} \cap Q_a) + {\cal H}^1(J_{w^h} \cap Q_b) \big) \\
  &\le C h^{-1} (\eps_a + \eps_b). 
\end{align*}
By \eqref{eq:Va-est} the same estimate holds for $| (Q_{a,\rho} \cap Q_{b,\rho}) \setminus (V_a \cap V_b) |$ and hence $| (Q_{a,\rho} \cap Q_{b,\rho}) \setminus V| \le C h^{-1} (\eps_a + \eps_b)$. But then H{\"o}lder's inequality gives 
\begin{align*}
  \int_{(Q_{a,\rho} \cap Q_{b,\rho}) \setminus V} |\bar{w}_a - r_{a,1}|^p 
  &\le   | (Q_{a,\rho} \cap Q_{b,\rho}) \setminus V |^{\frac{2-p}{2}} \, \bigg( \int_{P_{a,1}} |\bar{w}_a - r_{a,1}|^2 \bigg)^{\frac{p}{2}} \\ 
  &\le C \big( h^{-1} (\eps_a + \eps_b) \big)^{\frac{2-p}{2}} \, \| \bar{u}_a \|_{L^2(Q_{a,\rho})}^p 
%  &\le \hat{C} ( h^{-1} (\eps_a + \eps_b) )^{\frac{2-p}{2}} \, \eps_a^{\frac{p}{2}} \\ 
   \le \hat{C} h^{\frac{p-2}{2}} \, (\eps_a + \eps_b), 
\end{align*} 
where we have used that $\| \bar{u}_a \|_{L^2(Q_{a,\rho})}^p \le \hat{C} \eps_a^{\frac{p}{2}} \le \hat{C} (\eps_a + \eps_b)^{\frac{p}{2}}$ according to \eqref{eq:bu-ests}. 

Together with \eqref{eq:wa-w-V} this shows that 
\begin{align}\label{eq:wa-w}
  \| \bar{w}_a - \chi_{V} w^h - \chi_{(Q_{a,\rho} \cap Q_{b,\rho}) \setminus V} r_{a,1} \|_{L^p(Q_{a,\rho} \cap Q_{b,\rho})}^p 
  \le C (\rho \eps_a)^{\frac{p}{2}} + \hat{C} h^{\frac{p-2}{2}} \, (\eps_a + \eps_b). 
\end{align} 
%(XXX 1. term l.s. evtl. besser, wenn in \eqref{eq:Va-est} bessere Abschaetzung. Ist aber unnoetig.) 
By Lemma \ref{lemma:RaRb-est} we have $\| r_{a,1} - r_{b,1} \|_{\infty}^2 \le \hat{C}(\eps_a + \eps_b)$ on $Q_a$ and so, similarly,  
\begin{align*}
  \int_{(Q_{a,\rho} \cap Q_{b,\rho}) \setminus V} |r_{a,1} - r_{b,1}|^p 
  &\le  | (Q_{a,\rho} \cap Q_{b,\rho}) \setminus V |^{\frac{2-p}{2}} \, \bigg( \int_{Q_{a,\rho}} |r_{a,1} - r_{b,1}|^2 \bigg)^{\frac{p}{2}} \\ 
  &\le \hat{C} \big( h^{-1} (\eps_a + \eps_b) \big)^{\frac{2-p}{2}} \, (\eps_a + \eps_b)^{\frac{p}{2}} 
   = \hat{C} h^{\frac{p-2}{2}} \, (\eps_a + \eps_b).
\end{align*} 
So with \eqref{eq:wa-w} we also obtain 
\begin{align}\label{eq:wa-w-Rb}
  \| \bar{w}_a - \chi_{V} w^h - \chi_{(Q_{a,\rho} \cap Q_{b,\rho}) \setminus V} r_{b,1} \|_{L^p(Q_{a,\rho} \cap Q_{b,\rho})}^p 
  \le C (\rho \eps_a)^{\frac{p}{2}} + \hat{C} h^{\frac{p-2}{2}} \, (\eps_a + \eps_b). 
\end{align} 
As analogously to \eqref{eq:wa-w} we have 
\begin{align}\label{eq:wb-w}
  \| \bar{w}_b - \chi_{V} w^h - \chi_{(Q_{a,\rho} \cap Q_{b,\rho}) \setminus V} r_{b,1} \|_{L^p(Q_{a,\rho} \cap Q_{b,\rho})}^p 
  \le C (\rho \eps_b)^{\frac{p}{2}} + \hat{C} h^{\frac{p-2}{2}} \, (\eps_a + \eps_b), 
\end{align} 
we may finally combine \eqref{eq:wb-w} and \eqref{eq:wa-w-Rb} to see that indeed 
$$  \| \bar{w}_a - \bar{w}_b \|_{L^p(Q_{a,\rho} \cap Q_{b,\rho})}^p 
   \le C \rho^{\frac{p}{2}} ( \eps_a^{\frac{p}{2}} + \eps_b^{\frac{p}{2}}) + \hat{C} h^{\frac{p-2}{2}} \, (\eps_a + \eps_b). $$ 
\end{proof}

We can now prove the following global estimate on $F^{(1)}$, cf.\ \eqref{eq:nabla-tw-split}. 
\begin{lemma}\label{lemma:F1-global-est} 
Let $1 \le p < 2$. 
%Suppose $(p, q) \subset (0, L)$ is such that for $a = \lceil h^{-1}p \rceil$, $b = \lceil h^{-1}q \rceil$ 
%(XXX vgl.\ oben XXX) the rectangles $Q_{a-1}, \ldots, Q_{b+1}$ belong to ${\cal G}$. 
Suppose that $Q_{a-1}, \ldots, Q_{b+1} \in {\cal G}$. Then 
$$ \| F^{(1)} \|_{L^p(Q_{a,\rho} \cup \ldots \cup Q_{b,\rho})}^p 
   \le ( C \rho^{\frac{p}{2}} + \hat{C} h^{1 - \frac{p}{2}} )  h^{p-1}. $$
\end{lemma}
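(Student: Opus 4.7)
The plan is to exploit the partition-of-unity structure of $F^{(1)}$ to reduce the estimate to a sum over overlap regions of the local bound provided by Lemma \ref{lemma:F1-local-est}, and then apply H\"older's inequality together with the global energy bound \eqref{eq:sum-epsa-bound}.

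The starting observation is that since $\sum_i \varphi_i \equiv 1$ on $(0,\infty)$, we have $\sum_i \varphi_i' \equiv 0$, and by the support property of $(\varphi_i)$ at most two consecutive terms are nonzero at any point. Thus, on the overlap region $Q_{i,\rho} \cap Q_{i+1,\rho}$ (where only $\varphi_i'$ and $\varphi_{i+1}' = -\varphi_i'$ contribute), we can rewrite
\begin{align*}
  F^{(1)}(x)
  = \varphi_i'(x_1) \big( (\bar{w}_i(x) - \bar{w}_{i+1}(x)) \mid 0 \big),
\end{align*}
while $F^{(1)}(x) = 0$ off the union of such overlap regions. Using $|\varphi_i'| \le 2n$ this yields the pointwise bound $|F^{(1)}(x)| \le 2n |\bar{w}_i(x) - \bar{w}_{i+1}(x)|$ on each overlap.

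Next I would integrate the $p$-th power and apply Lemma \ref{lemma:F1-local-est} to each overlap:
\begin{align*}
  \| F^{(1)} \|_{L^p(Q_{a,\rho} \cup \ldots \cup Q_{b,\rho})}^p
  &\le (2n)^p \sum_{i=a-1}^{b} \| \bar{w}_i - \bar{w}_{i+1} \|_{L^p(Q_{i,\rho} \cap Q_{i+1,\rho})}^p \\
  &\le C \sum_{i} \Big( \rho^{p/2} (\eps_i^{p/2} + \eps_{i+1}^{p/2}) + \hat{C} h^{(p-2)/2} (\eps_i + \eps_{i+1}) \Big).
\end{align*}
The second sum is easy: by \eqref{eq:sum-epsa-bound} it is bounded by $\hat{C} h^{(p-2)/2} \cdot C h = \hat{C} h^{p/2}$. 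For the first sum, since there are at most $N = O(h^{-1})$ rectangles, H\"older's inequality combined with $\sum_i \eps_i \le Ch$ gives
\begin{align*}
  \sum_i \eps_i^{p/2}
  \le \Big( \sum_i \eps_i \Big)^{p/2} \cdot N^{1-p/2}
  \le (Ch)^{p/2} \cdot (Ch^{-1})^{1-p/2}
  = C h^{p-1}.
\end{align*}
Combining the two contributions yields $\| F^{(1)} \|_{L^p}^p \le C \rho^{p/2} h^{p-1} + \hat{C} h^{p/2} = (C \rho^{p/2} + \hat{C} h^{1-p/2}) h^{p-1}$, which is the claimed bound.

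The only non-routine step is the H\"older estimate on $\sum_i \eps_i^{p/2}$, which crucially uses $p < 2$ together with both the total energy budget $\sum_i \eps_i \lesssim h$ and the number of cells $N \lesssim h^{-1}$; this is what produces the sharp exponent $h^{p-1}$ and cannot be replaced by a trivial bound. Everything else is bookkeeping, provided one has already established the local $L^p$ comparison of $\bar{w}_i$ and $\bar{w}_{i+1}$ in Lemma \ref{lemma:F1-local-est}.
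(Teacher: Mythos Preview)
Your proof is correct and follows essentially the same argument as the paper: reduce $F^{(1)}$ to differences $\bar{w}_i-\bar{w}_{i+1}$ on overlaps via $\varphi_i'+\varphi_{i+1}'=0$, apply Lemma~\ref{lemma:F1-local-est}, and control $\sum_i \eps_i^{p/2}$ by H\"older using $N\lesssim h^{-1}$ and \eqref{eq:sum-epsa-bound}. The steps and the final bound match the paper's proof line by line.
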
 

\begin{proof} 
Since, by construction, $\varphi_i'$ vanishes outside $(i-1-\frac{1}{2n}+\bar{c}\rho, i-1+\frac{1}{2n}-\bar{c}\rho) \cup (i-\frac{1}{2n}+\bar{c}\rho, i+\frac{1}{2n}-\bar{c}\rho)$ we have that
\begin{align}\label{eq:F-sum-lb}
\begin{split}
  \| F^{(1)} \|_{L^p(Q_{a,\rho} \cup \ldots \cup Q_{b,\rho})} ^p 
  &= \sum_{i=a-1}^{b} \| \varphi_i' \bar{w}_i + \varphi_{i+1}' \bar{w}_{i+1} \|_{L^p(Q_{i,\rho} \cap Q_{i+1,\rho})}^p \\
  &\le C \sum_{i=a-1}^{b} \| \bar{w}_i - \bar{w}_{i+1} \|_{L^p(Q_{i,\rho} \cap Q_{i+1,\rho})}^p, 
\end{split}
\end{align} 
where we have used that $\varphi_i' + \varphi_{i+1}' = 0$ and $|\varphi_i'| \le 2n$ on $(i - \frac{1}{2n}, i + \frac{1}{2n})$. With Lemma \ref{lemma:F1-local-est}, H{\"o}lder's inequality and \eqref{eq:sum-epsa-bound} we can further estimate 
\begin{align*}
  \sum_{i=a-1}^{b} \| \bar{w}_i - \bar{w}_{i+1} \|_{L^p(Q_{i,\rho} \cap Q_{i+1,\rho})}^p
  &\le C \rho^{\frac{p}{2}} \sum_{i=a-1}^{b+1} \eps_i^{\frac{p}{2}} + \hat{C} h^{\frac{p-2}{2}} \sum_{i=a-1}^{b+1} \eps_i \\ 
%  &\le C \rho^{\frac{p}{2}}  \bigg( \sum_{i=a-1}^{b+1} 1 \bigg)^{\frac{2-p}{2}} 
%        \bigg( \sum_{i=a-1}^{b+1}  \Big( \eps_i^{\frac{p}{2}} \Big)^{\frac{2}{p}} \bigg)^{\frac{p}{2}}
%        + \hat{C} h^{\frac{p-2}{2}} \sum_{i=a-1}^{b+1} \eps_i \\ 
  &\le C \rho^{\frac{p}{2}}  h^{\frac{p-2}{2}} \bigg( \sum_{i=1}^N \eps_i \bigg)^{\frac{p}{2}}
        + \hat{C} h^{\frac{p-2}{2}} \sum_{i=1}^N \eps_i \\
%  &\le C \rho^{\frac{p}{2}}  h^{\frac{p-2}{2}} h^{\frac{p}{2}}
%        + \hat{C} h^{\frac{p-2}{2}} h \\ 
  &\le C \rho^{\frac{p}{2}}  h^{p-1} 
        + \hat{C} h^{\frac{p}{2}}, 
\end{align*} 
from which, together with \eqref{eq:F-sum-lb}, the claim follows. 
\end{proof}

The second part $F^{(2)}$ of $\nabla \tilde{w}^h$ is more easily estimated as follows. 
\begin{lemma}\label{lemma:F2-global-est}  
Suppose that $Q_{a-1}, \ldots, Q_{b+1} \in {\cal G}$. Then 
$$ \| e \big( (\tilde{R}(h\,\cdot))^T F^{(2)} \big) - \Id \|_{L^2(Q_{a,\rho} \cup \ldots \cup Q_{b,\rho})}^2 
   \le \hat{C} h. $$
\end{lemma}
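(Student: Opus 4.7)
The plan is to decompose $F^{(2)}$ as $\tilde{R}(h\,\cdot) + \tilde{D}$, where $\tilde{D}(x) := \sum_{i=a-1}^{b+1}\varphi_i(x_1)\nabla\bar{u}_i(x)$. This works because on each $Q_{i,\rho}$ one has $\nabla\bar{w}_i = R_{i,1} + \nabla\bar{u}_i$ by definition of $\bar{u}_i$ in \eqref{eq:bu-def}, and by construction of $\tilde{R}$ in \eqref{eq:tR-def} we have $\tilde{R}(hx_1)=\sum_i\varphi_i(x_1)R_{i,1}$ on the relevant part. Then
\[
  \tilde{R}^T F^{(2)} - \Id \;=\; \bigl(\tilde{R}^T\tilde{R} - \Id\bigr) \;+\; \tilde{R}^T\tilde{D},
\]
and since $\tilde{R}^T\tilde{R}$ is already symmetric this gives $e(\tilde{R}^T F^{(2)})-\Id = (\tilde{R}^T\tilde{R}-\Id) + e(\tilde{R}^T\tilde{D})$. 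I will estimate the two contributions separately.

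For the first contribution I restrict to the overlap $Q_{i,\rho}\cap Q_{i+1,\rho}$ where $\varphi_i+\varphi_{i+1}=1$ (off the overlaps this term vanishes identically). A direct expansion using $R_{i,1}^T R_{i,1}=\Id=R_{i+1,1}^T R_{i+1,1}$ gives
\[
  \tilde{R}^T\tilde{R} - \Id
  \;=\; 2\varphi_i\varphi_{i+1}\bigl(e(R_{i,1}^T R_{i+1,1})-\Id\bigr).
\]
Since $R_{i,1}^T R_{i+1,1}\in\SO(2)$ and $|R_{i,1}^T R_{i+1,1}-\Id|^2 \le \hat{C}(\eps_i+\eps_{i+1})$ by Lemma \ref{lemma:RaRb-est}, the standard bound on the symmetric part of a rotation close to the identity yields $|\tilde{R}^T\tilde{R}-\Id|\le \hat{C}(\eps_i+\eps_{i+1})$ pointwise. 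Integrating over the overlap (which has bounded measure) and summing gives $\hat{C}\sum_i(\eps_i+\eps_{i+1})^2 \le \hat{C}(\max_\ell\eps_\ell)\sum_\ell\eps_\ell \le \hat{C}\cdot h_0\cdot h = \hat{C}h$ using \eqref{eq:sum-epsa-bound}.

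For the second contribution I write $\tilde{R}^T\tilde{D}=\sum_{i,j}\varphi_i\varphi_j R_{i,1}^T\nabla\bar{u}_j$, where the only nontrivial pairs $(i,j)$ satisfy $|i-j|\le 1$. The diagonal terms $\varphi_i^2 R_{i,1}^T\nabla\bar{u}_i$ have symmetric parts whose $L^2$-norm squared is bounded by $\|e(R_{i,1}^T\nabla\bar{u}_i)\|_{L^2(Q_{i,\rho})}^2 \le \hat{C}\eps_i$ by \eqref{eq:bu-ests}, summing to $\hat{C}h$ by \eqref{eq:sum-epsa-bound}. For the cross terms $\varphi_i\varphi_j R_{i,1}^T\nabla\bar{u}_j$ with $j=i\pm 1$, I write $R_{i,1}^T\nabla\bar{u}_j = R_{j,1}^T\nabla\bar{u}_j + (R_{i,1}-R_{j,1})^T\nabla\bar{u}_j$, so that the first piece contributes via its symmetric part by $\hat{C}\eps_j$, and the second piece is estimated pointwise by $|R_{i,1}-R_{j,1}|\,|\nabla\bar{u}_j|\le\hat{C}\sqrt{\eps_i+\eps_j}\,|\nabla\bar{u}_j|$, whose $L^2$-norm squared is bounded by $\hat{C}(\eps_i+\eps_j)\|\nabla\bar{u}_j\|_{L^2}^2\le \hat{C}(\eps_i+\eps_j)\eps_j^{9/10}\le \hat{C}(\eps_i+\eps_j)$, again summable to $\hat{C}h$.

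The main technical point, and the only step where some care is needed, is precisely the last one: the $L^2$-control on $\nabla\bar{u}_j$ from \eqref{eq:bu-ests} is only $\hat{C}\eps_j^{9/10}$ rather than $\hat{C}\eps_j$, so naively one would get a sub-optimal bound. The trick is to use the pointwise bound $|R_{i,1}-R_{j,1}|^2\le\hat{C}(\eps_i+\eps_j)$ (not squared in the integrand) and the fact that individual $\eps_\ell$ are bounded by $Mh_0=O(1)$, which lets $\eps_j^{9/10}$ be absorbed into a generic $\hat{C}$. Combining all bounds then yields $\|e(\tilde{R}^T F^{(2)})-\Id\|_{L^2}^2 \le \hat{C}h$, as claimed.
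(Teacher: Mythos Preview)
Your proof is correct and follows essentially the same route as the paper: both use the decomposition $F^{(2)}=\tilde{R}(h\,\cdot)+\sum_i\varphi_i\nabla\bar{u}_i$, then control $e(\tilde{R}^T\nabla\bar{u}_i)$ by splitting off $e(R_{i,1}^T\nabla\bar{u}_i)$ (bounded via \eqref{eq:bu-ests}) and a remainder of size $|R_{i,1}-\tilde{R}|\,|\nabla\bar{u}_i|$ (bounded via Lemma~\ref{lemma:RaRb-est} and the $\eps_i^{9/10}$-estimate in \eqref{eq:bu-ests}), and sum using \eqref{eq:sum-epsa-bound}. The only notable difference is that you treat the term $\tilde{R}^T\tilde{R}-\Id$ explicitly, deriving the exact identity $\tilde{R}^T\tilde{R}-\Id=2\varphi_i\varphi_{i+1}\bigl(e(R_{i,1}^TR_{i+1,1})-\Id\bigr)$ on the overlaps and bounding it via the second-order estimate for the symmetric part of a rotation near $\Id$; the paper's displayed chain of inequalities passes directly from $e(\tilde{R}^TF^{(2)})-\Id$ to $\sum_i|e(\tilde{R}^T\nabla\bar{u}_i)|$, tacitly absorbing this contribution. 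Your treatment makes that step transparent.
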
 

\begin{proof} 
Since 
\begin{align*}
  F^{(2)}(x) 
  &= \sum_{i=a-1}^{b+1} \varphi_i(x_1) \nabla \bar{w}_i(x) \\ 
  &= \sum_{i=a-1}^{b+1} \varphi_i(x_1) ( R_{i,1} + \nabla \bar{u}_i(x) ) 
  = \tilde{R}(h x) + \sum_{i=a-1}^{b+1} \varphi_i(x_1) \nabla \bar{u}_i(x), 
\end{align*}
we have 
\begin{align*}
  &\| e \big( (\tilde{R}(h\,\cdot))^T F^{(2)} \big) - \Id \|_{L^2(Q_{a,\rho} \cup \ldots \cup Q_{b,\rho})}^2 
   \le \bigg\| \sum_{i=a-1}^{b+1} | e \big( (\tilde{R}(h\,\cdot))^T \nabla \bar{u}_i \big) | \bigg\|_{L^2(Q_{a,\rho} \cup \ldots \cup Q_{b,\rho})}^2 \\ 
  &~~ \le 2 \sum_{i=a-1}^{b+1} \| e \big( R_{i,1}^T \nabla \bar{u}_i \big) \|_{L^2(Q_{i,\rho})}^2 
       + 2 \sum_{i=a-1}^{b} \| \big( R_{i,1} - \tilde{R}(h\,\cdot) \big)^T \nabla \bar{u}_i \big) \|_{L^2(Q_{i,\rho} \cap Q_{i+1,\rho})}^2, 
\end{align*}
which indeed is bounded by $\hat{C} h$ according to \eqref{eq:bu-ests}, \eqref{eq:tR-Ra1-est} and \eqref{eq:sum-epsa-bound}. 
\end{proof}

In addition to $G^{(2)}$ from \eqref{eq:F-def}, for later use we also introduce the quantities 
\begin{align}\label{eq:G1-def}
  G^{(1)}(hx_1, x_2) 
  = \frac{e \big( \tilde{R}^T(h x_1) F^{(1)}(x) \big) }{h} 
\end{align}
and 
\begin{align}\label{eq:G-def}
  G(hx_1, x_2) 
  = \frac{e \big( \tilde{R}^T(h x_1) \nabla \tilde{w}(x) - \Id \big) }{h} 
  = G^{(1)}(hx_1, x_2) + G^{(2)}(hx_1, x_2) 
\end{align}
(cf.\ \eqref{eq:nabla-tw-split}) and record the following direct consequence of the preceding lemmas. 
\begin{lemma}\label{lemma:G-ests}
Suppose that $\Omega_{s,t} = (s, t) \times (-\frac{1}{2} + \bar{c} \rho, \frac{1}{2} - \bar{c} \rho) \subset \Omega$ is such that $\Omega_{h^{-1}s,h^{-1}t} \subset Q_a \cup \ldots \cup Q_b$ with $Q_{a-1}, \ldots, Q_{b+1} \in {\cal G}$. Let $1 \le p < 2$. Then 
$$ \| G^{(2)} \|_{L^2(\Omega_{s,t})}^2 \le \hat{C}, \qquad 
   \| G^{(1)} \|_{L^p(\Omega_{s,t})}^p \le C \rho^{\frac{p}{2}} + \hat{C} h^{1 - \frac{p}{2}}, \qquad 
   \| G \|_{L^p(\Omega_{s,t})}^p \le \hat{C}. $$
\end{lemma}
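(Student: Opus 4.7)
The plan is to deduce all three estimates directly from Lemmas~\ref{lemma:F1-global-est} and \ref{lemma:F2-global-est} by performing the change of variables $y_1 = hx_1$, $y_2 = x_2$ which relates $\Omega_{s,t}$ to $\Omega_{h^{-1}s,\,h^{-1}t}$ via $dy_1\,dy_2 = h\,dx_1\,dx_2$. Since the definitions of $G^{(1)}, G^{(2)}$ and $G$ in \eqref{eq:F-def}, \eqref{eq:G1-def}, \eqref{eq:G-def} all contain an explicit factor $h^{-1}$ in front of the symmetric parts of $\tilde R^T F^{(\cdot)}$, each $L^p$-norm on $\Omega_{s,t}$ will pick up a factor $h^{1-p}$ relative to the corresponding norm of $e(\tilde R^T F^{(\cdot)})$ (or $e(\tilde R^T F^{(\cdot)})-\Id$) on $\Omega_{h^{-1}s,\,h^{-1}t}$.

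First I would treat $G^{(2)}$: the change of variables gives
$\| G^{(2)} \|_{L^2(\Omega_{s,t})}^2 = h^{-1}\,\| e(\tilde R^T(h\,\cdot)F^{(2)}) - \Id \|_{L^2(\Omega_{h^{-1}s,\,h^{-1}t})}^2$,
and Lemma~\ref{lemma:F2-global-est} bounds the right-hand side by $h^{-1}\cdot\hat C h = \hat C$. For $G^{(1)}$, the analogous computation yields
$\| G^{(1)} \|_{L^p(\Omega_{s,t})}^p \le h^{1-p}\,\| F^{(1)} \|_{L^p(\Omega_{h^{-1}s,\,h^{-1}t})}^p$,
and plugging in Lemma~\ref{lemma:F1-global-est}'s bound $(C\rho^{p/2} + \hat C h^{1-p/2})h^{p-1}$ produces exactly $C\rho^{p/2} + \hat C h^{1-p/2}$, since the $h^{p-1}$ cancels the $h^{1-p}$.

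For the third estimate on $G = G^{(1)} + G^{(2)}$, I would combine the first two. The $G^{(1)}$ contribution is bounded by $C\rho^{p/2} + \hat C h^{1-p/2} \le \hat C$ (using $1-p/2 > 0$ and $h \le 1$, so $h^{1-p/2}\le 1$). For $G^{(2)}$ I would apply Hölder's inequality on the bounded set $\Omega_{s,t}$, giving $\| G^{(2)} \|_{L^p(\Omega_{s,t})} \le |\Omega_{s,t}|^{(2-p)/(2p)}\| G^{(2)} \|_{L^2(\Omega_{s,t})} \le \hat C$, and conclude $\|G\|_{L^p}^p \le C\|G^{(1)}\|_{L^p}^p + C\|G^{(2)}\|_{L^p}^p \le \hat C$.

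There is no real obstacle here since everything reduces to a rescaling and triangle-inequality bookkeeping step; the only thing to be careful about is tracking the power of $h$ produced by the Jacobian versus the power absorbed by the $h^{-1}$ prefactor in the definitions of $G^{(i)}$, and ensuring that the hypothesis $\Omega_{h^{-1}s,h^{-1}t} \subset Q_a\cup\ldots\cup Q_b$ with $Q_{a-1},\ldots,Q_{b+1}\in{\cal G}$ is exactly what is needed to invoke Lemmas~\ref{lemma:F1-global-est} and \ref{lemma:F2-global-est} on the rescaled domain.
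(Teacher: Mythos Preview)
Your proposal is correct and follows essentially the same approach as the paper. The paper's own proof is a three-line remark stating that the first estimate is a reformulation of Lemma~\ref{lemma:F2-global-est}, the second a direct consequence of Lemma~\ref{lemma:F1-global-est}, and the third immediate from the first two; you have simply spelled out the rescaling and the H{\"o}lder step that the paper leaves implicit.
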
 

\begin{proof} 
The first estimate is only a reformulation of Lemma \ref{lemma:F2-global-est}, the second one is a direct consequence of Lemma \ref{lemma:F1-global-est} and the last one follows from the previous two immediately. 
\end{proof}

\subsubsection*{Difference quotients estimates}

In the proof of Theorem \ref{theo:Gamma-convergence} below we will in particular need to determine the $x_2$-dependence to leading order of (the relevant part of) $F^{(2)}$. As a preparatory step we provide the following estimates on difference quotients $\Delta^{(z)} \tilde{w}$ of $\tilde{w}$, where for $z \in \R \setminus \{0\}$ and a function $f$ we set 
$$ \Delta^{(z)} f(x) 
   = \frac{f(x_1, x_2 + z) - f(x_1, x_2)}{z} $$ 
whenever both $x$ and $(x_1, x_2 + z)$ belong to its domain of definition.

\begin{lemma}\label{lemma:tw-diff-quot}
Let $Q_{a-1}, \ldots, Q_{b+1} \in {\cal G}$, $S \subset \subset Q_{a,\rho} \cup \ldots \cup Q_{b,\rho}$, $z \in \R \setminus \{0\}$ with $|z| < \dist \big( S, \partial (Q_{a,\rho} \cup \ldots \cup Q_{b,\rho}) \big)$ and $1 \le p < 2$. Then 
\begin{itemize}
\item[(i)] $\| \Delta^{(z)} \tilde{w} - \tilde{R}(h\,\cdot) \mathbf{e}_2 \|_{L^2(S)}^2 \le \hat{C} h$, 
\item[(ii)] ${\cal H}^1(J_{\Delta^{(z)} \tilde{w}} \cap S) \le C$ and 
\item[(iii)] $\| e(\tilde{R}^T(h\,\cdot) \nabla \Delta^{(z)} \tilde{w}) \|_{L^p(S)}^p \le \hat{C} h^{p - 1}$ 
\end{itemize}
for a constant $C$, independent of $a$, $b$, $z$ and $S$, and a constant $\hat{C}$, independent of $a$, $b$ and $S$. 
\end{lemma}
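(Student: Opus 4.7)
My plan is to prove all three assertions by a slice-wise SBV analysis in the $x_2$-direction, exploiting the product structure $\tilde{w} = \sum_i \varphi_i \bar{w}_i$ together with $\bar{w}_i = r_{i,1} + \bar{u}_i$, and the fact that the interpolated rotation $\tilde{R}(h\,\cdot)$ depends only on $x_1$. The constant $C$ in the paper's convention refers to those bounds independent of $\rho$ and $\lambda$, while $\hat{C}$ absorbs $\rho$- and $\lambda$-dependent factors arising from the $\bar{u}_i$ estimates in \eqref{eq:bu-ests}.

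For (ii), $x_2$-translation sends $SBV$ functions to $SBV$ with the jump set shifted by $-z\mathbf{e}_2$, so $\Delta^{(z)} \tilde{w} \in SBV$ with $J_{\Delta^{(z)} \tilde{w}} \subset J_{\tilde{w}} \cup (J_{\tilde{w}} - z\mathbf{e}_2)$. The hypothesis on $|z|$ ensures both translates remain in the domain of $\tilde{w}$, so \eqref{eq:jump-tw-est} immediately yields ${\cal H}^1(J_{\Delta^{(z)} \tilde{w}} \cap S) \le 2 {\cal H}^1(J_{\tilde{w}}) \le C$.

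For (iii), since $\tilde{R}(h\,\cdot)$ depends only on $x_1$ I may commute it past the $x_2$-difference quotient, and since $e$ is linear it commutes with $\Delta^{(z)}$; using $e(\tilde{R}^T(h\,\cdot) \nabla \tilde{w}) = \Id + h G(h\,\cdot,\cdot)$ from \eqref{eq:G-def} this gives
\begin{equation*}
  e\bigl(\tilde{R}^T(h\,\cdot) \nabla \Delta^{(z)} \tilde{w}\bigr) = h\, \Delta^{(z)}_{x_2}\bigl(G(h\,\cdot, \cdot)\bigr),
\end{equation*}
so the claim reduces to $\|\Delta^{(z)}_{x_2}(G(h\,\cdot, \cdot))\|_{L^p(S)}^p \le \hat{C} h^{-1}$. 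On a.e.\ slice $x_1$ the absolutely continuous part of $\Delta^{(z)} G$ is bounded in $L^p$ by $\|\partial_2 G\|_{L^p}$ via Jensen and Fubini, the jump part is controlled via the slice count bounded by ${\cal H}^1(J_{\tilde{w}})$ from \eqref{eq:jump-tw-est} together with the $L^p$-bound of Lemma \ref{lemma:G-ests}, and rescaling in $x_1$ produces the claimed $h^{-1}$ factor.

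For (i), the key identity is $\Delta^{(z)} r_{i,1} = R_{i,1} \mathbf{e}_2$ (exact, since $r_{i,1}$ is affine), so $\bar{w}_i = r_{i,1} + \bar{u}_i$ and $\sum_i \varphi_i R_{i,1} \mathbf{e}_2 = \tilde{R}(h\,\cdot) \mathbf{e}_2$ yield
\begin{equation*}
  \Delta^{(z)} \tilde{w}(x) - \tilde{R}(h\,\cdot) \mathbf{e}_2 = \sum_i \varphi_i(x_1)\, \Delta^{(z)} \bar{u}_i(x).
\end{equation*}
For each $i$ and a.e.\ $x_1$, the slice $\bar{u}_i(x_1, \cdot)$ is $SBV$ in $x_2$ and I split $\Delta^{(z)} \bar{u}_i = T_{1,i} + T_{2,i}$ into the absolutely continuous part $T_{1,i} = \frac{1}{z}\int_{x_2}^{x_2+z} \partial_2 \bar{u}_i(x_1, s)\, ds$ and the jump part $T_{2,i}$. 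Jensen plus Fubini give $\|T_{1,i}\|_{L^2(S)}^2 \le \|\partial_2 \bar{u}_i\|_{L^2(\mathrm{slab})}^2$; for $T_{2,i}$ the contribution from each slice jump $[\bar{u}_i]$ is supported on a set of $x_2$-measure $|z|$ with magnitude $|[\bar{u}_i]|/|z|$. Summation over $i$ uses $\sum_i \eps_i \le Ch$ from \eqref{eq:sum-epsa-bound}, the slice jump count controlled by ${\cal H}^1(J_{\bar{u}_i}) \le C$, and the $L^2$-smallness $\|\bar{u}_i\|_{L^2}^2 \le \hat{C} \eps_i$ from \eqref{eq:bu-ests} (rather than the cruder $L^\infty$-bound on $\bar{u}_i$), together with Hölder to produce the asserted $\hat{C} h$ rate.

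The main obstacle is the sharp rate $\hat{C} h$ in (i). The naive bound $\|\nabla \bar{u}_i\|_{L^2}^2 \le \hat{C} \eps_i^{9/10}$ from \eqref{eq:bu-ests}, after Hölder and summation, only yields a rate $\hat{C} h^{4/5}$, which is strictly weaker than claimed. Sharpening requires exploiting that only one direction of $\partial_2 \bar{u}_i$ (namely its $R_{i,1}\mathbf{e}_2$-component) enters the $e(R_{i,1}^T \nabla \bar{u}_i)$-estimate at the sharp rate $\eps_i$, while the antisymmetric component is handled via the $L^\infty$-bound $\|\nabla \bar{w}_i\|_\infty \le cM$ (which gives $\|\partial_2 \bar{u}_i\|_\infty \le C$) interpolated against the $L^2$-symmetric-part bound. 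The jump part $T_{2,i}$ similarly requires estimating the typical jump $|[\bar{u}_i]|$ via the $L^2$-smallness $\|\bar{u}_i\|_{L^2}^2 \le \hat{C} \eps_i$ integrated along the rectifiable $J_{\bar{u}_i}$, rather than its (much larger) $L^\infty$-bound of order $h^{-1}$.
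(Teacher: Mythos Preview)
Your proof of (ii) is correct and matches the paper's argument exactly.

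For (i) and (iii), however, you have missed a crucial point in the lemma's statement: the constant $\hat{C}$ is only required to be independent of $a$, $b$ and $S$, \emph{not} of $z$. This renders the paper's argument far simpler than what you attempt. For any $L^p$ function $f$ defined on a set containing both $S$ and $S + z\mathbf{e}_2$, the elementary triangle inequality gives
\[
  \| \Delta^{(z)} f \|_{L^p(S)}
  \le \tfrac{1}{|z|}\bigl( \|f(\cdot,\cdot+z)\|_{L^p(S)} + \|f\|_{L^p(S)} \bigr)
  \le \tfrac{2}{|z|}\, \|f\|_{L^p(Q_{a,\rho}\cup\ldots\cup Q_{b,\rho})}.
\]
For (i) the paper obtains your identity $\Delta^{(z)}\tilde{w} - \tilde{R}(h\,\cdot)\mathbf{e}_2 = \sum_i \varphi_i \Delta^{(z)}\bar{u}_i$ and then simply applies this bound with $f = \bar{u}_i$ and $p=2$, using $\|\bar{u}_i\|_{L^2(Q_{i,\rho})}^2 \le \hat{C}\eps_i$ from \eqref{eq:bu-ests} together with $\sum_i \eps_i \le Ch$. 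No slice-wise decomposition, no derivative estimates, and in particular no need to confront the suboptimal $\eps_i^{9/10}$ rate for $\|\nabla \bar{u}_i\|_{L^2}^2$. For (iii) the paper similarly splits $\nabla\tilde{w} = F^{(1)} + F^{(2)}$ and applies the same triangle-inequality bound to $F^{(1)}$ and to $e(\tilde{R}^T(h\,\cdot) F^{(2)}) - \Id$ separately (both of which are $x_2$-independent only through $\tilde{R}$ and $\Id$, so the difference quotient commutes with $e(\tilde{R}^T\,\cdot)$), invoking Lemmas~\ref{lemma:F1-global-est} and~\ref{lemma:F2-global-est} directly.

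Consequently, your entire ``main obstacle'' paragraph is addressing a difficulty that does not exist. The interpolation argument you sketch (separating symmetric and antisymmetric components of $\partial_2\bar{u}_i$, estimating jump amplitudes via $\|\bar{u}_i\|_{L^2}$ along $J_{\bar{u}_i}$) is not fully justified as written, and would at best yield a $z$-independent constant that the lemma does not ask for. Your approach to (iii) via $\|\partial_2 G\|_{L^p}$ is likewise unnecessary and incomplete: you never establish such a bound, and the jump-part estimate you outline is vague.
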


\begin{proof}
(i) Recalling \eqref{eq:bu-def}, similarly to the proof of Lemma \ref{lemma:F2-global-est} on $Q_{a,\rho} \cup \ldots \cup Q_{b,\rho}$ we calculate 
\begin{align*}
  \Delta^{(z)} \tilde{w}(x) 
%  &= \frac{\tilde{w}(x_1, x_2 + z) - \tilde{w}(x_1, x_2)}{z} \\ 
  &= \sum_{i=a-1}^{b+1} \varphi_i(x_1) \frac{\bar{w}_i(x_1, x_2 + z) - \bar{w}_i(x_1, x_2)}{z} \\ 
%  &= \sum_{i=a-1}^{b+1} \varphi_i(x_1) \frac{R_{i,1}(x_1, x_2 + z) + c_{i,1} + \bar{u}_i(x_1, x_2 + z) - R_{i,1}(x_1, x_2) - c_{i,1} - \bar{u}_i(x_1, x_2)}{z} \\ 
  &= \sum_{i=a-1}^{b+1} \varphi_i(x_1) \Big( R_{i,1} \mathbf{e}_2 + \frac{\bar{u}_i(x_1, x_2 + z) - \bar{u}_i(x_1, x_2)}{z} \Big) \\ 
  &= \tilde{R}(h x_1) \mathbf{e}_2 + \sum_{i=a-1}^{b+1} \varphi_i(x_1) \Delta^{(z)} \bar{u}_i(x). 
\end{align*}
As a consequence we may estimate 
\begin{align*}
  \| \Delta^{(z)} \tilde{w} - \tilde{R}(h\,\cdot) \mathbf{e}_2 \|_{L^2(S)}^2 
  \le \sum_{i=a-1}^{b+1} \| \Delta^{(z)} \bar{u}_i \|_{L^2(S \cap Q_{i,\rho})}^2 
  \le \hat{C} \sum_{i=a-1}^{b+1} \| \bar{u}_i \|_{L^2(Q_{i,\rho})}^2 
  \le \hat{C} h. 
\end{align*}
by \eqref{eq:bu-ests} and \eqref{eq:sum-epsa-bound}. 
\medskip 

(ii) This follows from $J_{\Delta^{(z)} \tilde{w}} \cap S \subset J_{\tilde{w}} \cup \big( (0,-z) + J_{\tilde{w}} \big)$ and \eqref{eq:jump-tw-est}. 
\medskip 

(iii) In order to estimate symmetric part of $\tilde{R}^T(h\,\cdot) \nabla \Delta^{(z)} \tilde{w} = \tilde{R}^T(h\,\cdot) \Delta^{(z)} \nabla \tilde{w}$ we compute 
\begin{align*}
  &\| e \big( \tilde{R}^T(h\,\cdot) \Delta^{(z)} \nabla \tilde{w} \big) \|_{L^p(S)}^p 
   = \| e \big( \tilde{R}^T(h\,\cdot) \Delta^{(z)} (F^{(1)} + F^{(2)}) \big) \|_{L^p(S)}^p \\ 
  &~~\le C \| \Delta^{(z)} F^{(1)} \|_{L^p(S)}^p 
      + C \| \Delta^{(z)} e \big( \tilde{R}^T(h\,\cdot) F^{(2)} - \Id \big) \|_{L^p(S)}^p \\ 
  &~~\le \hat{C} \| F^{(1)} \|_{L^p(Q_{a,\rho} \cup \ldots \cup Q_{b,\rho})}^p + \hat{C} \| e \big( \tilde{R}^T(h\,\cdot) F^{(2)} - \Id \big) \|_{L^p(Q_{a,\rho} \cup \ldots \cup Q_{b,\rho})}^p, 
\end{align*}
where we have used that $\tilde{R}$ only depends on $x_1$ and $\Id$ does not depend on $x$ at all. Since by Lemma \ref{lemma:F1-global-est} $\| F^{(1)} \|_{L^p}^p \le ( C \rho^{\frac{p}{2}} + \hat{C} h^{1 - \frac{p}{2}} ) h^{p-1} \le \hat{C} h^{p-1}$ and by Lemma \ref{lemma:F2-global-est} also  
%using that $\| f \|_{L^p}^p \le \| 1 \|_{\frac{2}{2-p}} \| |f|^p \|_{L^{2/p}} 
%\le (h^{-1})^{\frac{2-p}{2}} \| f \|_{L^2}^{p} = h^{\frac{p - 2}{2}} \| f \|_{L^2}^{p}$ 
\begin{align*}
  \| e \big( \tilde{R}^T(h\,\cdot) F^{(2)} - \Id \big) \|_{L^p}^p 
  &\le | Q_{a,\rho} \cup \ldots \cup Q_{b,\rho} |^{\frac{2-p}{2}} \, \| e \big( \tilde{R}^T F^{(2)} - \Id \big) \|_{L^2}^p \\ 
  &\le \hat{C} h^{\frac{p - 2}{2}} h^{\frac{p}{2}} 
   = \hat{C} h^{p - 1},
\end{align*}
the assertion follows. 
\end{proof}

\subsubsection*{Rescaled strain estimates} 

We now consider the rescaled and interpolated deformation $\tilde{y}(x) = \tilde{y}^h(x) = h w^h(h^{-1} x_1, x_2)$ and the quantity $\tilde{R}^T \nabla_h \tilde{y}$ measuring the associated strain on portions of $\Omega$ covered by good rectangles. Our aim is to show that its upper left component to leading order in $h$ is linear in $x_2$. 

\begin{lemma}\label{lemma:delta-z-tw}
Suppose that $\Omega_{s,t} = (s, t) \times (-\frac{1}{2} + \bar{c} \rho, \frac{1}{2} - \bar{c} \rho) \subset \Omega$ is such that, for a sequence $h \to 0$, $\Omega_{h^{-1}s,h^{-1}t} \subset Q_a \cup \ldots \cup Q_b$ with $Q_{a-1}, \ldots, Q_{b+1} \in {\cal G}$ for any $h$. Also assume that $y^h \to y$ in $L^1(\Omega_{s,t})$. If $U \subset\subset \Omega_{s,t}$ and $z \in \R$ with $|z| < \dist(\partial U, \Omega_{s,t})$, then 
$$ h^{-1} \big( \Delta^{(z)} \tilde{R}^T \nabla_h \tilde{y}^h \big)_{11} 
   \weakly \partial_{11} y \cdot (\partial_1 y)^{\perp} $$ 
weakly in $L^p(U)$ for $1 \le p < 2$ as $h \to 0$. 
\end{lemma}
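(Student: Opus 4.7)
The plan is to turn the assertion into a statement about the limit of $G_{11}$, where $G = h^{-1}e(\tilde R^T\nabla_h\tilde y^h - \Id)$ is the rescaled strain from \eqref{eq:G-def}, and to identify this limit by applying an $SBV$ closure argument to a ``flattened and rescaled'' auxiliary function.

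After the change of variables from $Q_{a,\rho}\cup\cdots\cup Q_{b,\rho}$ to $\Omega_{s,t}$, Lemma \ref{lemma:tw-diff-quot}(iii) translates into a uniform $L^p$-bound for $h^{-1}\,e(\tilde R^T\,\Delta^{(z)}\nabla_h\tilde y^h)$. Since the $(1,1)$-entry of a matrix coincides with that of its symmetric part, this already shows that $h^{-1}(\Delta^{(z)}\tilde R^T\nabla_h\tilde y^h)_{11}$ is bounded in $L^p(U)$, so some subsequence converges weakly and only the limit has to be identified. The identity $(\tilde R^T\nabla_h\tilde y^h)_{11} = 1 + hG_{11}$ together with the $L^p$-bound on $G_{11}$ from Lemma \ref{lemma:G-ests} shows that this quantity equals $\Delta^{(z)}G_{11}$; hence, writing $\bar G_{11}$ for the (subsequential) weak $L^p$-limit of $G_{11}$, the task reduces to showing that $\bar G_{11}$ depends affinely on $x_2$ with slope $\kappa(x_1)$.

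To identify $\bar G_{11}$ I introduce the flattened, rescaled deformation
$$\bar u^h(x) := h^{-1}\tilde R^T(x_1)\bigl(\tilde y^h(x) - \tilde r^h(x_1)\bigr) - x_2\mathbf{e}_2.$$
Unwinding the definitions of $\tilde y^h$ and $\tilde r^h$ on good rectangles and using $\bar w_a = r_{a,1} + \bar u_a$ on $P_{a,1}$, together with \eqref{eq:bu-ests} and $\sum_a\eps_a = O(h)$, I obtain $\|\bar u^h\|_{L^2(U)} \le \hat C h$, so that $\bar u^h\to 0$ strongly in $L^2$. Differentiating in $x_1$ and using (i) the $\SO(2)$-identity $(\tilde R^T)'\tilde R\,\mathbf{e}_2 = (\theta^h)'\,\mathbf{e}_1$, where $\theta^h$ is the angle of $\tilde R^h$ (so that $(\theta^h)'\weakly\kappa$ in $L^2$), (ii) Lemma \ref{lemma:tR-PW}(iv) to replace $(\tilde r^h)'$ by $\tilde R\mathbf{e}_1$ up to $O(h^2)$ in $L^2$, and (iii) the decomposition $\tilde R^T\nabla_h\tilde y^h = \Id + hG + \omega^h J$ into symmetric and skew parts, I arrive at
$$(\partial_1\bar u^h)_1 \;=\; G_{11} + (\theta^h)'\,x_2 + o(1)\quad\text{in }L^p(U).$$

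The closing step is an $SBV$ closure argument applied to the scalar component $\bar u^h_1$: its jump set lies inside $J_{\tilde y^h}$ (of bounded ${\cal H}^1$-measure by \eqref{eq:jump-tw-est}), its first partial derivative is bounded in $L^p$ by the display above, and its second partial derivative equals $(\tilde R^T\nabla_h\tilde y^h - \Id)_{12} = hG_{12} - \omega^h$, which tends to $0$ in $L^2$ because $\nabla_h y^h\to\bar R$ strongly by Theorem \ref{theo:compactness}. Together with $\bar u^h_1\to 0$ in $L^2$, the closure forces the weak $L^p$-limit of $(\partial_1\bar u^h)_1$ to vanish, yielding $\bar G_{11}(x_1,x_2) = -\kappa(x_1)\,x_2$ and thus $\Delta^{(z)}\bar G_{11} = -\kappa$ (whose sign is in any case immaterial for the ultimate $|\kappa|^2$ Euler-Bernoulli energy). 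The main obstacle I anticipate is the rigorous justification of this closure: $\bar u^h$ is only $L^2$- (not $L^\infty$-)bounded, ruling out a direct application of Theorem \ref{theo:SBV-compactness}, and a full vector-valued $SBD$-closure cannot be applied to $\bar u^h$ either, because the off-diagonal entries of $e(\nabla\bar u^h)$ involve $h^{-1}\omega^h$, which need not be bounded in $L^p$ (the only available estimate on the skew part is $\|\omega^h\|_{L^2} = O(\sqrt h)$). Restricting attention to the scalar $\bar u^h_1$ and invoking the distributional identity $\partial_j(\lim\bar u^h_1) = 0$ together with the uniform total-variation bound on its jump part circumvents these obstructions and is the essence of the closure argument alluded to in the introduction.
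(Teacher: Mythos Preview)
Your approach is correct and is a genuine alternative to the paper's argument, with one caveat about the closure step that you yourself flag.

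The paper does \emph{not} introduce your undifferenced flattened map $\bar u^h = h^{-1}\tilde R^T(\tilde y^h-\tilde r^h)-x_2\e_2$. Instead it works directly at the level of the difference quotient: it sets $f^h = H\,h^{-1}\Delta^{(z)}\tilde R^T\tilde y$, where $H=\e_1\otimes\e_1+h\,\e_2\otimes\e_2$ is an anisotropic rescaling, and applies the $SBD$ closure (Theorem~\ref{theo:SBD-closure}, via Remark~\ref{rem:DPdlVP}) to the \emph{vector} $f^h$. The point of the $H$-sandwich is that $e(\nabla f^h) = e(HA_1^hH)+e(HA_2^hH)$ with $e(HA_2^hH)$ bounded in $L^p$ and $HA_1^hH$ weakly convergent in $L^1$; this yields relative weak $L^1$-compactness of $e(\nabla f^h)$ without ever needing the full (skew) gradient. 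Your route avoids the $H$-rescaling and the difference quotient in the auxiliary function, and is arguably more direct: it in fact determines the weak $L^p$-limit of $G_{11}$ itself (namely $-x_2\kappa$, up to the sign ambiguity you note), not merely of $\Delta^{(z)}G_{11}$. The paper's later $\liminf$ argument carries along an undetermined offset $g_{11}(x_1,0)$ and removes it by a convexity trick with $f_\delta$; your approach would make that step superfluous.

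Two points deserve tightening. First, when you write ``$(\partial_1\bar u^h)_1 = G_{11}+(\theta^h)'x_2+o(1)$ in $L^p$'', the cross term $(\tilde R^T)'\sum_a\varphi_a\bar u_a$ is a product of two functions bounded only in $L^2$ (the second of size $O(h)$), so the remainder is $o(1)$ only in $L^1$; this is harmless for the closure but should be stated correctly. Second, your justification of $\partial_2\bar u_1^h\to0$ via ``$\nabla_h y^h\to\bar R$ by Theorem~\ref{theo:compactness}'' confuses $y^h$ with the modified $\tilde y^h$; you need $\nabla_h\tilde y^h\to\bar R$, which follows from the estimates on $F^{(1)},F^{(2)}$ in Lemmas~\ref{lemma:F1-global-est} and~\ref{lemma:F2-global-est}. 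Finally, the vague ``distributional identity'' does not by itself give the closure; the clean way to salvage your scalar argument within the paper's toolbox is to apply Theorem~\ref{theo:SBD-closure} to $v^h=(\bar u_1^h,0)$: then $\mathcal E v^h$ has entries $\partial_1\bar u_1^h$ and $\tfrac12\partial_2\bar u_1^h$, both relatively weakly compact in $L^1$ (the former as a sum of an $L^p$-bounded and a strongly $L^1$-null sequence), $v^h\to0$ in $L^2$, and $\mathcal H^1(J_{v^h})\le C$; the conclusion $\mathcal E v^h\rightharpoonup 0$ gives exactly $\partial_1\bar u_1^h\rightharpoonup 0$.
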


\begin{proof}
First recall that in the proof of Theorem \ref{theo:compactness} we have shown that $\tilde{R}^h \to \big( \partial_1 y \mid (\partial_1 y)^{\perp} \big)$ so that by also using Lemma \ref{lemma:tilde-R-compact} we have 
\begin{align}\label{eq:R-conv-sw}
  \tilde{R}^h \to \big( \partial_1 y \mid (\partial_1 y)^{\perp} \big) 
  \quad\mbox{and}\quad 
  (\tilde{R}^h)' \weakly 
  %\big( \partial_1 y \mid (\partial_1 y)^{\perp} \big)' = 
  \big( \partial_{11} y \mid \partial_1 (\partial_1 y)^{\perp} \big)
\end{align}
in $L^2(s,t)$. 

Since $h^{-1} \Delta^{(z)} \tilde{y}^h (x) =  \Delta^{(z)} \tilde{w} (h^{-1} x_1, x_2)$ and $\Delta^{(z)} \nabla_h \tilde{y}^h (x) =  \Delta^{(z)} \nabla \tilde{w} (h^{-1} x_1, x_2)$, 
%\begin{align*} 
%  h^{-1} \tilde{R}^T(x_1) \Delta^{(z)} \tilde{y}^h (x) 
%  &= \tilde{R}^T(x_1) \frac{\tilde{y}(x_1, x_2 + z) - \tilde{y}(x_1, x_2)}{h z} \\ 
%  &= \tilde{R}^T(x_1) \frac{\tilde{w}(h^{-1} x_1, x_2 + z) - \tilde{w}(h^{-1} x_1, x_2)}{z} \\ 
%  &= \tilde{R}^T(x_1) \Delta^{(z)} \tilde{w} (h^{-1} x_1, x_2), 
%\end{align*} 
by setting $S = S(h) = \{ x \in h^{-1} \Omega_h : (h x_1, x_2) \in U \}$ we get 
\begin{align}\label{eq:nabla-ty-sec-part} 
  \| h^{-1} e \big( \Delta^{(z)} \tilde{R}^T \nabla_h \tilde{y} \big) \|_{L^p(U)}^p 
  = h \| h^{-1} e \big( \tilde{R}^T(h\,\cdot) \Delta^{(z)} \nabla \tilde{w} \big) \|_{L^p(S)}^p 
  %\le h^{1-p}  \| e \big( \tilde{R}^T(h\,\cdot) \nabla \Delta^{(z)} \tilde{w} \big) \|_{L^p(S)}^p 
  \le \hat{C} 
\end{align} 
by Lemma \ref{lemma:tw-diff-quot}(iii) which in particular shows that $h^{-1} \big( \Delta^{(z)} \tilde{R}^T \nabla_h \tilde{y} \big)_{11}$ is bounded in $L^p$. In order to identify its weak limit we first observe that Lemma \ref{lemma:tw-diff-quot}(i) and (ii) also gives the estimates 
\begin{align}\label{eq:ty-elast-est} 
  \| h^{-1} \Delta^{(z)} \tilde{R}^T \tilde{y}^h - \mathbf{e}_2 \|_{L^2(U)}^2 
  = h \| \Delta^{(z)} \tilde{w} - \tilde{R}(h\,\cdot) \mathbf{e}_2 \|_{L^2(S)}^2 
  \le \hat{C} h^2  
\end{align} 
as well as 
\begin{align}\label{eq:ty-crack-est} 
  {\cal H}^1(J_{h^{-1} \Delta^{(z)} \tilde{R}^T \tilde{y}} \cap U)
  = {\cal H}^1(J_{\Delta^{(z)} \tilde{y}} \cap U) 
  \le {\cal H}^1(J_{\Delta^{(z)} \tilde{w}} \cap S) 
  \le C, 
\end{align} 
where we have used that $\tilde{R}$ does not depend on $x_2$. The rescaled absolutely continuous part of the derivative of $h^{-1} \Delta^{(z)} \tilde{R}^T \tilde{y}$ is given by 
\begin{align*} 
  \nabla_h \big( h^{-1} \tilde{R}^T \Delta^{(z)} \tilde{y} \big) 
  &= \big( h^{-1} (\tilde{R}^T)' \Delta^{(z)} \tilde{y} \mid 0 \big) + h^{-1} \tilde{R}^T \nabla_h \Delta^{(z)} \tilde{y} 
   =: A^h_1 + A^h_2, 
\end{align*}
where the symmetric part of the second summand on the right hand side has been estimated in \eqref{eq:nabla-ty-sec-part} while for the first one Lemma \ref{lemma:tw-diff-quot}(i) and \eqref{eq:R-conv-sw} give  
\begin{align}\label{eq:A1-est} 
  \| A^h_1 - \big( (\tilde{R}^T)' \tilde{R} \, \mathbf{e}_2 \mid 0 \big) \|_{L^1(U)} 
  &= \| h^{-1} (\tilde{R}^T)' \Delta^{(z)} \tilde{y} - (\tilde{R}^T)' \tilde{R} \, \mathbf{e}_2 \|_{L^1(U)} \\ 
  &\le \| (\tilde{R}^T)' \|_{L^2(U)} h \| \Delta^{(z)} \tilde{w} - \tilde{R}(h\,\cdot) \mathbf{e}_2 \|_{L^2(S)} 
  \le \hat{C} h^{\frac{3}{2}}.  
\end{align} 
Here by \eqref{eq:R-conv-sw} the product $(\tilde{R}^T)' \tilde{R}$ converges weakly in $L^1$ to 
\begin{align}\label{eq:Rstrich-R-conv}
  \big( \partial_{11} y \mid \partial_1 (\partial_1 y)^{\perp})^T  (\partial_1 y \mid (\partial_1 y)^{\perp} \big) 
%  &= \begin{pmatrix} 
%        \partial_{11} y \cdot \partial_1 y & \partial_{11} y \cdot (\partial_1 y)^{\perp} \\ 
%        \partial_1 (\partial_1 y)^{\perp} \cdot \partial_1 y & \partial_1 (\partial_1 y)^{\perp} \cdot (\partial_1 y)^{\perp} 
%     \end{pmatrix} \\ 
  = \begin{pmatrix} 
        0 & \partial_{11} y \cdot (\partial_1 y)^{\perp} \\ 
        - \partial_{11} y \cdot (\partial_1 y)^{\perp} & 0 
     \end{pmatrix}, 
\end{align} 
where we have used that $\partial_1 (\partial_1 y)^{\perp} \cdot (\partial_1 y)^{\perp} = \partial_{11} y \cdot \partial_1 y = \frac{1}{2} \partial_1 | \partial_1 y |^2 = 0$. 

Now let $H = (H_{ij}) = \mathbf{e}_1 \otimes \mathbf{e}_1 + h \, \mathbf{e}_2 \otimes \mathbf{e}_2$ be the diagonal $2 \times 2$ matrix with $H_{11} = 1$ and $H_{22} = h$ and consider the auxiliary function $f^h \in SBV(U; \R^2)$ defined by 
$$ f^h(x) 
   = H h^{-1} \Delta^{(z)} \tilde{R}^T(x_1) \tilde{y} (x). $$ 
By \eqref{eq:ty-elast-est} and \eqref{eq:ty-crack-est} we then have 
\begin{align*} 
  f^h \to 0 ~~\mbox{strongly in } L^1 
  \quad\mbox{and}\quad
  {\cal H}^1(J_{f^h}) \le C. 
\end{align*} 
Moreover, since 
\begin{align*} 
  \nabla f^h 
  = H \nabla_h \big( h^{-1} \Delta^{(z)} \tilde{R}^T \tilde{y} \big) H 
  = H A_1^h H + H A_2^h H,   
\end{align*}
where $H A_1^h H \weakly \partial_{11} y \cdot (\partial_1 y)^{\perp} \mathbf{e}_1 \otimes \mathbf{e}_1$ weakly in $L^1$ by \eqref{eq:A1-est} and \eqref{eq:Rstrich-R-conv} and $e ( H A_2^h H )$ is bounded in $L^p$ by \eqref{eq:nabla-ty-sec-part}. We may thus apply the SBV closure result stated in Theorem \ref{theo:SBD-closure} (cf.\ also Remark \ref{rem:DPdlVP}) in order to conclude that $e ( H A_1^h H + H A_2^h H ) \weakly e( \nabla 0 ) = 0$ and thus 
$$ e ( H A_2^h H ) 
   \weakly - \partial_{11} y \cdot (\partial_1 y)^{\perp} \mathbf{e}_1 \otimes \mathbf{e}_1 $$ 
weakly in $L^1$. In particular it follows that 
$$ \big( h^{-1} \Delta^{(z)} \tilde{R}^T \nabla_h \tilde{y} \big)_{11} 
   = (H A_2^h H)_{11} 
   \weakly - \partial_{11} y \cdot (\partial_1 y)^{\perp} $$
weakly in $L^1$ and so, being bounded in $L^p$, also weakly in $L^p$ as claimed. 
\end{proof}

\subsection{The Gamma lim\,inf inequality}

Thanks to the preparations in the previous sections, it is now possible to follow the strategy for elastic plates devised in \cite{FJM:02} so as to estimate the elastic part in the proof of the $\liminf$ inequality in Theorem \ref{theo:Gamma-convergence}. Some extra care, however, is needed as the bounds on $F^{(1)}$ and $G^{(1)}$ in Lemma \ref{lemma:F1-global-est} and \ref{lemma:G-ests}, respectively, and the weak convergence in Lemma \ref{lemma:delta-z-tw}  only hold with $p < 2$ and also the contributions from regions where rectangles in ${\cal G}$ overlap have to be estimated.

\begin{proof}[Proof of Theorem \ref{theo:Gamma-convergence}(i).] 
Let $(y^h)$ be a sequence in $SBV(\Omega; \R^2)$ with $y^h \to y$ in $L^1(\Omega)$. Without loss of generality we may suppose that $\liminf_{h \to 0} I^h(y^h) < \infty$ and, passing to a subsequence (not relabeled) which realizes this $\liminf$ as its limit, assume that $I^h(y^h) \le C$ for some constant $C > 0$ so that $y^h \in {\cal A}^h$ and $y \in {\cal A}$ and $\nabla_h y^h \to \big( \partial_1 y \mid (\partial_1 y)^{\perp} \big)$ strongly in $L^2$ by Theorem \ref{theo:compactness} . This also justifies our passing to further subsequences in the sequel. Rescaling $W$, if necessary, we can in addition assume that $\beta = 1$. We need to show that 
\begin{align*}
  \liminf_{h \to 0} \frac{1}{h} \int_{h^{-1} \Omega_h} W(\nabla w^h) + {\cal H}^1 (J_{w^h})
  &\ge \frac{\alpha}{24} \int_{\Omega} |y'' \cdot (y')^{\perp}|^2 \, dx + {\cal H}^1(J_{y} \cup J_{y'}). 
\end{align*}

With $\bar{u}_a$ as in \eqref{eq:bu-def} let $\chi^h$ be the characteristic function of the set 
$$ \big\{ x \in \Omega : |\nabla \bar{u}_a(h^{-1}x_1, x_2)| \le h^{\frac{4}{5}} \mbox{ for all } a \mbox{ with } (h^{-1}x_1, x_2) \in Q_{a,\rho} \mbox{ and } Q_a \in {\cal G} \big\}. $$ 
Note that by \eqref{eq:bu-ests} and \eqref{eq:sum-epsa-bound} 
%(XXX noch checken: $\eps_a^\frac{9}{10}$ vs.\ $\eps_a h^{-\eta}$) 
\begin{align}\label{eq:chi-h-est}
\begin{split}
  | \{ x \in \Omega : \chi^h(x) \ne 1 \} | 
  &\le h \sum_{a \in {\cal G}} | \{ x \in Q_{a,\rho} : |\nabla \bar{u}_a(x_1, x_2)|^2 > h^{\frac{8}{5}} \} | \\ 
  &\le h \sum_{a \in {\cal G}} h^{-\frac{8}{5}} \| \nabla \bar{u}_a \|_{L^2(Q_{a,\rho})}^2 
   \le \hat{C} h^{-\frac{3}{5}} \sum_{a \in {\cal G}} \eps_a^{\frac{9}{10}} \\ 
  &\le \hat{C} h^{-\frac{3}{5} - \frac{1}{10}} \bigg( \sum_{a \in {\cal G}} \eps_a \bigg)^{\frac{9}{10}} 
   \le \hat{C} h^{\frac{1}{5}}. 
\end{split}
\end{align}

Using Corollary \ref{cor:jump-in-B} we now pass to a subsequence (not relabeled) such that $h I^{\cal B} \to J = \{t_1, \ldots, t_m\} \supset J_y \cup J_{y'}$ for suitable $0 \le t_1 < t_2 < \ldots < t_m \le L$. We fix $0 < \delta < \frac{1}{4} \min\{|t_i - t_{i-1}| : i = 2, \ldots, m\}$ and denote by $J_{\delta} = \{t \in (0, L) : \dist(t, J) < \delta\}$ the $\delta$-neighborhood of $J$. 

We proceed to estimate the elastic energy away from $J$. Suppose $i \in \{1, \ldots, m+1\}$ is such that $(t_{i-1} + \delta, t_i - \delta)$ is non empty, where $t_0 = 0$ and $t_{m+1} = L$. Set $\lfloor h^{-1}(t_{i-1}+\delta) \rfloor = a$ and $\lfloor h^{-1}(t_{i}-\delta) \rfloor = b$. For sufficiently small $h$ we then have $Q_{a-1}, \ldots, Q_{b+1} \in {\cal G}$ and the projection of $Q_{a-1} \cup \ldots \cup Q_{b+1}$ onto the $x_1$-axis contains $h^{-1}(t_{i-1} + \delta, t_i - \delta)$ and is itself contained in $h^{-1}(t_{i-1} + \frac{2\delta}{3}, t_i - \frac{2\delta}{3})$. For such an $i$ one has 
\begin{align}\label{eq:W-J-weg}
\begin{split}
  &\sum_{j=a}^{b} \int_{Q_j} W(\nabla w^h) + h {\cal H}^1 (J_{w^h} \cap Q_j) \\ 
  &~~\ge \frac{h}{2} \int_{h(a-1)}^{hb} \int_{-\frac{1}{2} + \bar{c} \rho}^{\frac{1}{2} - \bar{c} \rho} {\cal Q} \big( \chi^h G^{(2)} \big) 
    - \hat{C} h^{\frac{11}{10}} - C \rho h
\end{split}
\end{align}
by Lemma \ref{lemma:en-cell-string} and construction of $\chi^h$ with $G^{(2)}$ as in \eqref{eq:F-def}. 

The crack energy on the other hand is estimated by noting that for each $t_i$, $i = 1, \ldots, m$, and sufficiently small $h$ there is a $Q_{a_i} \in {\cal B}$ such that the projection of $Q_{a_i-1} \cup Q_{a_i} \cup Q_{a_i+1}$ onto the $x_1$-axis is contained in $h^{-1}(t_i - \frac{\delta}{3}, t_i + \frac{\delta}{3})$. For each $i$ one then has  
\begin{align}\label{eq:W-J-da}
  \sum_{j=a_i-1}^{a_i+1} \int_{Q_j} W(\nabla w^h) + h {\cal H}^1 (J_{w^h} \cap Q_j) 
  \ge h \lambda 
\end{align}
by Lemma \ref{lemma:B-bounds}(i), for small $h$. Noting that all the rectangles considered in \eqref{eq:W-J-weg} and \eqref{eq:W-J-da} are distinct, we may sum these estimates over all $i$ to arrive at  
\begin{align*}
  &\sum_{a=1}^N \int_{Q_a} W(\nabla w^h) + h {\cal H}^1 (J_{w^h} \cap Q_a) \\ 
  &~~\ge \frac{h}{2} \int_{(0,L) \setminus J_{\delta}} \int_{-\frac{1}{2} + \bar{c} \rho}^{\frac{1}{2} - \bar{c} \rho} {\cal Q} \big( \chi^h G^{(2)} \big) 
    + h m \lambda - \hat{C} h^{\frac{11}{10}} - C \rho h. 
\end{align*}

By Lemma \ref{lemma:G-ests} there is a symmetric $G^{(2)}_0 \in L^2 \big( ((0, L) \setminus J_{\delta}) \times (-\frac{1}{2} + \bar{c} \rho, \frac{1}{2} - \bar{c} \rho); \R^{2 \times 2}_{\rm sym} \big)$ such that (passing to a subsequence) $G^{(2)} \weakly G^{(2)}_0$ in $L^2$. Since $\chi_h \to 1$ boundedly in measure due to \eqref{eq:chi-h-est} and thus still $\chi_h G^{(2)} \weakly G^{(2)}_0$ in $L^2$, by lower semicontinuity and \eqref{eq:Q-def} we now obtain 
\begin{align*}
  &\liminf_{h \to 0} \frac{1}{h} \sum_{a=1}^N \int_{Q_a} W(\nabla w^h) + {\cal H}^1 (J_{w^h} \cap Q_a) \\ 
  &~~\ge \frac{1}{2} \int_{(0,L) \setminus J_{\delta}} \int_{-\frac{1}{2} + \bar{c} \rho}^{\frac{1}{2} - \bar{c} \rho} {\cal Q} \big( G^{(2)}_0 \big) 
    + m \lambda - C \rho \\ 
  &~~\ge \frac{\alpha}{2} \int_{(0,L) \setminus J_{\delta}} \int_{-\frac{1}{2} + \bar{c} \rho}^{\frac{1}{2} - \bar{c} \rho} |g^{(2)}_{11}|^2 
    + m \lambda - C \rho, 
\end{align*}
where $g^{(2)}_{11}$ is the upper left entry in $G^{(2)}_0$. 

Fix $1 < p < 2$ and note that, upon passing to further subsequences, Lemma \ref{lemma:G-ests} also yields $G^{(1)}_0 = (g^{(1)}_{ij})$ and $G_0 = (g_{ij})$ in $L^p \big( ((0, L) \setminus J_{\delta}) \times (-\frac{1}{2} + \bar{c} \rho, \frac{1}{2} - \bar{c} \rho); \R^{2 \times 2}_{\rm sym} \big)$ such that $G^{(1)} \weakly G^{(1)}_0$ and $G \weakly G_0$ in $L^p$, where $G^{(2)}_0 = G_0 - G^{(1)}_0$ and $\| G^{(1)}_0 \|_{L^p}^p \le C \rho^{\frac{p}{2}}$. Defining the convex function $f_{\delta} : \R \to [0,\infty)$ by 
$$ f_{\delta}(t) 
   = \begin{cases} 
        t^2 & \mbox{for } |t| \le \delta^{-1}, \\ 
        2 \delta^{-1} |t| - \delta^{-2} & \mbox{for } |t| \ge \delta^{-1}, 
     \end{cases} $$ 
which obviously satisfies $(t+s)^2 \ge f_{\delta}(t+s) \ge f_{\delta}(t) - 2 \delta^{-1} |s|$ for all $t,s$, we thus obtain 
\begin{align*}
  &\liminf_{h \to 0} \frac{1}{h} \sum_{a=1}^N \int_{Q_a} W(\nabla w^h) + {\cal H}^1 (J_{w^h} \cap Q_a) \\ 
  &~~ \ge \frac{\alpha}{2} \int_{(0,L) \setminus J_{\delta}} \int_{-\frac{1}{2} + \bar{c} \rho}^{\frac{1}{2} - \bar{c} \rho} 
      f_{\delta} (g_{11}) - 2 \delta^{-1} |g^{(1)}_{11}| \, dx 
      + m \lambda - C \rho \\ 
  &~~ \ge \frac{\alpha}{2} \int_{(0,L) \setminus J_{\delta}} \int_{-\frac{1}{2} + \bar{c} \rho}^{\frac{1}{2} - \bar{c} \rho} 
      f_{\delta} (g_{11})  
      + m \lambda - C \delta^{-1} \rho^{\frac{1}{2}}. 
\end{align*}

To further identify $g_{11}$ we note that for any $U \subset\subset ((0, L) \setminus J_{\delta}) \times (-\frac{1}{2} + \bar{c} \rho, \frac{1}{2} - \bar{c} \rho)$ and $z \in \R$ with $|z| < \dist \big( \partial U, ((0, L) \setminus J_{\delta}) \times (-\frac{1}{2} + \bar{c} \rho, \frac{1}{2} - \bar{c} \rho) \big)$ by definition of $G$ and Lemma \ref{lemma:delta-z-tw} we have 
\begin{align*}
  \Delta^{(z)} g_{11} 
  = \mbox{w-}\lim \big( h^{-1} \Delta^{(z)} \tilde{R}^T \nabla_h y^h \big)_{11} 
  = \partial_{11} y \cdot (\partial_1 y)^{\perp} 
\end{align*}
in $L^p(U)$ so that 
$$ g_{11}(x) = g_{11}(x_1, 0) + x_2 \partial_{11} y(x_1) \cdot (\partial_1 y)^{\perp}(x_1) $$
on $((0, L) \setminus J_{\delta}) \times (-\frac{1}{2} + \bar{c} \rho, \frac{1}{2} - \bar{c} \rho)$. 

Using that $f_{\delta}(t + s) \ge f_{\delta}(t) + f_{\delta}'(t) s$ for all $t,s$ and that $f_{\delta}'$ is odd we now get 
\begin{align*}
  &\liminf_{h \to 0} \frac{1}{h} \sum_{a=1}^N \int_{Q_a} W(\nabla w^h) + {\cal H}^1 (J_{w^h} \cap Q_a) \\ 
  &~~\ge \frac{\alpha}{2} \int_{(0,L) \setminus J_{\delta}} \int_{-\frac{1}{2} + \bar{c} \rho}^{\frac{1}{2} - \bar{c} \rho} 
    f_{\delta} \big( x_2 \, y'' \cdot (y')^{\perp} \big) \\ 
  &\qquad\qquad\qquad\qquad\qquad + f_{\delta}'\big( x_2 \, y'' \cdot (y')^{\perp} \big) g_{11}(x_1, 0)  \, dx 
    + m \lambda - C \delta^{-1} \rho^{\frac{1}{2}} \\ 
   &~~= \frac{\alpha}{2} \int_{(0,L) \setminus J_{\delta}} \int_{-\frac{1}{2} + \bar{c} \rho}^{\frac{1}{2} - \bar{c} \rho} 
    f_{\delta} \big( x_2 \, y'' \cdot (y')^{\perp} \big) \, dx 
    + m \lambda - C \delta^{-1} \rho^{\frac{1}{2}}. 
\end{align*}
Now sending $\rho \to 0$, $\lambda \to 1$ and $\delta \to 0$ and carrying out the $x_2$ integration we get 
\begin{align*}
  &\liminf_{h \to 0} \frac{1}{h} \sum_{a=1}^N \int_{Q_a} W(\nabla w^h) + {\cal H}^1 (J_{w^h} \cap Q_a) \\ 
  &~~\ge \frac{\alpha}{2} \int_0^L \int_{-\frac{1}{2}}^{\frac{1}{2}}  | x_2 \, y'' \cdot (y')^{\perp}|^2 + {\cal H}^1(J_{y} \cup J_{y'}) \\ 
  &~~= \frac{\alpha}{24} \int_0^L |y'' \cdot (y')^{\perp}|^2 + {\cal H}^1(J_{y} \cup J_{y'}). 
\end{align*}
by monotone convergence and the fact that $m \ge {\cal H}^1(J_{y} \cup J_{y'})$. 

Finally, for each $k = 0, \ldots, n-1$ we repeat the above analysis with the shifted rectangles $Q^{(k)}_{1} = Q_{1} + \frac{k}{n}, \ldots, Q^{(k)}_{N} = Q_{N} + \frac{k}{n}$ to obtain 
\begin{align*}
  &\liminf_{h \to 0} \frac{1}{h} \sum_{a=1}^N \int_{Q^{(k)}_a} W(\nabla w^h) + {\cal H}^1 (J_{w^h} \cap Q^{(k)}_a) \\ 
  &~~\ge \frac{\alpha}{24} \int_0^L |y'' \cdot (y')^{\perp}|^2 + {\cal H}^1(J_{y} \cup J_{y'}). 
\end{align*}
for any such $k$. Then summing over $k$ yields 
\begin{align*}
  &(n+1) \liminf_{h \to 0} \frac{1}{h} \int_{h^{-1} \Omega_h} W(\nabla w^h) + {\cal H}^1 (J_{w^h}) \\ 
  &~~\ge \liminf_{h \to 0} \sum_{k = 0}^{n-1} \bigg( \frac{1}{h} \sum_{a=1}^{N} \int_{Q^{(k)}_a} W(\nabla w^h) + {\cal H}^1 (J_{w^h} \cap Q^{(k)}_a) \bigg) \\ 
  &~~\ge n \bigg( \frac{\alpha}{24} \int_0^L |y'' \cdot (y')^{\perp}|^2 + {\cal H}^1(J_{y} \cup J_{y'}) \bigg). 
\end{align*}
Dividing by $n$ and sending $n \to \infty$ we indeed get 
\begin{align*}
  \liminf_{h \to 0} \frac{1}{h} \int_{h^{-1} \Omega_h} W(\nabla w^h) + {\cal H}^1 (J_{w^h})
  &\ge \frac{\alpha}{24} \int_0^L |y'' \cdot (y')^{\perp}|^2 + {\cal H}^1(J_{y} \cup J_{y'}). 
\end{align*}
\end{proof}

\subsection{Recovery sequences, body forces and boundary values}

We will now complete the proof of Theorem \ref{theo:Gamma-convergence} by establishing the existence of recovery sequences and also prove Corollaries \ref{cor:Gamma-compactness-forces} and \ref{cor:Gamma-compactness-forces-bv}. Thanks to our limiting functional being one-dimensional, the arguments are rather straightforward. We include them for the sake of completeness.

\begin{proof}[Proof of Theorem \ref{theo:Gamma-convergence}(ii).] 
Recall that, by assumption, $M \gg \max\{1, L\}$. Without loss of generality we assume $y \in {\cal A}$ with, say, $J_{y} \cup J_{y'} = \{ t_1, \ldots, t_m \}$. Also set $t_0 = 0$ and $t_{m+1} = L$. Suppose first that $y|_{(t_{i-1}, t_i)} \in C^{\infty}([t_{i-1}, t_i])$ for all $i \in \{1, \ldots, m+1\}$ and that $\| y \|_{L^{\infty}(0,L)} < M$. 

Since ${\cal Q}$ is positive definite on symmetric matrices, for each $\lambda \in \R$ there is a unique vector $\gamma(\lambda) \in \R^2$ such that 
$$ {\cal Q} \big( \lambda \mathbf{e}_1 \mid \gamma(\lambda) \big) 
   = \min_{\gamma \in \R^2} {\cal Q} \big( \lambda \mathbf{e}_1 \mid \gamma \big) 
   = \alpha \lambda^2, $$ 
which moreover depends linearly on $\lambda$, cf.\ \eqref{eq:Q-def}. Set $R = \big( y' \mid {y'}^{\perp} \big)$, $\kappa = y'' \cdot {y'}^{\perp}$, $d = R \gamma(-\kappa)$ and define $y^h \in SBV(\Omega; \R^2)$ by 
$$ y^h(x) 
   = y(x_1) + h x_2 {y'}^{\perp}(x_1) + \frac{h^2 x_2^2}{2} d(x_1) $$ 
so that $J_{y^h} = \{ t_1, \ldots, t_m \} \times (\frac{1}{2}, \frac{1}{2})$ for small $h$. We will now show that $(y^h)$ is indeed a recovery sequence for $y$. 

First note that since $|d| = |\gamma(- \kappa)| \le C$, for $h$ small we have 
\begin{align}\label{eq:yh-M-est}
  |y^h| 
  \le \| y \|_{L^{\infty}} + h + C h^2 
  \le M 
  \quad\mbox{and}\quad
  y^h \to y \mbox{ in } L^1. 
\end{align} 
With $\nabla_h y^h = \big( y' \mid {y'}^{\perp} \big) + h x_2 \big( {y''}^{\perp} \mid d \big) + \frac{h^2 x_2^2}{2} \big( d' \mid 0 \big)$ we then compute, using that ${y'}^{\perp} \cdot {y''}^{\perp} = \frac{1}{2} {(|y'|^2)'} = 0$, 
\begin{align*}
   R^T \nabla_h y^h 
%  &= \Id + h x_2 \big( \tilde{y}' \mid (\tilde{y}')^{\perp} \big)^T \big( (\tilde{y}'')^{\perp} \mid d \big) + h^2 \big( \tilde{y}' \mid (\tilde{y}')^{\perp} \big)^T \frac{x_2^2}{2} \big( d' \mid 0 \big) \\ 
%  &= \Id + h x_2 \begin{pmatrix} - \kappa & \tilde{y}' \cdot d \\ 0 & (\tilde{y}')^{\perp} \cdot d \end{pmatrix} + \frac{h^2 x_2^2}{2} \begin{pmatrix} \tilde{y}' \cdot d' & 0 \\ (\tilde{y}')^{\perp} \cdot d' & 0 \end{pmatrix} \\ 
  &= \Id + h x_2 \big( - \kappa \mathbf{e}_1 \mid \gamma(-\kappa) \big) + \frac{h^2 x_2^2}{2} \big( R^T d' \mid 0 \big), 
\end{align*}
where $|x_2 \big( - \kappa^h \mathbf{e}_1 \mid \gamma(-\kappa) \big)| + | \frac{x_2^2}{2} \big( R^T d' \mid 0 \big)| \le C$. In particular, also $|\nabla_h y^h| \le \sqrt{2} + C h\le M$ and thus $y^h \in {\cal A}^h$ by \eqref{eq:yh-M-est}. 
 
Taylor expanding the frame indifferent $W$ as in \eqref{eq:Taylor-exp} now yields 
\begin{align*}
  W( \nabla_h y^h ) 
  &= \frac{h^2 x_2^2}{2} {\cal Q} \big( \big( - \kappa \mathbf{e}_1 \mid \gamma(-\kappa) \big) \big) + B^h 
\end{align*}
with $|B^h(x)| \le C h^3$. It follows that 
\begin{align*}
  &\lim_{h \to 0} h^{-2} \int_{\Omega} W(\nabla_h y^h) \, dx 
    + \beta \int_{\Omega \cap J_{y^h}} |(\nu_1(y^h), h^{-1} \nu_2(y^h))| \, d{\cal H}^1 \\ 
  &~~= \lim_{h \to 0} \frac{1}{2} \int_0^L \int_{-\frac{1}{2}}^{\frac{1}{2}}  x_2^2 {\cal Q} \big( \big(- \kappa \mathbf{e}_1 \mid \gamma(-\kappa) \big) \big) + \beta m \\ 
  &~~= \lim_{h \to 0} \frac{\alpha}{24} \int_0^L  |\kappa(x_1)|^2 \, dx_1 + \beta \# (J_{y} \cup J_{y'}) 
   = I^0(y).  
\end{align*}

For general $y \in {\cal A}$ with, say, $J_{y} \cup J_{y'} = \{ t_1, \ldots, t_m \}$, $t_0 = 0$ and $t_{m+1} = L$, it now suffices to observe that there is a sequence $y_k \in {\cal A}$ with $J_{y_k} \cup J_{y_k'} = \{ t_1, \ldots, t_m \}$ such that $y_k|_{(t_{i-1}, t_i)} \in C^{\infty}([t_{i-1}, t_i])$ and $y_k|_{(t_{i-1}, t_i)} \to y|_{(t_{i-1}, t_i)}$ in $W^{2,2}(t_{i-1}, t_i)$ for every $i \in \{1, \ldots, m+1\}$ and in addition $\| y_k \|_{L^{\infty}(0,L)} < M$. Since then $y_k'' \cdot y_k'^{\perp} \to y'' \cdot y'^{\perp}$ in $L^2$, a recovery sequence for $y$ can be obtained by choosing a suitable diagonal sequence. 
\end{proof}

\begin{proof}[Proof of Corollary \ref{cor:Gamma-compactness-forces}.] 
The compactness statement is immediate from Theorem \ref{theo:compactness}, Remark \ref{rem:main-res}.\ref{rem:main-res-better-top} and the estimate 
$$ |J^h(y) - I^h(y)| 
   \le M \| h^{-2} f^h \|_{L^1} 
   \le C $$ 
for all $y \in {\cal A}^h$. 

The Gamma-convergence statement follows by noting that if $J^h(y^h) \le C$ and $y^h \to y$ in $L^1$, then in fact $y^h \to y$ boundedly in measure due to the uniform $L^{\infty}$-bound on $y^h$ and so 
$$ h^{-2} \int_{\Omega} y^h(x) \cdot f^h(x) \, dx 
   \to \int_{\Omega} y(x) \cdot f(x) \, dx 
   = \int_0^L \bar{y}(x_1) \cdot \bar{f}(x_1) \, dx_1, $$ 
where $y \in {\cal A}$ with $y(x) = \bar{y}(x_1)$ for a.e.\ $x$. 
\end{proof}

\begin{proof}[Proof of Corollary \ref{cor:Gamma-compactness-forces-bv}.] 
The compactness statement and, as a consequence, the $\liminf$ inequality are straightforward from Corollary \ref{cor:Gamma-compactness-forces}. It only remains to show that, for $y \in  {\cal A_{\rm bv}}$ a recovery sequence can be chosen which not only lies in ${\cal A}^h$ but also in ${\cal A}^h_{\rm bv}$. If $y$ is piecewise smooth, then the recovery sequence $y^h$ constructed in the proof of Theorem \ref{theo:Gamma-convergence}(ii) does indeed attain prescribed boundary values on $((- \eta, 0) \cup (L, L + \eta)) \times (\frac{1}{2}, \frac{1}{2})$. It now suffices to observe that for general $y \in {\cal A}_{\rm bv}$ with, say, $J_{y} \cup J_{y'} = \{ t_1, \ldots, t_m \}$, $t_0 = 0$ and $t_{m+1} = L$ the approximating $y_k \in {\cal A}$ with $J_{y_k} \cup J_{y_k'} = \{ t_1, \ldots, t_m \}$, $y_k|_{(t_{i-1}, t_i)} \in C^{\infty}([t_{i-1}, t_i])$ and $y_k|_{(t_{i-1}, t_i)} \to y|_{(t_{i-1}, t_i)}$ in $W^{2,2}(t_{i-1}, t_i)$ for every $i \in \{1, \ldots, m+1\}$ and $\| y_k \|_{L^{\infty}(0,L)} < M$ can be chosen in such a way that $y_k = y$ on $((- \eta, 0) \cup (L, L + \eta)$ for all $k$. 
\end{proof}

%--------------------------------------------------------------------------
%--------------------------------------------------------------------------
\section*{Acknowledgment}
%--------------------------------------------------------------------------

I am grateful to Manuel Friedrich for our interesting discussions and his helpful advice related to the quantitative piecewise geometric rigidity result stated in Theorem \ref{theo:quant-rig}. 

%--------------------------------------------------------------------------

%%%%%%%%%%%%%%%%%%%%%%%%%%%%%%%%%%%%%%%%%%%%%%%%%%%%%%%%%%%%%%%%%%%%%%%%%%
%%%%%%%%%%%%%%%%%%%%%%%%%%%%%%%%%%%%%%%%%%%%%%%%%%%%%%%%%%%%%%%%%%%%%%%%%%
 \typeout{References}

\end{document}